\documentclass[ reqno]{amsart}
\usepackage{amsfonts}
\usepackage{}
\usepackage{amssymb}
\usepackage{bbm}
\usepackage{amsmath}
\usepackage{amsfonts,amsmath, amsthm, amssymb, amscd, amsfonts, mathrsfs}
\usepackage[all]{xy}
\usepackage{txfonts}
\usepackage{hyperref}
\usepackage[T1]{fontenc}

\usepackage{lmodern}
\usepackage{fourier}
\hfuzz=\maxdimen
\tolerance=10000
\hbadness=10000
\numberwithin{equation}{section}
\newtheorem*{theorem*}{Theorem A}
\newtheorem*{remark*}{Remark }
\newtheorem{lemma}{Lemma}[section]
\newtheorem{proposition}[lemma]{Proposition}
\newtheorem{remark}[lemma]{Remark}

\newtheorem{theorem}[lemma]{Theorem}

\newtheorem{corollary}[lemma]{Corollary}

\oddsidemargin=-1cm
\evensidemargin=-1cm
\baselineskip 20pt \textwidth 18cm \sloppy \theoremstyle{plain}

\newcommand{\Hom}{\operatorname{Hom}}

\newcommand{\Span}{\operatorname{Span}}

\newcommand{\Res}{\operatorname{Res}}

\newcommand{\Ha}{\operatorname{H}}
\newcommand{\EU}{\operatorname{EU}}

\newcommand{\Ca}{\mathbb C}

\newcommand{\Gal}{\operatorname{Gal}}
\newcommand{\GL}{\operatorname{GL}}
\newcommand{\GU}{\operatorname{GU}}
\newcommand{\Oa}{\operatorname{O}}
\newcommand{\GSp}{\operatorname{GSp}}

\newcommand{\SL}{\operatorname{SL}}
\newcommand{\SO}{\operatorname{SO}}
\newcommand{\Sp}{\operatorname{Sp}}

\newcommand{\Ua}{\operatorname{U}}
\newcommand{\M}{\operatorname{M}}

\newcommand{\GO}{\operatorname{GO}}

\newcommand{\diag}{\operatorname{diag}}

\newcommand{\id}{\operatorname{Id}}
\newcommand{\nnn}{\operatorname{N}}

\newcommand{\tr}{\operatorname{Tr}}
\newcommand{\Tr}{\operatorname{Tr}}
\newcommand{\Trd}{\operatorname{Trd}}
\newcommand{\Nrd}{\operatorname{Nrd}}

\begin{document}
\title{Splitting metaplectic cover groups}
%\date{\today}

\author{Chun-Hui Wang}
\address{School of Mathematics and Statistics\\Wuhan University \\Wuhan, 430072,
P.R. CHINA}
\email{cwang2014@whu.edu.cn}
\keywords{ Metaplectic group, Theta correspondence, Weil representation}
\thanks{Research  supported by  NSFC grant \# 11501420.}
%\subjclass[2010]{11F27 (Primary), 20G25 (Secondary).}
\maketitle

\setcounter{tocdepth}{1}
\setcounter{secnumdepth}{3}

\begin{abstract}
If $(G_1, G_2)$ is a dual reductive pair of type I in $\Sp(W)$, it is known that the degree $8$ metaplectic cover of $\Sp(W)$ splits over $G_1G_2$, with one obvious exception. In this paper we replace $G_1G_2$ by a larger subgroup obtained via similitude groups, and show that the degree $8$ metaplectic cover  splits, with the same obvious exception.
\end{abstract}
%\tableofcontents

\section{Statement of the results}\label{statementofresult}
Let  $F$ be  a non-archimedean local field of  residue characteristic different from $2$, and let $D$ be a division algebra over $F$ with an involution $\tau$ such that  $F$ consists of all $\tau$-fixed points of $D$.  We define $(W, \langle, \rangle)$ as a symplectic space over $F$ of dimension $2n$ with a tensor product decomposition
$$W=W_1 \otimes_D W_2, \qquad \langle, \rangle=\Trd_{D/F}\big( \langle, \rangle_1 \otimes \tau(\langle, \rangle_2)\big)$$
where $(W_1, \langle, \rangle_1)$ is a right $\epsilon_1$-hermitian space over $D$ and $ (W_2, \langle, \rangle_2)$ is a left $\epsilon_2$-hermitian space over $D$ with  $\epsilon_1 \epsilon_2=-1$. We  let $\Ua(W_i)$ be the group of isometries of $(W_i, \langle, \rangle_i)$, and $\GU(W_i)$  the group of similitudes  of $(W_i, \langle, \rangle_i)$. In \cite[p.15]{MVW}, it was shown that  except the case $\epsilon_1=1, \epsilon_2=-1$, and $W_2=$ the  quaternion algebra over $F$,  the pair $\big(\Ua(W_1), \Ua(W_2)\big)$ is the so-called irreducible \emph{dual reductive pair}  of type I in the sense of Howe.

 Denote by $\mu_8$ the cyclic group of the roots of unity in $\Ca$ of order $8$. To a non-trivial element $[c_{Rao}]$ of order $2$ in the measurable cohomology group $\Ha^2(\Sp(W), \mu_8)$ is associated a central extension
   $$0 \longrightarrow \mu_8 \longrightarrow \overline{\Sp(W)} \longrightarrow \Sp(W) \longrightarrow 1$$
   of $\Sp(W)$ by $\mu_8$. Let     $\overline{\Ua(W_i)}$ be the degree $8$ metaplectic cover  of $\Ua(W_i)$ induced  by $\overline{\Sp(W)}$. Then  the following  result about the above irreducible dual reductive pair was drawn from {\cite[Chapitre 3, p. 51 ]{MVW}}(written by Vign\'eras).
\begin{theorem}\label{scindagedugroupeR0}
The exact sequence   $0 \longrightarrow \mu_8 \longrightarrow \overline{\Ua(W_1)} \longrightarrow \Ua(W_1) \longrightarrow 1 $ splits, except for $W_1$ being symplectic and $W_2$ being orthogonal of odd dimension.
\end{theorem}
We remark that  Kudla discussed   the explicit splitting group extensions of  dual reductive pairs in \cite{Kud3}. His results  play a significant role in the study of the classical  theta correspondences. In this paper we shall generalize the above result to a larger subgroup of $\Sp(W)$.  We define
$$\Gamma:= \left\{ (g_1, g_2)\mid g_1 \in \GU(W_1), g_2 \in \GU(W_2) \textrm{ such that } \lambda_1(g_1)\lambda_2(g_2)=1\right\}, $$
 where  $\lambda_1$, $\lambda_2$  are the similitude characters from $\GU(W_1)$ , $\GU(W_2)$  to $F^{\times}$ respectively. Accordingly there exists a canonical map
 $\iota : \Gamma \longrightarrow \Sp(W)$.  Now let  $\overline{\Gamma}$ be the degree $8$ metaplectic cover  of   $\Gamma$  induced by $\overline{\Sp(W)}$. By using \cite{MVW}'s approach we derive  the   following coherent result analogy of Theorem \ref{scindagedugroupeR0}.
\begin{theorem*}\label{scindagedugroupeRmain}
 The exact sequence  $1 \longrightarrow \mu_{8} \longrightarrow \overline{\Gamma} \longrightarrow \iota (\Gamma)\longrightarrow 0$
 splits, except  when the irreducible  dual  reductive pair is a symplectic-orthogonal type and the orthogonal vector space over $F$   is of odd dimension.
\end{theorem*}
\begin{remark*}
The major regular part of this result is known to the specialist. However here we deal  it more  systematically and  use  much   cohomology theory.  
 \begin{itemize}
 \item[(1)] When $D=F$, $W_1$ is a symplectic vector space over $F$, and $W_2$ is an   orthogonal vector space over $F$ of even dimension,  the result is compared with  Robert's result \cite[Proposition 3.3.]{Ro}.  
 \item[(2)]  When $D=$ the quaternion algebra over $F$, and  $W_1$ is a  Hermitian vector space over $D$ of even dimension, and $W_2$ is a skew-Hermitian vector space over $D$,  the  result is due to  Gan, see \cite[Sections 2- 3]{Gan}.  

\end{itemize}
\end{remark*} 
 The explicit behavior  of  the cohomology  class  $[c_{Rao}]$ in $\Ha^2(\Sp(W), \mu_8)$ has been  investigated   by Rao  \cite{Rao}, and by Perrin \cite{Per} (see also the comprehensive note \cite{Kud2} written by Kudla). To control the restriction of $[c_{Rao}]$ to $\Gamma$, we use a long exact sequence of cohomology groups obtained by  inflation-restriction from the exact sequence of groups $1 \longrightarrow \Ua(W_1) \times \Ua(W_2) \longrightarrow \Gamma \longrightarrow \Gamma /{(\Ua(W_1) \times \Ua(W_2) )} \longrightarrow 1$.    That exact sequence of cohomology is stated in section  \ref{TheHochschildSerreSpectralSequence}, and section \ref{reductions} deduces a criterion for the splitting of $\overline{\Gamma}$. The criterion involves several conditions, which are verified in the next sections  \ref{TheproofofthemaintheoremI} to \ref{TheproofofthemaintheoremVI}. The verification requires us to use results about the commutator subgroups  of  unitary groups over $D$. So in section \ref{notations} we  recall the classification of the skew hermitian spaces over a $p$-adic division algebra $\mathbb{H}$ (\emph{cf}. \cite{Sa}, \cite{Tsuk})  and the fine structure of the norm one subgroup of   $\mathbb{H}^{\times}$ (\emph{cf}. \cite{Ri}).

\section{Notation and preliminaries }\label{notations}
\subsection{Notation and conventions}\label{notationss}
We  fix the following notations and assumptions for the  whole paper:
\begin{itemize}
\item $F$: a non-archimedean local field of  residue characteristic different from $2$;
\item $E$: a quadratic field extension of $F$;
\item $\mathbb{H}$: the unique quaternion algebra over $F$, up to isometry;
\item $\Nrd, \Trd$: the reduced norm,  resp. trace of $\mathbb{H}$;
\item $\mathbb{H}^0$: the subspace  of elements of pure quaternion of $\mathbb{H}$;
\item $\mathfrak{D}$: the ring of integers of $\mathbb{H}$ consisting of the elements $\mathbbm{d} \in \mathbb{H}$ such that $|\Nrd(\mathbbm{d})|_F \leq 1$;
\item $\mathfrak{P}$: the maximal ideal of $\mathfrak{D}$;
\item $\Ua_{\mathbb{H}}=\left\{ \mathbbm{d} \in \mathbb{H} \mid |\Nrd(\mathbbm{d})|_F=1\right\}$;
\item $\mathbb{SL}_1(\mathbb{H})=\left\{ \mathbbm{d}\in \mathbb{H}^{\times}\mid \Nrd(\mathbbm{d})=1\right\}$;
\item $k_{\mathbb{H}}$: the residue field of $\mathbb{H}$;
\item $\mathbbm{e}_{-1}$: an  element  of $\mathbb{H}^{\times}$ with reduced norm $-1$;
\item $\{1, \xi, \omega, \xi\omega\}$ is a fixed standard basis of $\mathbb{H}$ such that $\xi\omega=-\omega\xi$, and $F(\xi)/F$ is an unramified extension of degree $2$;
\item The ring of integers of a local field  $K$ will be denoted by  $\mathfrak{O}_K$, its  unique maximal ideal by  $\mathfrak{p}_K$,  its residue field by $k_K$, its group of units by $\Ua_K$;
\item $\mathbb{SL}_1(K)=\left\{ d\in K^{\times}\mid \nnn_{K/F}(d)=1\right\}$, for a finite field extension $K/F$;
\item $(D, \tau)$:  a division algebra over $F$ with an involution $\tau$ such that  $F$ consists of all $\tau$-fixed points of $D$; it has  one of the following forms: (1) $D=F, \tau=\id$; (2) $D=E$,  $\tau$=the canonical conjugation; (3) $D= \mathbb{H}$, $\tau=$ the canonical involution;
\item $(H, \langle, \rangle)$: a right (resp. left) $\epsilon$-hermitian  hyperbolic plane  over $D$, defined as $\langle (d_1,
d_1'), (d_2, d_2')\rangle= \tau(d_1) d_2' +
\varepsilon \tau(d_1') d_2$,
 \big(resp.  $\langle (d_1, d_1'),
(d_2, d_2')\rangle= d_1 \tau(d_2') + \varepsilon d_1' \tau(d_2)\big)$, for $d_1, d_2, d_1', d_2' \in D$;
\item $\big(\mathbb{H}[\mathbbm{i}],  \langle, \rangle\big)$: a right (resp. left) skew hermitian vector space over $\mathbb{H}$ of dimension $1$, defined as $\langle d_1,
d_1'\rangle= \tau(d_1) \mathbbm{i}d_1'$ (resp.  $\langle d_1, d_1'\rangle= d_1 \mathbbm{i}\tau(d_1')$), for $d_1,  d_1' \in \mathbb{H}$, where $0 \neq \mathbbm{i} \in \mathbb{H}^0$;
\item $(W, \langle,\rangle)$, $(W_i, \langle, \rangle_i)$, $\epsilon_i$, $\Ua(W_i)$, $\Gamma$, and $\overline{\Ua}(W_i)$, $\overline{\Gamma}$, $\mu_8$, etc.: introduced above;
\item  Without confusion, sometimes we write  a matrix in its block form,     and write for instance $\GL_3(M_2(F))$ for $\GL_6(F)$.
\end{itemize}
%\subsection{Acknowledgements}

\subsection{Preliminaries on unitary groups}\label{thehyperbolicunitarygroups}
In this subsection, we reviewed  some results of unitary groups and their commutator subgroups which are indispensable. Our main references are the books written by  Scharlau \cite{Scha} and Hahn-O'Meara \cite{HOM}. We also do benefit from the articles \cite{Ri}, \cite{Tsuk}. Let us first rephrase  one result from \cite[p.7]{MVW}  concerning about the classification of  anisotropic  $\epsilon$-hermitian spaces over $D$.
\begin{theorem}\label{vigneraslemma}
  Up to isometry,
\begin{itemize}
\item[-]  an anisotropic quadratic  vector space over $F$ has one of  the following forms:
\begin{itemize}
 \item[(i)] $F[a]$,  for $ a \in F^{\times} \textrm{ modulo } (F^{\times})^2$, with the canonical form  $x \longmapsto ax^2$,  $x\in F$;
 \item[(ii)]  $F_1[a]$, for any  quadratic field extension $F_1$ of $F$,   $ a \in F^{\times} \textrm{ modulo } \nnn_{F_1/F}(F_1^{\times})$ with the form $x \longmapsto a\nnn_{F_1/F}(x)$,  $x\in F_1$;
 \item[(iii)] $\mathbb{H}^0[a]$, for $a \in F^{\times} \textrm{ modulo } (F^{\times})^2$,  with the form $x \longmapsto \tau(\mathbbm{x})a \mathbbm{x}$, $\mathbbm{x} \in \mathbb{H}^0$;
  \item[(iv)] $\mathbb{H}$, with the form $\mathbbm{x} \longmapsto \Nrd(\mathbbm{x})$, $\mathbbm{x} \in \mathbb{H}$.
  \end{itemize}
\item[-] an anisotropic hermitian  vector space over $E$ has one of the following forms:
\begin{itemize}
 \item[(i)] $E[a]$,  for $ a \in F^{\times} \textrm{ modulo } \nnn_{E/F}(E^{\times})$, with the form $(x, y)\longmapsto a\tau(x)y$,  $x, y\in E$;
 \item[(ii)] $\mathbb{H}$,  with the form $(\mathbbm{x}, \mathbbm{y}) \longmapsto \Tr_{\mathbb{H}/E}\big( \tau(\mathbbm{x})\mathbbm{y}$\big), $\mathbbm{x}, \mathbbm{y} \in \mathbb{H}$.\footnote{Actually, we use the fact that a (separable) quadratic field extension $E$ of $F$ can be embedded  in $\mathbb{H}$ (\emph{cf}. \cite[p. 326, Proposition]{BushH} ).  The precise trace map is described in Section \ref{Theanisotropicunitary groupI}.}
  \end{itemize}
\item[-] an anisotropic right hermitian  vector space over $\mathbb{H}$ has   the following form: $\mathbb{H}$, with the form $(\mathbbm{x}, \mathbbm{y}) \longmapsto \tau(\mathbbm{x})\mathbbm{y}$, $\mathbbm{x}, \mathbbm{y} \in \mathbb{H}$.
\end{itemize}
\end{theorem}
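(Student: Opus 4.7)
The plan is to reduce each of the three cases to the classification of anisotropic forms and then to match each surviving dimension with the stated explicit model. Throughout I use Witt's decomposition theorem for non-degenerate $\epsilon$-hermitian forms over $D$ (so that the anisotropic part is well defined up to isometry), together with the known structure of $F^{\times}$ modulo squares and modulo norms when $F$ has odd residual characteristic.

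\textbf{Quadratic case ($D=F$, $\epsilon=1$).} I would invoke the standard fact that $F^{\times}/(F^{\times})^{2}$ has order $4$ and that there is a unique quaternion division algebra $\mathbb{H}$ over $F$. A direct computation with the discriminant and the Hasse invariant shows that every anisotropic quadratic form has dimension at most $4$. In dimension $1$ the form is $\langle a\rangle$ with $a\in F^{\times}/(F^{\times})^{2}$. In dimension $2$, $\langle a,b\rangle$ is anisotropic iff $-ab\notin (F^{\times})^{2}$; writing $F_{1}=F(\sqrt{-ab})$ identifies the form with $a\cdot \nnn_{F_{1}/F}$, and $a$ is determined only modulo $\nnn_{F_{1}/F}(F_{1}^{\times})$. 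In dimension $4$ the unique anisotropic model is the reduced norm on $\mathbb{H}$; in dimension $3$ the unique anisotropic form is obtained as its restriction to $\mathbb{H}^{0}$, scaled by some $a\in F^{\times}/(F^{\times})^{2}$.

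\textbf{Hermitian case ($D=E$, $\epsilon=1$).} Non-degenerate hermitian forms over $E$ are classified by rank and discriminant in $F^{\times}/\nnn_{E/F}(E^{\times})$, a group of order $2$ by local class field theory. This forces anisotropic hermitian spaces over $E$ to have dimension at most $2$. Dimension $1$ gives $E(a)$ with $a\in F^{\times}/\nnn_{E/F}(E^{\times})$. In dimension $2$, uniqueness of the anisotropic form (once rank and discriminant are fixed) reduces the matter to producing one concrete model: I would fix any embedding $E\hookrightarrow \mathbb{H}$ as a maximal subfield and equip $\mathbb{H}$ with $(\mathbbm{x},\mathbbm{y})\mapsto \Tr_{\mathbb{H}/E}(\tau(\mathbbm{x})\mathbbm{y})$, then verify directly that this form is hermitian and anisotropic, the diagonal values $\Tr_{\mathbb{H}/E}(\tau(\mathbbm{x})\mathbbm{x})=\Nrd(\mathbbm{x})$ being nonzero for $\mathbbm{x}\neq 0$.

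\textbf{Quaternionic case, and the main obstacle.} For right hermitian forms over $(\mathbb{H},\tau)$ with $\epsilon=1$ I would invoke Jacobson's theorem: a hermitian form over $\mathbb{H}$ is anisotropic iff the quadratic trace form it induces on the underlying $F$-space is anisotropic. Since a rank-$n$ hermitian form over $\mathbb{H}$ gives a $4n$-dimensional quadratic form over $F$, the dimension bound from the quadratic case forces $n=1$. A rank-one form $(\mathbbm{x},\mathbbm{y})\mapsto \tau(\mathbbm{x})a\mathbbm{y}$ with $a\in F^{\times}$ is reduced to $a=1$ since $\Nrd\colon \mathbb{H}^{\times}\to F^{\times}$ is surjective, leaving only the single model $(\mathbbm{x},\mathbbm{y})\mapsto \tau(\mathbbm{x})\mathbbm{y}$. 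The main obstacle in all three cases is precisely the dimension bound -- equivalently, that the listed discrete invariants really do classify anisotropic forms -- together with the concrete identification of the $2$-dimensional anisotropic hermitian space over $E$ with the trace model on $\mathbb{H}$; both points are classical but rest on a careful invariant computation that I would extract from Scharlau \cite{Scha}.
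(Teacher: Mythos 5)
The paper gives no proof of this statement at all: it is introduced with ``Let us rephrase one result from \cite[Page 7]{MVW}'' and the classification is simply quoted from the reference. Your proposal, by contrast, actually supplies the standard argument, and it is correct.

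Your route is the classical one: use Witt decomposition to reduce to anisotropic kernels; classify anisotropic quadratic forms over the local field $F$ of odd residual characteristic by dimension, discriminant and Hasse invariant (giving the dimension bound $\le 4$ and one form per similarity class in dimensions $1,2,3,4$); classify hermitian forms over $E$ by rank and discriminant in $F^{\times}/\nnn_{E/F}(E^{\times})$ (giving anisotropic dimension $\le 2$); and treat the quaternionic hermitian case by Jacobson's trace-form transfer, which pushes the quadratic dimension bound down to rank $1$, with the scalar $a$ then absorbable by the surjectivity of $\Nrd\colon \mathbb{H}^{\times}\to F^{\times}$. The two small explicit identifications you flag in passing -- that $\langle a,b\rangle$ in dimension two becomes $a\cdot\nnn_{F_{1}/F}$ on $F_{1}=F(\sqrt{-ab})$, and that the $E$-hermitian trace form on $\mathbb{H}$ has diagonal values $\Tr_{\mathbb{H}/E}(\tau(\mathbbm{x})\mathbbm{x})=\Nrd(\mathbbm{x})\neq 0$ and so is anisotropic -- are exactly the computations needed, and you state them correctly. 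What your version buys is an actual self-contained proof instead of a citation; what the paper's treatment buys is brevity, since these facts are standard and the reference is adequate. There is no gap in your reasoning: the only things left implicit (the Hasse-invariant computation yielding the dimension bounds, and Jacobson's theorem) are classical and appropriately delegated to \cite{Scha}.
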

Suppose   $(V, \langle, \rangle)$ is  an anisotropic hermitian space over $E$. By Hilbert's Theorem 90,  we can  take an element $ \mu$ of $E^{\times}$ such that $\overline{\mu}/{\mu}=-1$.
  Multiplication of $\langle, \rangle$ by $\mu$
  will give a skew hermitian form $\mu \langle, \rangle$ on $V$, so  in analogy with  Theorem \ref{vigneraslemma}, we have
\begin{proposition}\label{propositionforskewher}
Up to isometry,
\begin{itemize}
\item[-]an anisotropic skew hermitian space over $E$ has one of the following forms:
\begin{itemize}
 \item[(i)] $E[a]$,  for $ a \in F^{\times} \textrm{ modulo } \nnn_{E/F}(E^{\times})$, with the form $(x, y)\longmapsto a\mu\tau(x)y$,  $x, y\in E$,
 \item[(ii)] $\mathbb{H}$,  with the form being given as $ (\mathbbm{x}, \mathbbm{y}) \longmapsto \mu \Tr_{\mathbb{H}/E}\big(\tau(\mathbbm{x})\mathbbm{y}\big)$, $\mathbbm{x}, \mathbbm{y} \in \mathbb{H}$;
\end{itemize}
\item[-] an anisotropic right skew hermitian space   over $\mathbb{H}$ has one of the following  forms:
\begin{itemize}
\item[(i)] $\mathbb{H}[\mathbbm{i}]$, for $\mathbbm{i}=\xi, \varpi, \xi \varpi$,
\item[(ii)] $ \mathbb{H}[\mathbbm{i}] \oplus \mathbb{H} [\mathbbm{j}]$,  for $(\mathbbm{i},\mathbbm{j})=(\xi, \mathbbm{e}_{-1}\varpi), (\xi, \mathbbm{e}_{-1}\xi\varpi)$ or $(\varpi, \mathbbm{e}_{-1}\xi\varpi)$,
\item[(iii)] $ \mathbb{H}[\xi]\oplus \mathbb{H} [\varpi]\oplus \mathbb{H}[\mathbbm{e}_{-1}\xi \varpi ] $,

\end{itemize}
where $\mathbbm{e}_{-1}$ is an element of $F(\xi)$ with norm $-1$.
\end{itemize}
\end{proposition}
\begin{proof}
The second assertion is due essentially to  Tsukamoto (see \cite[Theorem 3]{Tsuk}).
\end{proof}

\subsubsection{The hyperbolic unitary groups } Suppose  that $(V=H\oplus V_1, \langle, \rangle)$ is a right $\epsilon$-hermitian space over $D$ of dimension $n$ composed by  a hyperbolic plane $H$ and a subspace $V_1$.
For two vectors $u, v $ in $V$ with $\langle u, u\rangle=0=\langle u, v\rangle$, and  any $d$ in the coset $ \tfrac{1}{2}\langle v, v\rangle+ \mathcal{S}_D$ of $D/{\mathcal{S}_D}$, where
$\mathcal{S}_D=\left\{ s-\epsilon \tau(s)\mid s\in D\right\}$, we define the so-called  \emph{Eichler transformation} related to $u, v, d$ as follows(\emph{cf}. \cite[p. 214]{HOM}):
$e_{u, v, d}(x)=x+ \epsilon u \langle v, x\rangle-(v+ \epsilon u d) \langle u, x\rangle.$
 Let $\EU_V(D)\subseteq \Ua(V)$ be the  group generated  by all above Eichler transformations of $V$.
\begin{theorem}[{\cite[pp.333-335]{HOM}}]
 $[\EU_V(D), \EU_V(D)]=\EU_V(D)$, and  $\Ua(V)=\Ua(H) \cdot \EU_V(D)$.
\end{theorem}
As a consequence we obtain:
\begin{lemma}\label{canonicalsurjectivemap}
There is a  surjective map $\Ua(H)/{[\Ua(H), \Ua(H)]} \longrightarrow \Ua(V)/{[\Ua(V), \Ua(V)]}$.
\end{lemma}

\begin{lemma}\label{Horth}
Let $\mathfrak{A}=\left\{ \begin{pmatrix}
a&0 \\
 0& \overline{a}^{-1} \end{pmatrix}\vert a\in D^{\times}, \overline{a}=\tau(a)\right\} \simeq D^{\times}$.  Despite of  the case $D=F$, $\epsilon=1$, we have:
 \begin{itemize}
  \item[(1)]  $\Ua(H)= \mathfrak{A}\cdot \EU_{H}(D)$,
  \item[(2)] The image of  $ \frac{\mathfrak{A}}{[\mathfrak{A},\mathfrak{A}]}$ or $\frac{D^{\times}}{[D^{\times}, D^{\times}]}$  in  $\frac{\Ua(H)}{[\Ua(H), \Ua(H)]}$ is full.
  \end{itemize}
\end{lemma}
\begin{proof}
The first statement follows from \cite[p. 263]{Scha}, and the second one is a consequence of Lemma \ref{canonicalsurjectivemap}.
\end{proof}

The main purpose  of this section is to achieve the similar results as Lemma \ref{canonicalsurjectivemap} for each  $\epsilon$-hermitian space $V$ over $D$.  Before attempting  to investigate the problem, let us cite the following results from  Carl Riehm's paper \cite{Ri} involving some subgroups  of  $\mathbb{H}^{\times}$.
  \begin{lemma}\label{hermitiandimension1}
\begin{itemize}
 \item[(1)] $[\mathbb{H}^{\times}, \mathbb{H}^{\times}]=\mathbb{SL}_1(\mathbb{H})$.
 \item[(2)] $[\mathbb{SL}_1(\mathbb{H}),\mathbb{SL}_1(\mathbb{H})]= \mathbb{SL}_1(\mathbb{H}) \cap (1 + \mathfrak{P}) $.
 \item[(3)] $\mathbb{SL}_1(\mathbb{H})/[\mathbb{SL}_1(\mathbb{H}),\mathbb{SL}_1(\mathbb{H})]$ is isomorphic with the subgroup $\mathbb{SL}_1( k_{\mathbb{H}})$  consisting of elements of $k_{\mathbb{H}}$ with  norm $1$ in  $k_F$.
  \end{itemize}
\end{lemma}
\begin{proof}
See also \cite[p.648]{PrRa2}.
\end{proof}
\begin{remark}\label{thequotient12}
Suppose now that $\mathbb{H}$ is merely a division algebra over its centre $F$ of dimension $d^2$. Then the above results  also hold.
\end{remark}
\begin{proof}
 See \cite[Section 5 and Theorem 7(iii)(2)]{Ri} for the details.
\end{proof}

Now let $\left\{ 1, \mathbbm{i}, \mathbbm{j}, \mathbbm{k}\right\}$ be a standard base of $\mathbb{H}$ such that $\mathbbm{i} \cdot \mathbbm{j}=-\mathbbm{j} \cdot \mathbbm{i}=\mathbbm{k}$, and $\mathbbm{i}^2=-\alpha, \mathbbm{j}^2=-\beta$. Set $F_1=F(\mathbbm{i})$, and $F_2=F(\mathbbm{j})$.
\begin{lemma}\label{skewhermitiandimension1}
Let $(V=\mathbb{H}[\mathbbm{i}], \langle, \rangle)$ be a right skew hermitian space over $\mathbb{H}$ of dimension $1$.  Then $\Ua(V)= \mathbb{SL}_1(F_1)$ and $\GU(V)=\langle F_1^{\times},  \mathbbm{j}\rangle=F_1^{\times} \cup F_1^{\times} \mathbbm{j}$.
\end{lemma}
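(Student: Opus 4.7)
The plan is to reduce the problem to an explicit calculation inside the decomposition $\mathbb{H} = F_1 \oplus F_1 \mathbbm{j}$, where $F_1 = F(\mathbbm{i})$. Since $V \cong \mathbb{H}$ as a right $\mathbb{H}$-module, every right $\mathbb{H}$-linear endomorphism of $V$ is left multiplication $L_a\colon x \mapsto ax$ by a unique $a \in \mathbb{H}$. Substituting into $\langle x,y\rangle = \tau(x)\mathbbm{i}y$ shows that $L_a$ is a similitude with factor $\lambda \in F^\times$ if and only if $\tau(a)\mathbbm{i} a = \lambda\mathbbm{i}$, and an isometry if and only if in addition $\lambda = 1$.

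Next, writing $a = u + v\mathbbm{j}$ with $u,v \in F_1$ and using the two relations $\mathbbm{j} w = \tau(w)\mathbbm{j}$ for $w \in F_1$ and $\mathbbm{j}\mathbbm{i} = -\mathbbm{i}\mathbbm{j}$, a direct expansion gives
\[
\tau(a)\mathbbm{i}a \;=\; \bigl(\nnn_{F_1/F}(u) - \beta\,\nnn_{F_1/F}(v)\bigr)\mathbbm{i} \;+\; 2 v\tau(u)\,\mathbbm{i}\mathbbm{j}.
\]
Since $\{\mathbbm{i},\mathbbm{i}\mathbbm{j}\}$ is $F_1$-linearly independent in $\mathbb{H}$, matching this with $\lambda\mathbbm{i}$ forces $v\tau(u) = 0$; as $2$ is invertible (odd residual characteristic), one of $u$ or $v$ must vanish.

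If $v = 0$ then $a = u \in F_1^\times$ with $\lambda = \nnn_{F_1/F}(u)$; if $u = 0$ then $a = v\mathbbm{j} \in F_1^\times \mathbbm{j}$ with $\lambda = -\beta\,\nnn_{F_1/F}(v)$. Both possibilities do occur for suitable $\lambda \in F^\times$, so $\GU(V) = F_1^\times \cup F_1^\times \mathbbm{j}$. This set coincides with $\langle F_1^\times,\mathbbm{j}\rangle$ because $\mathbbm{j}$ normalizes $F_1^\times$ (the inner action is the nontrivial Galois element $\tau$) and $\mathbbm{j}^2 = -\beta \in F^\times \subseteq F_1^\times$. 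The first case, specialized to $\lambda = 1$, already contributes exactly $\mathbb{SL}_1(F_1)$ to $\U(V)$.

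The remaining and main point is to rule out the second case for $\U(V)$: it would require some $v \in F_1^\times$ with $\nnn_{F_1/F}(v) = -1/\beta$. A parallel expansion of the reduced norm $\Nrd(a) = a\tau(a)$ in the same decomposition gives $\Nrd(x + y\mathbbm{j}) = \nnn_{F_1/F}(x) + \beta\,\nnn_{F_1/F}(y)$, so such a $v$ would produce $1 + v\mathbbm{j}$ as a nonzero zero of $\Nrd$, contradicting the hypothesis that $\mathbb{H}$ is a division algebra. This anisotropy argument is the conceptual heart of the proof and the only place where the division-algebra property of $\mathbb{H}$ really enters.
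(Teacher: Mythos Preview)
Your proof is correct and follows essentially the same route as the paper's: both decompose $a$ as $u + v\mathbbm{j}$ with $u,v\in F_1$, expand $\tau(a)\mathbbm{i}a$, and read off that one of $u,v$ must vanish. The only minor difference is that the paper rules out the $u=0$ case for $\U(V)$ by invoking the standard fact $\mathbbm{j}^2\notin \nnn_{F_1/F}(F_1^\times)$ directly, whereas you re-derive this from the anisotropy of $\Nrd$ on $\mathbb{H}$; these are equivalent formulations of the same obstruction.
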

\begin{proof}
An element $\alpha_0+ \alpha_1 \mathbbm{j} $ with $\alpha_0, \alpha_1 \in F_1$ lies in $ \Ua(V)$ if and only if   $(\overline{\alpha_0} -\alpha_1 \mathbbm{j})\mathbbm{i} (\alpha_0 +\alpha_1\mathbbm{j})=\mathbbm{i}$; this means $\alpha_1=0$ and $\nnn_{F_1/F}(\alpha_0)=1$, or $\alpha_0=0$ and  $\nnn_{F_1/F}(\alpha_1) \mathbbm{j}^2=1$. But the second case contradicts to $ \mathbbm{j}^2\notin \nnn_{F_1/F}(F_1^{\times})$, so $\Ua(V)=\mathbb{SL}_1(F_1)$. Similarly, if $g=\alpha_0 + \alpha_1\mathbbm{j} \in \GU(V)$, then $(\overline{\alpha_0} - \alpha_1\mathbbm{j}) \mathbbm{i}(\alpha_0+\alpha_1\mathbbm{j})=\lambda(g) \mathbbm{i}$ for some suitable $\lambda(g)\in F^{\times}$. By calculation, we see that $\alpha_0=0$  or $\alpha_1=0$, so the last result follows.
\end{proof}
The following result is immediate:
\begin{lemma}\label{skewhermitiandimension1}
Let $(V=\mathbb{H}$, $\langle, \rangle=\Trd)$  be a right hermitian space over $\mathbb{H}$ of dimension $1$.  Then $\Ua(V)= \mathbb{SL}_1(\mathbb{H})$ and $\GU(V)=\mathbb{H}^{\times}$.
\end{lemma}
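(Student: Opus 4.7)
The plan is a direct one-line calculation, reducing everything to the reduced-norm condition on $\mathbb{H}$. First I would identify the endomorphism ring: since $V = \mathbb{H}$ is one-dimensional as a right $\mathbb{H}$-module (with, say, the generator $e = 1$), every $\mathbb{H}$-linear automorphism of $V$ is left multiplication by a unique element $g \in \mathbb{H}^{\times}$, so $\Aut_{\mathbb{H}}(V) = \mathbb{H}^{\times}$. This reduces the problem to characterizing those $g \in \mathbb{H}^{\times}$ that preserve (resp.\ scale) the form.

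Next I would read the form on $V$ as the one-dimensional hermitian form $(x,y) \mapsto \tau(x) y$ of Theorem \ref{vigneraslemma} (the ``$\Trd$'' in the statement simply records that the form on the generator equals $1 \in F$, with values pulled back via $\tau$). A single computation gives
\[
\langle gx, gy \rangle \;=\; \tau(gx)(gy) \;=\; \tau(x)\,\tau(g) g\, y,
\]
so the isometry condition becomes $\tau(g) g = 1$ and the similitude condition becomes $\tau(g) g \in F^{\times}$. Invoking the fundamental identity $\tau(g) g = g\tau(g) = \Nrd(g)$, valid for every $g \in \mathbb{H}$, the first equation cuts out exactly $\mathbb{SL}_1(\mathbb{H})$, giving $\U(V) = \mathbb{SL}_1(\mathbb{H})$; the second is automatic for every $g \in \mathbb{H}^{\times}$ (with similitude factor $\lambda(g) = \Nrd(g)$), giving $\GU(V) = \mathbb{H}^{\times}$.

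There is no real obstacle here, which is why the author flags the result as immediate. The only mild point of care is the correct reading of the one-dimensional right-hermitian form (scalars acting on the right, the unitary group acting on the left by left multiplication), after which both assertions fall out at once and place the description squarely in the framework of the norm-one subgroup analyzed in Lemmas \ref{hermitiandimension1}--\ref{thecommutantofSL1}.
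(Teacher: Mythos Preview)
Your argument is correct and is precisely the direct computation the paper has in mind when it says ``the following result is immediate'' (no proof is given in the paper). Your identification of the form with $(x,y)\mapsto \tau(x)y$ and the reduction to $\tau(g)g=\Nrd(g)$ is exactly what is needed.
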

\subsubsection{The anisotropic unitary group I}\label{Theanisotropicunitary groupI} Let $(V, \langle, \rangle)$ be  an anisotropic hermitian space over $E$.
\begin{lemma}
If $\dim_E(V)=1$, then $\Ua(V, \langle, \rangle)=\mathbb{SL}_1(E)$, and $\GU(V, \langle, \rangle)=E^{\times}$.
\end{lemma}
\begin{proof}
Obviously.
\end{proof}
Now let us begin to discuss the case when $\dim_E(V)=2$. By Theorem \ref{vigneraslemma}, we assume $E=F(\mathbbm{i})$,  with $\mathbbm{i}^2=-\alpha\in F^{\times}$.  By \cite[p.358]{Scha}, we can choose an element $\mathbbm{j}$ in $\mathbb{H}$, such that $\mathbbm{i}\mathbbm{j}=-\mathbbm{j}\mathbbm{i}=\mathbbm{k}$,  $\mathbbm{j}^2=-\beta \in F^{\times}$,   and  $\left\{1, \mathbbm{i}, \mathbbm{j}, \mathbbm{k}\right\}$ forms a standard basis of $\mathbb{H}$.  Then there is a  decomposition  of $E$-vector space:
 $\mathbb{H}=E \oplus \mathbbm{j} E.$
   Let $\Tr_{\mathbb{H}/E}$ denote the canonical projection from $\mathbb{H}$ to $E$ defined by
$\Tr_{\mathbb{H}/E}(e_1 + \mathbbm{j}e_2)=e_1$, for $ e_1,e_2\in E.$
Now, we can define an $E$-hermitian form $\langle, \rangle$ on $\mathbb{H}$ as follows:
$$\langle e_1+\mathbbm{j}e_2, e_1' + \mathbbm{j}e_2' \rangle=\tr_{\mathbb{H}/E}\Big( \overline{(e_1+ \mathbbm{j} e_2)}(e_1' + \mathbbm{j}e_2')\Big)=\overline{e_1} e_1' + \overline{e_2} \overline{\mathbbm{j}}\mathbbm{j} e_2'= \overline{e_1} e_1' + \beta \overline{e_2} e_2'. $$
If given $a, a' \in E$, we then have
$\langle \big( e_1+ \mathbbm{j}e_2 \big) a, \big( e_1' + \mathbbm{j}e_2'\big) a' \rangle = \overline{a}\, \overline{e_1} e_1' a' +  \beta \overline{a}\,\overline{e_2} e_2' a'=\overline{a} \langle e_1+\mathbbm{j}e_2, e_1' + \mathbbm{j} e_2' \rangle a'.$
 Moreover, if $\langle e_1+\mathbbm{j}e_2, e_1+ \mathbbm{j}e_2 \rangle=\Nrd(e_1)+ \beta \Nrd(e_2)=0,$ then $\Nrd(e_1)=\Nrd(e_2)=0$, namely $e_1=e_2=0$. So  $(\mathbb{H}, \langle, \rangle)$ is the unique  anisotropic  space over $E$ of dimension $2$, up to isometry. Under the basis $\left\{ 1, \mathbbm{j}\right\}$ of $\mathbb{H}$, we identify $\Ua(V, \langle, \rangle)$ with the unitary matrix group $\Ua_V(D)$ consisting of elements $G=\begin{pmatrix}
\alpha_{11}&\alpha_{12}\\
 \alpha_{21} & \alpha_{22}\end{pmatrix} \in \GL_2(E)$ such that
 \begin{equation}\label{AnisotropicE}
G^{\ast}\begin{pmatrix}
  1& 0  \\
  0& \beta
 \end{pmatrix} G= \begin{pmatrix}
  1& 0  \\
  0 & \beta
 \end{pmatrix},
\end{equation}
where $\ast: \GL_{2}(E) \longrightarrow \GL_{2}(E)$ is the conjugate transpose operator. By calculation, (\ref{AnisotropicE}) is equivalent to,
\begin{equation}\label{thedefiningequations2}
\begin{split}
\nnn_{E/F}(\alpha_{11}) + \beta \nnn_{E/F}(\alpha_{21})=1, \\
\nnn_{E/F}(\alpha_{12}) + \beta \nnn_{E/F}( \alpha_{22})=\beta,\\
\overline{\alpha_{11}} \alpha_{12} + \beta\overline{\alpha_{21}} \alpha_{22}=0.
\end{split}
\end{equation}
\begin{lemma}\label{theexpression}
\begin{itemize}
\item[(1)] There is a canonical embedding
$\mathbb{SL}_1(\mathbb{H}) \longrightarrow \Ua(V, \langle, \rangle);$
$ \alpha_1 + \mathbbm{j} \alpha_2 \longmapsto  \begin{pmatrix}
  \alpha_1 & -\beta \overline{\alpha_2}  \\
  \alpha_2 & \overline{\alpha_1}
 \end{pmatrix}$.
 \item[(2)] $\Ua(V, \langle, \rangle)$ contains a subgroup $\mathfrak{U}= \left\{\begin{pmatrix}
  1 & 0  \\
  0 & u
 \end{pmatrix} \mid  u \in \mathbb{SL}_1(E)\right\}$.  Moreover, $\Ua(V, \langle, \rangle) =\left\{ H \cdot A\mid  H\in \mathbb{SL}_1(\mathbb{H}),  A\in \mathfrak{U}\right\}$.
 \end{itemize}
\end{lemma}
 \begin{proof}
The first part of  (2) follows from  the above equations (\ref{thedefiningequations2}). By definition, an element $\alpha_1 + \mathbbm{j} \alpha_2 \in \mathbb{SL}_1(\mathbb{H})$ belongs to $\Ua(V, \langle, \rangle)$, and sends $1$ to $\alpha_1 + \mathbbm{j} \alpha_2$, and $\mathbbm{j}$ to $-\beta \overline{\alpha_2} + \mathbbm{j} \overline{\alpha_1}$, which gives the result (1). Moreover, $G =\begin{pmatrix}
  \alpha_{11} & \alpha_{12} \\
  \alpha_{21} & \alpha_{22}
 \end{pmatrix} \in \Ua(V, \langle, \rangle)$ can be written in the following forms:
 \begin{itemize}
\item[(1)] $G=  \begin{pmatrix}
  \alpha_{11}  & \alpha_{12} \overline{\alpha_{11}} \alpha_{22}^{-1}  \\
  \alpha_{21} & \overline{\alpha_{11}}
 \end{pmatrix}  \cdot  \begin{pmatrix}
 1 & 0  \\
  0 & \overline{\alpha_{11}}^{-1} \alpha_{22}
 \end{pmatrix} $ with $\alpha_{12} \overline{\alpha_{11}} \alpha_{22}^{-1}=-\beta \overline{\alpha_{21}}$ by equations (\ref{thedefiningequations2}), if $\alpha_{11} \neq 0$, and $\alpha_{22} \neq 0$;
\item[(2)] $G=  \begin{pmatrix}
  0  & -\beta \overline{\alpha_{21}}   \\
  \alpha_{21} & 0
 \end{pmatrix}   \cdot  \begin{pmatrix}
 1  & 0 \\
 0 & \alpha_{12} \big( -\beta \overline{\alpha_{21}}\big)^{-1}
 \end{pmatrix}  $ with $\nnn_{E/F}(\alpha_{12} (-\beta \overline{\alpha_{21}})^{-1})=1$ by equations  (\ref{thedefiningequations2}), if $\alpha_{11}= \alpha_{22}=0$ is  possible.
\end{itemize} So the other part of (2) follows.
\end{proof}

\begin{lemma}\label{CommutatorsubgroupofSL1}
$[\Ua(V, \langle, \rangle), \Ua(V, \langle, \rangle)] = [\mathbb{SL}_1(\mathbb{H}), \mathbb{SL}_1(\mathbb{H})] \simeq  \mathbb{SL}_1(\mathbb{H}) \cap (1 +\mathfrak{P})$.
\end{lemma}
\begin{proof}
We prove the result along with the cases given in Proposition \ref{propositionforskewher}.  First of all, $$[\Ua(V, \langle, \rangle), \Ua(V, \langle, \rangle)]= [\mathbb{SL}_1(\mathbb{H}), \mathbb{SL}_1(\mathbb{H})] [\mathbb{SL}_1(\mathbb{H}), \mathfrak{U}] \subseteq \mathbb{SL}_1(\mathbb{H}).$$  In case $\mathbbm{i}= \xi$, $\mathbbm{j}= \varpi$ or $\xi\varpi$, the commutator of two elements $G_1=\begin{pmatrix}
  \alpha_1 & -\beta \overline{\alpha}_2  \\
  u_1\alpha_2 &   u_1\overline{\alpha}_1
 \end{pmatrix}$, $G_2=\begin{pmatrix}
  \alpha_1' & -\beta \overline{\alpha'}_2  \\
  u_1'\alpha_2' &   u_1'\overline{\alpha'}_1
 \end{pmatrix} \in \Ua(V, \langle, \rangle)$, modulo $\mathfrak{P}$,  is presented by
 $$ [G_1, G_2]\equiv \left[ \begin{pmatrix}
  \alpha_1 &0  \\
  u_1\alpha_2 &   u_1\overline{\alpha}_1
 \end{pmatrix},  \begin{pmatrix}
  \alpha_1' &0  \\
  u_1'\alpha_2' &   u_1'\overline{\alpha'}_1
 \end{pmatrix}\right]\equiv  \begin{pmatrix}
  1 &0  \\
  \ast &  1
 \end{pmatrix} \mod \mathfrak{P},$$ which means $[G_1, G_2]\equiv 1 \mod \mathfrak{P} $ in $\mathbb{SL}_1(\mathbb{H})$.  In case $\mathbbm{i}= \varpi$ or $\xi\varpi$, $\mathbbm{j}= \xi$, an element $G=\begin{pmatrix}
  1 &0  \\
  0 &  u_1
 \end{pmatrix} \in \mathfrak{U}$ has the form $ \begin{pmatrix}
  1 &0  \\
  0 &  \pm 1
 \end{pmatrix}  \mod \mathfrak{P}$; hence  the derived  subgroup of $\Ua(V, \langle, \rangle)$ degenerates to that of $\mathbb{SL}_1(\mathbb{H})$.
\end{proof}
\begin{remark}
By Proposition \ref{propositionforskewher}, arguing for the unitary groups of   anisotropic skew hermitian spaces over $E$ eventually reduces to the above cases.
\end{remark}
\subsubsection{The anisotropic unitary groups II}\label{Satakereallyresults}

Let $\big(V=  \mathbb{H} [\mathbbm{i}]\oplus \mathbb{H}[\mathbbm{j}] \oplus \mathbb{H}[\mathbbm{l}], \langle, \rangle\big)$ be a right anisotropic skew hermitian space over $\mathbb{H}$ of dimension $3$ subject to the conditions that (1) $\{ 1, \mathbbm{i}, \mathbbm{j}, \mathbbm{k}=\mathbbm{i}\mathbbm{j}=-\mathbbm{j}\mathbbm{i}\}$ is a standard basis of $\mathbb{H}$; (2) $\mathbbm{i}^2=-\alpha$, $\mathbbm{j}^2=-\beta$, $\mathbbm{l}^2=-\mathbbm{k}^2= \alpha \beta$;\footnote{ We use the different notion from Satake's\cite{Sa}: $\mathbbm{i}^2=-\alpha$, $\mathbbm{j}^2=-\beta$ instead of $\mathbbm{i}^2=\alpha$, $\mathbbm{i}^2=\beta$.}  (3)  $\mathbbm{l}=\mathbbm{i}b_0+\mathbbm{j}c_0+\mathbbm{k}d_0$ for $ b_0, c_0, d_0\in F$, so $b_0^2 \alpha + c_0^2 \beta + d_0^2 \alpha \beta=-\alpha \beta$. The purpose of this sub-section is to review the concrete description  of  the group $\Ua(V, \langle, \rangle)$ by Satake\cite{Sa}, and derive some consequences.

Let us begin with recalling two results in classical groups. Let   $M$ be   a vector space over $F$ of dimension $4$ with  basis $\left\{ x_1, \cdots, x_4\right\}$.
\begin{theorem}[{\cite[Chapitre IV, \S 8]{Di}}]\label{cliffordgroup}
There is an exact sequence
\[
\begin{array}[c]{cccccccccc}
 1&\longrightarrow & F^{\times}       &  \longrightarrow & (F^{\times} \times \GL(M)) &\stackrel{\kappa}{\longrightarrow}  & \GO^+(\bigwedge^2 M, Q) &  \longrightarrow & 1 \\
  &                & t    &    \longmapsto    &  (t^2, t^{-1})  &                       &  &    &
\end{array}
\]
where $ \GO^+(\bigwedge^2 M, Q)=\left\{ g \in \GO(\bigwedge^2 M, Q)\mid \det(g)= \lambda(g)^3\right\}$, and $\lambda:  \GO(\bigwedge^2 M, Q) \longrightarrow F^{\times}$ is the similitude character.
\end{theorem}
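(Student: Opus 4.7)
The plan is to define $\kappa(s,g) := s \cdot \widetilde g$, where $\widetilde g \in \GL(\bigwedge^2 M)$ is the restriction to degree two of the algebra automorphism of $\bigwedge^{\ast} M$ induced by $g \in \GL(M)$. Multiplicativity is immediate. To check that $\kappa(s,g)$ lands in $\GO^+(\bigwedge^2 M, Q)$, I would extend $\widetilde g$ to $\bigwedge^{\ast} M$; since its action on the line $\bigwedge^4 M$ is multiplication by $\det(g)$, the defining identity $\omega \wedge \omega' = Q(\omega,\omega')\,(x_1 \wedge \cdots \wedge x_4)$ yields $Q(\widetilde g\omega,\widetilde g\omega') = \det(g)\, Q(\omega,\omega')$, so $\widetilde g$ is a similitude with multiplier $\det(g)$. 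A brief eigenvalue count shows $\det_F(\widetilde g) = \det(g)^3$, and rescaling by $s$ preserves the relation $\det = \lambda^3$.

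For the kernel I would argue as follows. If $\kappa(s,g) = 1$ then $\widetilde g$ acts as the scalar $s^{-1}$ on $\bigwedge^2 M$, so $g$ stabilises every $2$-dimensional subspace $N \subset M$ (because $\widetilde g$ preserves the line $\bigwedge^2 N$, which is the Pl\"ucker image of $N$); intersecting $2$-planes then forces $g$ to stabilise every line, so $g = t \cdot \id_M$ for some $t \in F^{\times}$. Consequently $\widetilde g = t^2 \cdot \id$ and $s = t^{-2}$, which shows that $(s,g)$ lies in the image of the stated section.

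The heart of the proof is surjectivity, for which I would invoke the Klein correspondence. The isotropic cone $\{Q(\omega,\omega)=0\}$ coincides with the locus of decomposable bivectors $v \wedge w$, and its maximal isotropic subspaces (all of dimension $3$) split into two disjoint families: the $\alpha$-family $\{\bigwedge^2 N : N \subset M,\ \dim N = 3\}$ and the $\beta$-family $\{v \wedge M : 0 \ne v \in M\}$, the latter naturally parametrised by the projective space of $M$. Any $h \in \GO(\bigwedge^2 M, Q)$ permutes these two families, and the condition $\det(h) = \lambda(h)^3$ is precisely what forces each family to be preserved separately: a swap would yield the opposite sign, as one checks on $\widetilde g$ composed with such a swap. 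Given $h \in \GO^+$, its action on the $\beta$-family is a projective linear automorphism of $\mathbb{P}(M)$; lifting this to some $g \in \GL(M)$ and setting $h' := h\, \widetilde g^{-1}$, one sees that $h'$ fixes each $\beta$-plane setwise, hence fixes each intersection $v \wedge M \cap v' \wedge M = F(v \wedge v')$ as a line, i.e.\ rescales every decomposable bivector. Comparing the scaling factors on a sum such as $(e_1+e_3) \wedge (e_2+e_4)$ with those on the individual basis vectors $e_i \wedge e_j$ forces all these scalars to coincide in a single $s \in F^{\times}$, and therefore $h = s \widetilde g = \kappa(s,g)$.

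The main obstacle is the identification of the $+$ condition $\det(h) = \lambda(h)^3$ with the preservation of each ruling of the Klein quadric separately; this is classical content (to which the reference \cite{Di} is devoted) but requires some care over a general non-algebraically-closed $F$. Once this structural input is granted, the remainder is routine exterior algebra.
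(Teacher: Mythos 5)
The paper offers no proof of this theorem: it is quoted directly from Dieudonn\'e, Chapitre IV, \S 8, and the only ingredient recorded in the surrounding text is the identity $Q(\widetilde g\cdot x, \widetilde g\cdot y)=\det(g)\,Q(x,y)$. Your argument is a correct reconstruction of the classical proof via the Klein correspondence, which is precisely the route of the cited reference, so there is nothing internal to the paper to compare it against. Two points deserve slightly more care than you give them. First, the identification of the condition $\det(h)=\lambda(h)^3$ with preservation of each ruling of the quadric is indeed the real content; you flag it honestly, and it is legitimate to take it from the reference since the theorem itself is being cited. Second, your claim that the permutation of the family $\{v\wedge M\}$ induced by $h$ is a \emph{linear} projective automorphism is not automatic over a general field: the fundamental theorem of projective geometry only yields a semilinear map from preservation of collinearity. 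The clean fix is to avoid that theorem altogether and use that $h$ restricted to each $3$-dimensional subspace $v\wedge M$ is an $F$-linear isomorphism onto $\phi(v)\wedge M$, from which a genuine $g\in\GL(M)$ inducing $\phi$ can be assembled directly. Granting these, the verification that $\kappa$ lands in $\GO^+$ (including $\det(\widetilde g)=\det(g)^{3}$ by the eigenvalue count), the kernel computation via stabilisation of all $2$-planes, and the rescaling argument on sums of decomposable bivectors are all sound.
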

\begin{remark}\label{examplebaisi}
If we choose $x_{ij}=x_i \wedge x_j$ for  $1\leq i< j\leq 4$,  to be the basis of $\bigwedge^2 M$, then  $( Q(x_{ij}, x_{kl}))= \diag\Big(\begin{pmatrix}
0 &  1   \\
1 &  0   \end{pmatrix},
\begin{pmatrix} 0& -1   \\
  -1&  0\end{pmatrix},
 \begin{pmatrix}  0&  1   \\
 1&  0\end{pmatrix}\Big)
$. Through the mapping $\kappa$, the action of $g=(a_{ij}) \in \GL_4(F) \simeq \GL(M)$ on $x_k \wedge x_l$ is given by $g \cdot (x_k \wedge x_l)= \sum_{1 \leq  i, j \leq 4} a_{ik} a_{jl} x_{i}\wedge x_{j}$.   Following \cite{Sa}, we denote $g^{(2)}$ to be the matrix in $\GL_6(F)$ such that  $g \cdot (x_{12}, x_{34}; x_{13}, x_{24}; x_{14}, x_{23})= (x_{12}, x_{34}; x_{13}, x_{24}; x_{14}, x_{23}) g^{(2)}$.
\end{remark}

Set $F_1=F(\mathbbm{i})$, $K=F_1\big(\sqrt{-\beta}\big)$,  and $\Gal(F_1/F)= \langle \sigma \rangle$, $\Gal(K/F)=\langle   \sigma, \tau \rangle$. For the above  right skew hermitian space $V$ over $\mathbb{H}$, we let $\textbf{U}_V$ (resp. $\textbf{SU}_V$) be the associated unitary (resp. the special unitary) group  scheme.
\begin{lemma}
$\Ua(V, \langle, \rangle)$ is isomorphic to $\textbf{\emph{SU}}_V(F)$.
\end{lemma}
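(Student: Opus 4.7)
The plan is to identify $\U(V, \langle, \rangle)$ with $\textbf{U}_V(F)$ by virtue of the adjoint involution, and then to show that the $F$-linear determinant is automatically trivial on $\textbf{U}_V(F)$, so that $\textbf{U}_V(F) = \textbf{SU}_V(F)$.

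For the first identification, the adjoint $\ast$ was set up precisely so that the condition $\langle f(v), f(v')\rangle = \langle v, v'\rangle$ for all $v, v' \in V$ translates, via the non-degeneracy of $\langle, \rangle$, into $f^\ast f = 1$. Because $\End_{\mathbb{H}}(V)$ is finite-dimensional over $F$, $f^\ast f = 1$ already forces $f$ to be invertible with $f^{-1} = f^\ast$, hence $ff^\ast = 1$ as well. This yields a tautological identification $\U(V, \langle, \rangle) = \textbf{U}_V(F)$.

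The main step is then to prove $\textbf{U}_V(F) = \textbf{SU}_V(F)$. I would compare the $F$-linear determinant with the reduced norm on the central simple $F$-algebra $\End_{\mathbb{H}}(V) \simeq \M_3(\mathbb{H})$. After extending scalars to a splitting field $\overline{F}$, one has $\mathbb{H} \otimes_F \overline{F} \simeq \M_2(\overline{F})$ and $\End_{\mathbb{H}}(V) \otimes_F \overline{F} \simeq \M_6(\overline{F})$; meanwhile $V \otimes_F \overline{F}$ has $\overline{F}$-dimension $12$ and hence decomposes as two copies of the standard $6$-dimensional simple $\M_6(\overline{F})$-module. An element $a \in \End_{\mathbb{H}}(V)$ therefore acts on $V \otimes \overline{F}$ as $\widetilde{a} \oplus \widetilde{a}$, which yields the identity
\[
{\det}_F(a) \;=\; \Nrd_{\M_3(\mathbb{H})/F}(a)^2.
\]
Since $\ast$ is an involution of the first kind on $\End_{\mathbb{H}}(V)$, it fixes the centre $F$ pointwise, and a standard property of reduced norms gives $\Nrd(a^\ast) = \Nrd(a)$. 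Applying $\Nrd$ to $aa^\ast = 1$ therefore produces $\Nrd(a)^2 = 1$, whence ${\det}_F(a) = 1$, and $a \in \textbf{SU}_V(F)$.

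The main technical point is the identity ${\det}_F = \Nrd^2$; the exponent two is not cosmetic but reflects that the standard $\M_6(\overline{F})$-module has $\overline{F}$-dimension $6$, while $V \otimes_F \overline{F}$ has dimension $12$ and so realises the standard module with multiplicity two. Once this and the $\ast$-invariance of the reduced norm for first-kind involutions are in hand, the chain $\U(V, \langle, \rangle) = \textbf{U}_V(F) = \textbf{SU}_V(F)$ follows by stringing the two identifications together.
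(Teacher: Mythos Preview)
Your argument is correct. The identification $\U(V,\langle,\rangle)=\textbf{U}_V(F)$ via the adjoint involution is immediate, and your determinant computation is the right one: $V$ has $F$-dimension $12$, the simple $\M_6(\overline{F})$-module has dimension $6$, so $V\otimes\overline{F}$ carries two copies and ${\det}_F=\Nrd^2$. Since the adjoint involution $\ast$ is of the first kind (it is the adjoint of a form that is $\epsilon$-hermitian for the canonical involution on $\mathbb{H}$, which is itself of the first kind), one has $\Nrd(a^\ast)=\Nrd(a)$, and $aa^\ast=1$ forces $\Nrd(a)^2=1$, hence ${\det}_F(a)=1$.

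The paper does not give a proof at all here; it simply refers to \cite[p.~21]{MVW} and \cite[\S2.2]{Gan}. Your write-up is therefore strictly more informative than what appears in the text, and the reduced-norm argument you use is exactly the standard mechanism those references invoke. One small stylistic suggestion: you might state explicitly that $\ast$ is of the first kind because the underlying involution $\tau$ on $\mathbb{H}$ is, rather than leaving it implicit; this is the only place a reader might pause.
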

\begin{proof}
See \cite[p. 21]{MVW} or \cite[Section 2.2]{Gan}.
\end{proof}

By turning to $V_{F_1}= V\otimes_F F_1$, indeed we obtain a right skew hermitian space over the splitting algebra $\M_2(F_1)$ with the  skew hermitian form $\langle,  \rangle_{V_{F_1}}$ induced by the scalar extension. Moreover according to \cite[Section 1]{Sa}, the unitary group $\Ua(V_{F_1}, \langle, \rangle_{V_{F_1}})$ is isomorphic to an orthogonal group $\Oa(W, (, )_{W})$, where  $W = F^{2} \times F^2 \times F^2$, and the form is given by $( (w_1, w_2, w_3), (w_1', w_2', w_3'))_{W} =   w_1^t Q_1w_1'+ w_2^t Q_2 w_2' +w_3^t Q_3 w_3'$,  for $w_i, w_i' \in F^2$,  $Q_1=   \begin{pmatrix}
 0 & -\mathbbm{i} \\
-\mathbbm{i} & 0
\end{pmatrix}$, $Q_2= \begin{pmatrix}
 1 & 0 \\
0 & \beta
\end{pmatrix}$, $Q_3=\begin{pmatrix}
 c_0-d_0\mathbbm{i} & -b_0 \mathbbm{i} \\
-b_0 \mathbbm{i} & \beta(c_0 + d_0 \mathbbm{i})
\end{pmatrix}$.   By \cite[p. 405]{Sa}, we let $P_1= \begin{pmatrix}
 1 & 0 \\
0 & 2\mathbbm{i}
\end{pmatrix}$, $P_2= \begin{pmatrix}
 1 & \sqrt{-\beta} \\
1 & - \sqrt{-\beta}
\end{pmatrix}$, $P_3= \begin{pmatrix}
 -c_0 + d_0 \mathbbm{i}& (b_0 +\sqrt{-\beta})\mathbbm{i}  \\
1 & \frac{(-b_0+ \sqrt{-\beta})\mathbbm{i}}{c_0-d_0\mathbbm{i}}
\end{pmatrix}$, and $P= \diag(P_1, P_2, P_3)$. Let $I=   \begin{pmatrix}
     &  \begin{pmatrix}
-\frac{b_0+ \sqrt{-\beta}}{2} &0\\
0& \frac{\sqrt{-\beta}}{2\mathbbm{i}}
\end{pmatrix}      \\
    \begin{pmatrix}
1 &0\\
0&\frac{\mathbbm{i}(b_0 + \sqrt{-\beta})}{\sqrt{-\beta}}
\end{pmatrix}\end{pmatrix} \in \GL_4(K)$,  $J=   \begin{pmatrix}
     \begin{pmatrix}
0 & -c_0+ d_0\mathbbm{i}\\
1 & 0
\end{pmatrix} &  \\
&  \begin{pmatrix}
0 &-1\\
c_0-d_0\mathbbm{i}  & 0
\end{pmatrix}
\end{pmatrix} \in \GL_4(K) $.
\begin{lemma}[{\cite[Section 3]{Sa}}]\label{thesubgroupofO}
There is an exact sequence
$1 \longrightarrow F_1^{\times} \longrightarrow C^+(W, (, )_W) \stackrel{\kappa}{\longrightarrow} \SO(W, (,)_W) \longrightarrow 1,$
where $C^+(W, (, )_W)=\left\{ (t,g)\mid g\in \GL_4(K), t\in F_1^{\times}\textrm{ such that }t^2 \det(g)=1\textrm{ and }J^{-1}gJ=g^{\tau}\right\}$. The mapping $\kappa$ is defined by $(t,g) \longmapsto t P^{-1} g^{(2)} P$, where $g^{(2)}$ is given in Remark \ref{examplebaisi}.
\end{lemma}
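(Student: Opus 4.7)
The plan is to deduce this exact sequence from the $K$-level exact sequence of Lemma~\ref{thegroupofO} by Galois descent along $\Gal(K/F_1)=\langle\tau\rangle$, combined with Hilbert's Theorem~90. Since the matrix $Q$ defining the form on $W$ has entries in $F_1$, the group $\SO(W,(,)_W)$ is precisely the $\tau$-fixed subgroup of $\SO(W_K,(,)_{W_K})$, so the issue is to realize this fixed subgroup as the image of $\kappa$ restricted to a suitable subgroup of $C^+(W_K,(,)_{W_K})$.

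First I would verify that $\kappa$ is well-defined on $C^+(W,(,)_W)$. Given $(t,g)$ with $t\in F_1^{\times}$, $g\in\GL_4(K)$, $t^2\det(g)=1$, and $J^{-1}gJ=g^{\tau}$, the image $tP^{-1}g^{(2)}P$ already lies in $\SO(W_K,(,)_{W_K})$ by Lemma~\ref{thegroupofO}; substituting the relations $t^{\tau}=t$ and $J^{-1}gJ=g^{\tau}$ into the right-hand side of equation~(\ref{ptranslation}), together with the computed identity $P(P^{-1})^{\tau}=\frac{1}{c_0-d_0\mathbbm{i}}J^{(2)}$, gives $\tau$-invariance, so $\kappa(t,g)\in\SO(W,(,)_W)$. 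Second, I would compute the kernel: if $tP^{-1}g^{(2)}P=\id$, then by Lemma~\ref{thegroupofO} we have $(t,g)=(s^2,s^{-1})$ for some $s\in K^{\times}$; the condition $t\in F_1^{\times}$ forces $s^{\tau}=\pm s$, and then $J^{-1}s^{-1}J=s^{-1}=(s^{-1})^{\tau}$ forces $s^{\tau}=s$, so $s\in F_1^{\times}$. Hence the kernel is the diagonally embedded $F_1^{\times}$.

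For surjectivity, I would pick $A\in\SO(W,(,)_W)$ and, viewing $A$ inside $\SO(W_K,(,)_{W_K})$, choose a preimage $(t,g)\in C^+(W_K,(,)_{W_K})$ via Lemma~\ref{thegroupofO}. The computation culminating in equation~(\ref{ptranslation}) combined with the explicit formula for $P(P^{-1})^{\tau}$ rewrites the $\tau$-invariance of $A$ as the existence of an $a\in K^{\times}$ with $t^{\tau}t^{-1}=a^2$ and $J^{-1}gJ(g^{-1})^{\tau}=a$. A short check (using $(J^{-1}gJ)(g^{-1})^{\tau}\cdot\tau$-conjugate of itself equals $1$, which follows from $J^{\tau}J=\lambda\cdot I$ for the appropriate scalar arising from $J^{(2)}$ squaring to a scalar times identity up to the expected sign) shows $\nnn_{K/F_1}(a)=1$. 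Hilbert~90 then provides $b\in K^{\times}$ with $a=b^{\tau}/b$, and replacing $(t,g)$ by $(tb^2,gb^{-1})$ leaves $\kappa(t,g)$ unchanged while yielding $t^{\tau}=t$ (hence $t\in F_1^{\times}$) and $J^{-1}gJ=g^{\tau}$, as required.

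The step I expect to be the main obstacle is the verification that $a$ satisfies $\nnn_{K/F_1}(a)=1$: one has to track how the two equations $t^{\tau}t^{-1}=a^2$ and $J^{-1}gJ(g^{-1})^{\tau}=a$ interact under a second application of $\tau$, using the rather delicate fact that $J^{\tau}=J^{-1}$ up to a unit (readable from the explicit block form of $J$, since the two off-diagonal entries in each block are swapped by $\tau$ into their negatives/inverses in a controlled way). Once this cocycle identity is in hand, Hilbert~90 and the renormalization finish the descent, and the remaining verifications are routine.
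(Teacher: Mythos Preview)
Your proposal is correct and follows essentially the same route as the paper: Galois descent from the $K$-level sequence of Lemma~\ref{thegroupofO} via equation~(\ref{ptranslation}) and the identity $P(P^{-1})^{\tau}=\frac{1}{c_0-d_0\mathbbm{i}}J^{(2)}$, leading to $t^{\tau}t^{-1}=a^2$, $J^{-1}gJ(g^{-1})^{\tau}=a$, the norm-one check, Hilbert~90, and the renormalization $(t,g)\mapsto(tb^2,gb^{-1})$. You actually supply more detail on well-definedness and the kernel than the paper, which states the lemma immediately after the descent computation without spelling these out.
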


Accord to \cite[p.404]{Sa},  $ \Ua(V, \langle, \rangle) $ is isomorphic to the $\Gal(F_1/F)$-invariant part of $ \SO(W, (, )_W)$.
\begin{proposition}[{\cite[p. 407]{Sa}}]\label{Satake'sresult1}
There exists an exact sequence $1 \longrightarrow F^{\times} \longrightarrow C^+(V, \langle, \rangle) \stackrel{\kappa}{\longrightarrow} \textbf{\emph{SU}}_V(F) \longrightarrow 1$,
where $C^+(V, \langle, \rangle)=\{ (t,g)\mid g\in \GL_4(K), t\in F^{\times} \textrm{ such that } t^2\det(g)=1, J^{-1} g J= g^{\tau}, I^{-1} g I =g^{\sigma}\}$.
\end{proposition}

Let   $\mathbb{D}_4=  \left\{ 0\right\} \cup \left\{ g\in \GL_4(K)\mid J^{-1}g J= g^{\tau}, I^{-1} gI =g^{\sigma}\right\}$. Then  it was shown by Satake in \cite[p.407]{Sa} that $\mathbb{D}_4$ is a division algebra over $F$ of degree $4$, and  $\mathbb{D}_4= \widetilde{K} +  \mathbbm{a}_1\widetilde{K} + \mathbbm{a}_2\widetilde{K}  +  \mathbbm{a}_3\widetilde{K} $ for $\widetilde{K}=\left\{ \widetilde{\alpha} =\diag( \alpha, \alpha^{\tau}, \alpha^{\sigma},  \alpha^{\sigma \tau})\mid \alpha \in K \right\}$ and  $\mathbbm{a}_1=\begin{pmatrix}
    X_{11} &  0 \\
    0  &  X_{22}
    \end{pmatrix}\in\GL_4(K) $, $\mathbbm{a}_2= \begin{pmatrix}
    0 &  Y_{12} \\
    Y_{21}  &  0
    \end{pmatrix}\in \GL_4(K) $,  $\mathbbm{a}_3= \begin{pmatrix}
    0 &  Z_{12} \\
    Z_{21}  &  0
    \end{pmatrix}\in \GL_4(K) $   with $X_{11}=\begin{pmatrix}
    0 &  -(c_0+ d_0 \mathbbm{i})\\
    1  &  0
    \end{pmatrix}$, $X_{22}=\begin{pmatrix}
    0 &  -\frac{\sqrt{-\beta}(c_0+d_0\mathbbm{i})}{\mathbbm{i}(b_0+\sqrt{-\beta})}   \\
    \frac{\mathbbm{i}}{\sqrt{-\beta}} (b_0+\sqrt{-\beta})   &  0
    \end{pmatrix}$, $Y_{21}=\begin{pmatrix}
    1 &  0\\
    0  &  c_0-d_0\mathbbm{i}
    \end{pmatrix}$, $Y_{12}=\begin{pmatrix}
   -\frac{b_0+ \sqrt{-\beta}}{2} &  0\\
    0  &  -\frac{b_0-\sqrt{-\beta}}{2(c_0-d_0\mathbbm{i})}
    \end{pmatrix}$,  $Z_{12}=\begin{pmatrix}
   0 &  \frac{\sqrt{-\beta}}{2\mathbbm{i}}\\
    \frac{\sqrt{-\beta}}{2\mathbbm{i}}  & 0
    \end{pmatrix}$, $Z_{21}=\begin{pmatrix}
   0 &  -1\\
   1& 0
    \end{pmatrix}$ in $\GL_2(K)$. For simplicity, we will identity $\widetilde{K}$ with $K$ henceforth.

Now let $\big(V_1= \mathbb{H}[\mathbbm{j}] \oplus \mathbb{H}[\mathbbm{l}], \langle, \rangle_{V_1}\big)$ be a subspace  of $(V, \langle, \rangle)$.  We denote $D_{F(\mathbbm{i})}= K + \mathbbm{a}_1 K$ to be  the  quaternion algebra over $F(\mathbbm{i})$ endowed with the reduced norm $\Nrd(k_1+ \mathbbm{a}_1 k_2)= k_1^{1+ \tau}+ (c_0-d_0\mathbbm{i})k_2^{1+ \tau}$ for $k_1, k_2 \in K$.
\begin{proposition}[{\cite[p. 409]{Sa}}]\label{Satake'sresult5}
There exists an exact sequence
$1  \longrightarrow F^{\times} \longrightarrow  C^+( V_1, \langle, \rangle_{V_1}) \stackrel{\kappa_1}{\longrightarrow}  \Ua(V_1, \langle, \rangle_{V_1}) \longrightarrow 1,$
where $ C^+( V_1, \langle, \rangle_{V_1}) = \left\{ (t, g) \in C^{+}(V, \langle, \rangle) \mid t\in F^{\times},  g\in D_{F(\mathbbm{i})} \textrm{ and } \Nrd( g)t=1\right\}$.
\end{proposition}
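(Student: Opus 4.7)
The plan is to deduce the proposition by restricting the exact sequence of Proposition~\ref{Satake'sresult1} to the subgroup $\U(V_1,\langle,\rangle_{V_1})\subset \U(V,\langle,\rangle)$, and then repackaging the constraints on the lifted pair $(t,g)$ in the language of the quaternion algebra $D_{F(\mathbbm{i})}$ introduced in Lemma~\ref{Satake'sresult4}.

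First I would identify, via $\kappa$, which pairs $(t,g)\in C^{+}(V,\langle,\rangle)$ land in $\U(V_1)$. According to the paragraph preceding Lemma~\ref{Satake'sresult4}, these are exactly the pairs whose $g$ has block-diagonal shape $g=\begin{pmatrix}A_{11}&0\\ 0&A_{22}\end{pmatrix}\in\GL_{2}(M_{2}(K))$ with $\det A_{11}=\det A_{22}=t^{-1}$. Next I would invoke the direct-sum decomposition $\mathbb{D}_{4}=\widetilde{K}+A_{1}\widetilde{K}+A_{2}\widetilde{K}+A_{3}\widetilde{K}$ of Theorem~\ref{Satake'sresult2}(3); the blocks of $\widetilde{K}$ and $A_{1}$ are block-diagonal while those of $A_{2},A_{3}$ are purely off-block-diagonal, so the block-diagonal elements of $\mathbb{D}_{4}$ are precisely $\widetilde{K}+A_{1}\widetilde{K}\simeq K+A_{1}K=D_{F(\mathbbm{i})}$.

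Second, for such a $g=k_{1}+A_{1}k_{2}\in D_{F(\mathbbm{i})}$, I would read off from the explicit form in Theorem~\ref{Satake'sresult2}(2), with $\alpha_{3}=\alpha_{4}=0$ and $\alpha_{i}=k_{i}$, that
$$A_{11}=\begin{pmatrix}k_{1}& -(c_{0}-d_{0}\mathbbm{i})k_{2}^{\tau}\\ k_{2}& k_{1}^{\tau}\end{pmatrix},\qquad A_{22}=A_{11}^{\sigma}\ \text{(up to the conjugating factors from }I\text{)}.$$
A direct $2\times 2$ determinant gives $\det A_{11}=k_{1}^{1+\tau}+(c_{0}-d_{0}\mathbbm{i})k_{2}^{1+\tau}=\Nrd(g)$, the reduced norm computed in Lemma~\ref{Satake'sresult4}, while the relation $I^{-1}gI=g^{\sigma}$ built into $\mathbb{D}_{4}$ forces $\det A_{22}=(\det A_{11})^{\sigma}=\Nrd(g)^{\sigma}$. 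Hence the condition $\det A_{11}=\det A_{22}=t^{-1}$ is equivalent to $\Nrd(g)=\Nrd(g)^{\sigma}$ (i.e.\ $\Nrd(g)\in F^{\times}$) together with $\Nrd(g)=t^{-1}$, which is precisely $\Nrd(g)\,t=1$.

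Third, I would assemble the exact sequence. Putting
$$C^{+}(V_{1},\langle,\rangle_{V_{1}}):=\{(t,g)\in C^{+}(V,\langle,\rangle)\mid g\in D_{F(\mathbbm{i})},\ \Nrd(g)\,t=1\},$$
the kernel of $\kappa$ restricted to this subgroup is still the central $F^{\times}$ inherited from Proposition~\ref{Satake'sresult1}, and surjectivity onto $\U(V_{1})$ follows because every element of $\U(V_{1})\subset\U(V)$ lifts through Proposition~\ref{Satake'sresult1} to some $(t,g)\in C^{+}(V,\langle,\rangle)$, and by the block description its $g$ lies in $D_{F(\mathbbm{i})}$ with $\Nrd(g)t=1$. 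The main obstacle I anticipate is the bookkeeping in the second step: matching the $K$-valued determinants of the two $2\times 2$ diagonal blocks to the reduced norm of the quaternion algebra $D_{F(\mathbbm{i})}$ over $F(\mathbbm{i})$, and correctly extracting the Galois-invariance $\Nrd(g)\in F^{\times}$ from the combined relations $J^{-1}gJ=g^{\tau}$ and $I^{-1}gI=g^{\sigma}$ restricted to block-diagonal $g$; everything else is a direct transcription from Proposition~\ref{Satake'sresult1} and Lemma~\ref{Satake'sresult4}.
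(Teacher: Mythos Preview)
Your proposal is correct and follows exactly the route the paper intends: the paper gives no separate proof of this proposition, declaring it ``immediate'' from the paragraph preceding Lemma~\ref{Satake'sresult4} together with Lemma~\ref{Satake'sresult4} itself, and what you have written is precisely the computation that justifies that word. Your identification of the block--diagonal part of $\mathbb{D}_4$ with $D_{F(\mathbbm{i})}$ via Theorem~\ref{Satake'sresult2}(3), the determinant calculation $\det A_{11}=\Nrd(g)$, and the passage from $\det A_{11}=\det A_{22}=t^{-1}$ to $\Nrd(g)\,t=1$ are all correct and constitute exactly the unpacking the paper suppresses.
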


\subsection{}
In the rest of this subsection, we assume that $\mathbbm{j}^2$ is a uniformizer  of $F$.
\begin{lemma}\label{thedecompositionFF2}
Under the conditions at the beginning of Section \ref{Satakereallyresults}, the quotient group $F^{\times}/{(F^{\times})^2}$ is represented by the set $\left\{ 1, -\alpha, \beta, -\alpha\beta\right\}$. \footnote{Notice that if $F$ is  a  non-archimedean  local field  of  residue characteristic $2$, then the cardinality of the quotient group  $F^{\times}/{(F^{\times})^2}$  is much larger than $4$ (\emph{cf}. \cite[p.162, Corollary 2.23]{Lam}).}
\end{lemma}
\begin{proof}
By the second part of  Proposition \ref{propositionforskewher}, and \cite[p. 155, Proposition 2.9]{Lam}, $-1 \in \nnn_{F(\mathbbm{i})/F}(F(\mathbbm{i})^{\times})$, or $ -1 \in \nnn_{F(\mathbbm{k})/F}(F(\mathbbm{k})^{\times})$. In the first case,  $-\alpha=-1 \cdot \alpha \in \nnn_{F_1/F}(F_1^{\times})$.  So $F^{\times}=\langle \nnn_{F_1/F}(F_1^{\times}),-\alpha\beta\rangle$, and $F^{\times}= \nnn_{F_1/F}(F_1)^{\times} \sqcup \big(-\alpha\beta \nnn_{F_1/F}(F_1)^{\times}\big)= (F^{\times})^2 \sqcup \big(-\alpha (F^{\times})^2\big) \sqcup \big(-\alpha\beta (F^{\times})^2\big) \sqcup \big(\beta(F^{\times})^2\big)$. The proof of the second case is similar.
\end{proof}
\begin{lemma}\label{therepsentativesof}
Let $\Upsilon=\left\{(1, 1), (-\alpha, \mathbbm{i}^{-1}), (\beta, \sqrt{-\beta}^{-1}), (-\alpha\beta, \mathbbm{i}^{-1}\sqrt{-\beta}^{-1})\right\}$, and let $\Xi$ be a coset representatives of $\mathbb{SL}_1(\mathbb{D}_{4})/{[\mathbb{SL}_1(\mathbb{D}_{4}), \mathbb{SL}_1(\mathbb{D}_{4})]}$.
  Then the canonical mapping induced by the exact sequence  in  Proposition \ref{Satake'sresult1} from
$T = \{ \omega \varsigma\mid   \omega \in \Xi, \varsigma \in \Upsilon \}$ to $\Ua(V, \langle, \rangle)/{[\Ua(V, \langle, \rangle), \Ua(V, \langle, \rangle)]}$ is surjective.
\end{lemma}
\begin{proof}
Results of Proposition \ref{Satake'sresult1} show that
$$\Ua(V, \langle, \rangle)/{[\Ua(V, \langle, \rangle), \Ua(V, \langle, \rangle)]} \simeq C^+(V, \langle, \rangle)/{\big([ C^+(V, \langle, \rangle), C^+(V, \langle, \rangle)]F^{\times}\big)}$$
 by identifying  $F^{\times}$ with a subgroup of $C^+(V, \langle, \rangle)$ via  the exact sequence there.
  Now let $N=\{ (t,k) \in C^+(V, \langle, \rangle)\mid k\in K^{\times}, t\in F^{\times} \textrm{ such that }\nnn_{K/F}(k)t^2=1\}$
  be a subgroup of $C^{+}(V, \langle, \rangle)$. The field extension $K$ of $F$ contains $F(\mathbbm{i})$ and $F(\sqrt{-\beta})$, so that
 $\nnn_{K/F}(K^{\times}) \supseteq \big(\nnn_{F(\mathbbm{i})/F}(F(\mathbbm{i})^{\times})\big)^2 \cup  \big(\nnn_{F(\sqrt{-\beta})/F}(F(\sqrt{-\beta})^{\times})\big)^2 \supseteq (F^{\times})^2;$
  thus every element $g\in C^+(V, \langle, \rangle)$ can be written in the form $g=(t_1, k_1g_1)$ for $k_1\in K^{\times}$, $t\in F^{\times}$ with $\nnn_{K/F}(k_1)t_1^{2}=1$, and $g_1 \in \mathbb{SL}_1(\mathbb{D}_4)$. This in turn shows that $\mathbb{SL}_1(\mathbb{D}_4)$ is a normal subgroup of $C^+(V, \langle, \rangle)$, and $C^+(V, \langle, \rangle)=\mathbb{SL}_1(\mathbb{D}_4) N$. As a consequence, one obtains
   $$[C^+(V, \langle, \rangle),  C^+(V, \langle, \rangle)] \simeq [ \mathbb{SL}_1(\mathbb{D}_4) N,  \mathbb{SL}_1(\mathbb{D}_4)] =[ \mathbb{SL}_1(\mathbb{D}_4),  \mathbb{SL}_1(\mathbb{D}_4)] \cdot  [ N,  \mathbb{SL}_1(\mathbb{D}_4)]$$
  %\footnote{$[\mathbb{SL}_1(\mathbb{D}_4) N,
  % \mathbb{SL}_1(\mathbb{D}_4)N]=[N, N] \cdot [\mathbb{SL}_1(\mathbb{D}_4) N, \mathbb{SL}_1(\mathbb{D}_4)]$. Given $n\in N, g, h\in \mathbb{SL}_1(\mathbb{D}_4)$, the commutator
   %$[ng, h]=nghg^{-1}n^{-1} h^{-1}$$=(ngn^{-1})\cdot(nhn^{-1})\cdot(ng^{-1}n^{-1})\cdot(nh^{-1}n^{-1})\cdot \big( nhn^{-1}h^{-1}\big)$$=[ngn^{-1}, nhn^{-1}] \cdot [n,h]$.}
   and
   $$C^+(V, \langle, \rangle)/{\big([C^+(V, \langle, \rangle), C^+(V, \langle, \rangle)] F^{\times}\big)} \simeq \mathbb{SL}_1(\mathbb{D}_4) N/{\big([ \mathbb{SL}_1(\mathbb{D}_4),  \mathbb{SL}_1(\mathbb{D}_4)] F^{\times} [N, \mathbb{SL}_1(\mathbb{D}_4)]\big)}.$$
   By observation, the group $N/{(\mathbb{SL}_1(K) F^{\times})}$  is represented by the set $\Upsilon$;
  this ensures the result.
\end{proof}

\begin{lemma}\label{therepsentativesof2}
Let $\Xi_1$ be a coset representatives of $\mathbb{SL}_1(\mathbb{D}_{F(\mathbbm{i})})/{[\mathbb{SL}_1(\mathbb{D}_{F(\mathbbm{i})}), \mathbb{SL}_1(\mathbb{D}_{F(\mathbbm{i})})]}$. Then the canonical mapping  from $T_1 = \{ \omega \varsigma \mid   \omega \in \Xi_1, \varsigma=(1, 1), (-\alpha, \mathbbm{i}^{-1}), (\beta, \sqrt{-\beta}^{-1}), (-\alpha\beta, \mathbbm{i}^{-1}\sqrt{-\beta}^{-1}) \}$ to $\Ua(V_1, \langle, \rangle_1)/{[\Ua(V_1, \langle, \rangle_1), \Ua(V_1, \langle, \rangle_1)]}$ is surjective.
\end{lemma}
\begin{proof}
The proof is similar  to that of  Lemma \ref{therepsentativesof}.
\end{proof}
\subsection{ Moore cohomology }\label{TheHochschildSerreSpectralSequence}

For the sake of completeness we recall some aspects of spectral sequence of  topological group extensions developed by Moore in \cite{Mo2} and \cite{Mo}. Our  purpose is to extend the classical five inflation-restriction exact sequence to six terms in such case, so that one can use it freely in next sections.

\subsubsection{Moore cohomology}
Let $G$ be a locally profinite group, and let $A$ be a finite abelian  group on which $G$ acts \emph{trivially}. Let $\Ha^{\ast}(G, A)$ be the cohomology groups as defined in \cite{Mo2} by Moore.
Now let $K$ be a normal closed subgroup of $G$. To filter $\Ha^{\ast}(G,A)$ with $G \supseteq K \supseteq 0$, Moore introduced the following standard filtering $\{ L_j\}$ on the complex $C^{\ast}(G,A)$:
\begin{itemize}
\item[(1)] $L_j = \sum_{n=0}^{\infty} L_j \cap C^n(G,A)$.
\item[(2)] $L_j \cap C^n(G,A) =0$, if $j >n$, and $L_j \cap C^n(G,A) =0$, if $j < 0$.
\item[(3)] For $0 \leq j \leq n$,   $L_j \cap C^n(G,A)$ is the group of all elements $f \in C^n(G,A)$ such that $f(s_1, \cdots, s_{n})$ depends on $s_1, \cdots, s_{n-j}$, and the cosets $s_{n-j+1} K, \cdots, s_n K$.
\end{itemize}

It is clear that $\delta(L_j) \subseteq L_j$. Let $Z_r^j$ denote the  preimage of $L_{j+r}$ in $L_j$ and  $E_r^j= Z_r^j/{\big(Z_{r-1}^{j+1} + \delta (Z_{r-1}^{j+1-r})\big)}$. The group $E_{r}^{j,i}$ is just the image of $Z_r^j \cap C^{i+j}(G,A)$ in $E_r^j$.
By definition, one has $E_1^{j,i} \simeq \Ha^{i+j}(L_j/{L_{j+1}})$.  To use the spectral sequence like \cite{HS},  let  $C^j\big(G/K, C^i(K,A)\big)$ be the group of `` normalized ''  $j$-cochains $f$'s on $G/K$ with values in $C^i(K,A)$ subject to the condition that $f( \overline{s_1}, \cdots, \overline{s_j})[t_1, \cdots, t_i]$ defines a \emph{Borel} function from $(G/K)^j \times (K)^i $ to $A$. On $C^j \big(G/K, C^{\ast} (K,A)\big)$, one introduces the natural coboundary operators $\delta_K^{\ast}$, and obtains the $i$-th cohomology group $\Ha^i\big(C^j(G/K$,$ C^{\ast}(K,A)\big)$.
In \cite[ p.48, Lemma 1.1]{Mo2}, it is shown that  $E_1^{j,i}$ is isomorphic to $\Ha^i\big(C^j(G/K, C^{\ast}(K,A))\big)$. Notice that a Borel homomorphism of local compact groups is also continuous, so we have $\Ha^1(K,A) \simeq \Hom(K,A)$, and   $\Ha^1(K,A)$ becomes a topological group when equipped with a  canonical Borel structure.  Along with \cite{HS}  but taking   the Borel structure  into account, Moore proved the following results:
\begin{theorem}[{\cite[p.49 and  p.52, Theorem 1.1]{Mo2}}]
\begin{itemize}
\item[(1)] $E_1^{j,0} \simeq C^j(G/K, A)$, $E_1^{0,i} \simeq \Ha^i(K,A)$.
\item[(2)] $ E_1^{j,1} \simeq C^j\big(G/K, \Ha^1(K, A)\big),$ and $ E_2^{j,1} \simeq  \Ha^j\big(G/K, \Ha^1(K,A)\big)$.
\end{itemize}
\end{theorem}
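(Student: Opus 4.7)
My plan is to deduce everything from the identification $E_1^{j,i}\simeq \Ha^i\big(C^j(G/K,C^\ast(K,A))\big)$ established in the preceding lemma, and then simply carry out the cohomology computations in the low degrees $i=0,1$, taking care at each step that the resulting identifications preserve the Borel structures inherent in Moore's framework. Throughout, I would use that $A$ is finite with trivial $G$-action (hence trivial $K$-action).

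For part (1), I would argue directly. When $i=0$, the complex $C^\ast(K,A)$ truncates at $C^0(K,A)=A$ with zero differential into degree $1$ contributing (since $\delta_0\colon C^0 \to C^1$ is $\delta a(t)=t\cdot a - a=0$ by triviality of the action), so the $H^0$ in the $K$-direction is just $A$ and one obtains $E_1^{j,0}\simeq C^j(G/K,A)$. When $j=0$, the normalized $0$-cochains on $G/K$ with values in $X$ reduce to $X$ itself, so the lemma gives $E_1^{0,i}\simeq \Ha^i\big(C^\ast(K,A)\big)=\Ha^i(K,A)$ immediately.

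The main work is for part (2). I would fix $j$ and compute the $K$-cohomology in degree $1$ of the subcomplex $C^j(G/K,C^\ast(K,A))$. A $1$-cochain here is a Borel function $f(\overline{s}_1,\dots,\overline{s}_j;t)$ whose $K$-coboundary, because the action is trivial, simplifies to
\[
\delta_K f(\overline{s}_1,\dots,\overline{s}_j; t_1,t_2)=f(\overline{s}_1,\dots;t_2)-f(\overline{s}_1,\dots;t_1t_2)+f(\overline{s}_1,\dots;t_1).
\]
Thus cocycles are precisely Borel families of Borel homomorphisms $K\to A$, and coboundaries vanish; together with the identification $\Ha^1(K,A)\simeq \Hom(K,A)$ as a Borel group, this yields $E_1^{j,1}\simeq C^j\big(G/K,\Ha^1(K,A)\big)$. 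For the $E_2$ claim, I would verify that under this identification the induced $d_1$-differential on $E_1^{\ast,1}$ matches the Moore coboundary on $G/K$-cochains valued in $\Ha^1(K,A)$ equipped with trivial action, by a direct unwinding of the definitions of $Z_1^j$ and $\delta$ in the filtration; taking cohomology then produces $E_2^{j,1}\simeq \Ha^j\big(G/K,\Ha^1(K,A)\big)$.

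The subtle point, and the one genuine obstacle, is measurability: I must check that the set-theoretic identification sending $f\in C^j(G/K,C^1(K,A))$ to the function $(\overline{s}_1,\dots,\overline{s}_j)\mapsto f(\overline{s}_1,\dots;\cdot)\in \Hom(K,A)$ lands in the Borel cochains on $G/K$ valued in $\Hom(K,A)$, and conversely that every Borel cochain with values in $\Hom(K,A)$ lifts to a jointly Borel function on $(G/K)^j\times K$. This is precisely the technical content that distinguishes Moore's setting from the purely algebraic Hochschild--Serre proof; I would invoke Moore's description of the Polish/Borel structure on $\Hom(K,A)$ (so that evaluation and pointwise operations are Borel) to legitimize both directions, after which the rest of the argument reduces to the classical computation recorded in \cite{HS}.
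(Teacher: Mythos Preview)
Your sketch is essentially correct and follows the classical Hochschild--Serre computation adapted to Moore's Borel framework. By contrast, the paper does not give a proof of this theorem at all: its ``proof'' is a bare citation to \cite[Page 49 and Page 52, Theorem 1.1]{Mo2}. So there is no meaningful comparison of approaches to be made---you are supplying an argument where the paper simply defers to Moore.

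One slip worth correcting: when you pass to $E_2^{j,1}$ you write that the $d_1$-differential matches the Moore coboundary on $G/K$-cochains valued in $\Ha^1(K,A)$ ``equipped with trivial action.'' That is not right. The action of $G$ on the coefficient module $A$ is trivial by hypothesis, but the induced action of $G/K$ on $\Ha^1(K,A)\simeq \Hom(K,A)$ comes from conjugation on $K$: $(\overline{g}\cdot\phi)(k)=\phi(g^{-1}kg)$. This action is generally nontrivial (indeed, the paper's six-term sequence features $\Ha^1(K,A)^{G}$ precisely because of it). Your identification of $d_1$ should therefore be with the coboundary for this conjugation action, not the trivial one. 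With that adjustment, the rest of your outline---including the measurability caveat you correctly flag as the genuinely non-algebraic ingredient---is sound.
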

According to \cite[pp. 52-53]{Mo2}, the composed map
$\Ha^j(G/K, A) \simeq E_2^{j,0} \longrightarrow E_{\infty}^{j,0} \hookrightarrow \Ha^j (G,A)$ is  the inflation from $G/K$ to $G$, $j=1,2$ and the composed map
$\Ha^i(G,A) \twoheadrightarrow E_{\infty}^{0,i} \hookrightarrow E_2^{0,i} \simeq H^i(K,A)$
is the restriction from $G$ to $H$. By \cite[p.130,  Remark]{HS} we have the following inflation-restriction sequence
\begin{equation}\label{5termsequence}
0 \longrightarrow \Ha^1(G/K, A) \stackrel{inf_1}{\longrightarrow} \Ha^1(G,A) \stackrel{res_1}{\longrightarrow} \Ha^1(K,A)^G \stackrel{d_2}{\longrightarrow} \Ha^2(G/K, A) \stackrel{inf_2}{\longrightarrow} \Ha^2(G,A).
\end{equation}
For later use, let us  extend the above long exact sequence  to six terms. Now let  $\Ha^2(G,A)_1$  denote the kernel of the  restriction from $\Ha^2(G,A)$ to $\Ha^2(K,A)$, which is isomorphic to $\Ha^2(L_1)$. Recall that the coimage of   $d_2$ is $E_{\infty}^{2,0}$, which is isomorphic with $\Ha^2(L_2)$. Note that $\Ha^2(L_1)/{\Ha^2(L_2)}$ is isomorphic with $E_{\infty}^{1,1}$. Now $\delta: E_2^{1,1} \longrightarrow E_2^{3,-1}$ is null, so there is an embedding $E_{\infty}^{1,1}  \hookrightarrow E_2^{1,1} \simeq \Ha^1\big(G/K, \Ha^{1}(K,A)\big)$. Hence we conclude that the following exact sequence of six terms holds:
\begin{equation}\label{6termsequence}
0 \longrightarrow \Ha^1(G/K, A) \stackrel{inf_1}{\longrightarrow} \Ha^1(G,A) \stackrel{res_1}{\longrightarrow} \Ha^1(K,A)^{G}\stackrel{d_2}{\longrightarrow} \Ha^2(G/K, A) \stackrel{inf_2}{\longrightarrow} \Ha^2(G,A)_1 \stackrel{p}{\longrightarrow} \Ha^{1}\big(G/K, \Ha^1(K,A)\big).
\end{equation}
Remark that the above sequence coming from  spectral sequence  is  functorial  over the pair $(G ,K)$.
\subsubsection{Explicit expression}\label{Explicitexpression}For convenience, let us describe explicitly the  map $p$ in terms of  cocycles by following \cite{HS}.  Let $[c] \in \Ha^2(G,A)_1$ such that the restriction of the cocycle $c$ to $K \times K$ is trivial. By definition, we have
\begin{equation}\label{2cyccle}
c(s_2, s_3)-c(s_1s_2, s_3)+ c(s_1, s_2s_3) -c(s_1, s_2)=0,  \qquad s_1, s_2, s_3 \in G
\end{equation}
We choose a set of  representatives $\Omega=\{ s^{\ast} \in G\}$ for $G/K$ such that the restriction of the morphism $G \longrightarrow G/K$ to $\Omega$ is a Borel isomorphism, and $1_G \in \Omega$. For $s= s^{\ast} t$, $t\in K$, define $h(s)=c(s^{\ast}, t)$.
Note that
\begin{equation}\label{3cyccle}
\delta_1 h(s, t')= h(t')-h(st')+h(s)=-c(s^{\ast}, tt')+ c(s^{\ast}, t),   \qquad t' \in K
\end{equation}
Consider
$c^{\ast}(s, s_1)=c(s, s_1) +\delta_1 h(s, s_1)$, for $s=s^{\ast} t, s_1 \in G$.
Then $c^{\ast}(s, t')= (c +\delta_1 h)(s, t')$ is zero by (\ref{2cyccle}) and (\ref{3cyccle}). As $\delta_2 c^{\ast}= \delta_2 (c+\delta_1 h)=0$, we obtain
$ c^{\ast} (s^{\ast}, t)-c^{\ast}(s_1 s^{\ast}, t)+ c^{\ast}(s_1, s^{\ast} t)- c^{\ast}(s_1, s^{\ast})=0$. Hence $c^{\ast}(s_1, s)= c^{\ast}(s_1, s^{\ast} t)= c^{\ast}(s_1, s^{\ast})$, meaning that $c^{\ast}(s_1, s)$
depends only on $s_1$ and the coset $sK$. By replacing  $c$ with $c^{\ast}$, the map $p$ is just given by
$p([c^{\ast}]):  sK \longmapsto \big( t  \longrightarrow c^{\ast} (t, s)\big)$, for  $t\in K, s\in G.$

\section{The criterion}\label{reductions}
We keep the notations of Section \ref{notations}. Throughout this section,  we will let $\Lambda_{\Gamma}= \left\{\lambda_1(g_1)=\lambda_2(g_2)^{-1}\mid (g_1, g_2)\in \Gamma\right\}.$
Then there exists a short exact sequence
$0 \longrightarrow  \Ua(W_1) \times \Ua(W_2) \longrightarrow   \Gamma      \stackrel{\lambda}{\longrightarrow}   \Lambda_{\Gamma} \longrightarrow  1$,
where $\lambda$ sends $ (g_1,g_2) $ to  $ \lambda_1(g_1)$. Applying the given map $\iota$ in Section \ref{statementofresult}, we obtain
$0 \longrightarrow \iota\big(\Ua(W_1) \times \Ua(W_2)\big)  \longrightarrow    \iota(\Gamma)   \longrightarrow   \iota(\Gamma)/{\iota\big(\Ua(W_1) \times \Ua(W_2)\big)}   \longrightarrow 1.$
 By an abuse of notation, we write $\iota(\Lambda_{\Gamma})$ for $\iota(\Gamma)/{\iota\big(\Ua(W_1) \times \Ua(W_2)\big)}$ in the following. By  Hochschild-Serre spectral sequence(\emph{cf}. Section \ref{TheHochschildSerreSpectralSequence}),  there exists  the following long exact sequence:
$$0 \longrightarrow \Hom\big(\iota(\Lambda_{\Gamma}), \, \mu_8\big) \longrightarrow \Hom\big(\iota(\Gamma), \, \mu_8\big)  \longrightarrow  \Hom\big(\iota(\Ua(W_1) \times \Ua(W_2)), \, \mu_8\big)^{i(\Gamma)} \longrightarrow
\Ha^2\big( \iota(\Lambda_{\Gamma}), \, \mu_8\big) $$
$$\longrightarrow \Ha^{2}\big(\iota(\Gamma), \, \mu_8\big)_1 \longrightarrow \Ha^1\big( \iota(\Lambda_{\Gamma}), \, \Ha^1( \iota(\Ua(W_1) \times \Ua(W_2)), \, \mu_8) \big) $$
Now let $[c_{Rao}]\in \Ha^2\big(\Sp(W), \mu_{8}\big)$ be the unique nontrivial class of order $2$ and $[c]$  its restriction to $\iota(\Gamma)$. By Theorem \ref{scindagedugroupeR0}, the restriction of $[c]$ to $\iota\big(\Ua(W_1) \times \Ua(W_2)\big)$ is trivial apart from the exceptional case mentioned there. \\

\begin{lemma}\label{onesubgroup}
Assume that  $\Gamma_1$ is a closed subgroup of  $\Gamma$  satisfying the following four conditions:
\begin{itemize}
\item[($\Ca1$)] $\lambda: \Gamma_1 \longrightarrow \Lambda_{\Gamma}$  is surjective;
\item[($\Ca2$)] $\Hom(\iota(\Gamma_1), \mu_{8}) \longrightarrow \Hom\bigg(\iota\big(\Ua(W_1) \times \Ua(W_2)\big)_0, \mu_{8}\bigg)^{\iota(\Gamma_1)}$ is surjective,  where $\iota\big(\Ua(W_1) \times \Ua(W_2)\big)_0= \iota(\Gamma_1) \cap \iota(\Ua(W_1) \times \Ua(W_2))$;
\footnote{ When $D=F$ or $D=\mathbb{H}$, the kernel of $\iota$ is just $\left\{ (1, 1), (-1, -1) \right\}$; when $D=E$, the kernel-$\ker \iota$- belongs to the center of $\Ua(W_1) \times \Ua(W_2)$. If $ \ker \iota$ belongs to $\Gamma_1$,   $\iota(\Gamma_1) \cap \iota\big(\Ua(W_1) \times \Ua(W_2)\big)= \iota\big(\Gamma_1 \cap (\Ua(W_1) \times \Ua(W_2))\big)$. In the general case, one can add $ \ker \iota$ to $\Gamma_1$ to avoid the problem.}
\item[($\Ca3$)] Under the restriction map $\Res: \Ha^2\big( \iota(\Gamma), \mu_8\big) \longrightarrow \Ha^2\big(\iota(\Gamma_1), \mu_8\big)$  the image of $ [c] $  is trivial;
\item[($\Ca4$)] For each $\nu=1,2$, there exists  a set $ \Omega_{\nu}$ of representatives   for  $\iota(\Ua(W_{\nu}) )/[\iota(\Ua(W_{\nu})), \iota(\Ua(W_{\nu}) )]$ such that  the  set $ [\Omega_{\nu}]^{\iota(\Gamma_1)}$ belongs to the parabolic subgroup $ P(Y_{\nu})$  for some Lagrangian vector space $ Y_{\nu}$ of  $ W$.
\end{itemize}
 Then the exact sequence  $1 \longrightarrow \mu_8 \longrightarrow \overline{\Gamma} \longrightarrow \iota(\Gamma) \longrightarrow 1$  splits  over $\iota(\Gamma)$.
\end{lemma}
\begin{proof}
$\iota(\Gamma)$ is identified with $\iota(\Gamma_1) \cdot \iota\big(\Ua(W_1) \times \Ua(W_2)\big)$, so
$\iota(\Gamma_1)/{\iota\big(\Ua(W_1) \times \Ua(W_2)\big)_0} \simeq \iota(\Lambda_{\Gamma})$. Applying the six-term exact sequence  (\ref{6termsequence}) to     the following   commutative diagram
\[
\begin{array}{ccccccccccc}
1 & \longrightarrow & \iota\big(\Ua(W_1) \times \Ua(W_2)\big)_0& \longrightarrow & \iota(\Gamma_1)  & \longrightarrow & \iota( \Lambda_{\Gamma})& \longrightarrow  & 1    \\
  &                 &   \downarrow                     &                 &\downarrow &                 & \downarrow      &                  &      \\
1 & \longrightarrow & \iota\big(\Ua(W_1) \times \Ua(W_2) \big)            & \longrightarrow & \iota(\Gamma)    & \longrightarrow & \iota(\Lambda_{\Gamma}) & \longrightarrow  & 1
\end{array}
\]
 yields  a diagram  of long exact sequences:
\[
\begin{array}{cccccccccccccccc}
 0 &\longrightarrow & \Ha^2\big(\iota(\Lambda_{\Gamma}), \mu_8\big)& \stackrel{\alpha_1}{\longrightarrow }& \Ha^2\big(\iota(\Gamma_1), \mu_8\big)_1  & \longrightarrow  & \Ha^1\Big(\iota(\Lambda_{\Gamma}), \Ha^1\big( \iota(\Ua(W_1)\times\Ua(W_2))_0,\mu_8\big)\Big)   & \longrightarrow &  \cdots  \\
   &                & \uparrow                    &                  &\uparrow    l           &                 & \uparrow
 &                &             \\
 \cdots &\longrightarrow & \Ha^2\big(\iota(\Lambda_{\Gamma}), \mu_8\big)& \stackrel{\alpha}{\longrightarrow }& \Ha^2\big(\iota(\Gamma), \mu_8\big)_1     & \stackrel{p}{\longrightarrow}  & \Ha^1\bigg( \iota(\Lambda_{\Gamma}), \Ha^1\Big(\iota\big(\Ua(W_1) \times\Ua(W_2)\big),\mu_8\Big)\bigg)   & \longrightarrow &  \cdots
 \end{array}
\]
It is  possible to divide the  horizontal arrow $p$ into $p_1$, $p_2$, where
$p_{\nu}: \Ha^2( \iota(\Gamma), \,\mu_8)_1 \longrightarrow \Ha^1\big(\iota(\Lambda_{\Gamma}), \Ha^1(\iota(\Ua(W_{\nu})), \mu_8)\big)$. For each $\nu=1, 2$,
we let  $c_{\nu}$ be  a $2$-cocycle constructed in \cite[p. 55, Th\'eor\`eme]{MVW}, associated to $Y_{\nu}$ and $\psi$, which is a Borel function from $\Sp(W) \times \Sp(W)$ to $\mu_8$. Under  the restriction from $\Ha^2\big(\iota(\Gamma), \, \mu_{8}\big)$ to $\Ha^2\big(\iota(\Ua(W_1) \times \Ua(W_2)), \, \mu_{8}\big)$, the image of $[c_{\nu}]$ is trivial,  meaning that $[c_{\nu}]$ lies in  $\Ha^2\big(\iota(\Gamma), \,  \mu_8\big)_1$. So there is a Borel function $f$ from $\iota\big(\Ua(W_1) \times \Ua(W_2)\big)$ to $\mu_{8}$ such that
$c_{\nu}(t_1, t_2)= f_{\nu}(t_1 t_2) f_{\nu}(t_1)^{-1} f_{\nu}(t_2)^{-1}$,  for  $t_1, t_2 \in \iota\big(\Ua(W_1) \times \Ua(W_2)\big)$.
Note that when  $t_0\in \Omega_{\nu} \subseteq P(Y_{\nu})$, $t \in \iota\big(\Ua(W_1) \times  \Ua(W_2)\big)$, we even have
$f_{\nu}(t_0t)= f_{\nu}(t_0)f_{\nu}(t)= f_{\nu}(tt_0)$. According to \cite[p.42, Definition 1.2]{Mo2}, we can choose a set of representatives $\Delta=\left\{ s^{\ast} \in \iota(\Gamma_1)\right\}$ for $\iota(\Gamma)/{\iota\big(\Ua(W_1) \times \Ua(W_2)\big)}$ subject to the conditions that the restriction of $\iota(\Gamma) \longrightarrow \iota(\Gamma)/{\iota\big(\Ua(W_1) \times \Ua(W_2)\big)}$ to $\Delta$ is a Borel isomorphism, and $\Delta$ contains the identity element of $\iota(\Gamma)$ or $\iota(\Gamma_1)$.  Now let $f_{\nu}$   extend to a Borel  function of $\iota(\Gamma)$ by  taking the trivial value outside $\iota\big(\Ua(W_1) \times \Ua(W_2)\big)$. We replace $c_{\nu}$ with $c_{\nu}'= c_{\nu} \cdot \delta_1 f_{\nu}$,  i.e.  $c_{\nu}'(s_1, s_2)= c_{\nu}(s_1, s_2)f_{\nu}(s_1) f_{\nu}(s_2) f_{\nu}(s_1s_2)^{-1}$ for $ s_1, s_2 \in \iota(\Gamma)$. It is immediate that  $ c_{\nu}'( t, t')=1$ for $t, t' \in \iota\big(\Ua(W_1) \times \Ua(W_2)\big)$. Following Section \ref{Explicitexpression}, we define a Borel function $h$ of $\iota(\Gamma)$ as
$ h(s)= c_{\nu}'(s^{\ast}, t)$, for  $ s=s^{\ast} t \in \iota(\Gamma)$ with $s^{\ast}\in \Delta$, $t\in \iota\big(\Ua(W_1) \times \Ua(W_2)\big)$, and consider  the cocycle $c_{\nu}^{\ast}=c_{\nu}' \cdot \delta_1 h$; then the map $p_{\nu}$ is just given by
$p_{\nu}([c_{\nu}]): s^{\ast}  \longmapsto ( t  \longrightarrow c_{\nu}^{\ast} (t, s^{\ast}))$, for $t\in \iota(\Ua(W_{\nu}) ), s^{\ast}\in \Delta$.
Notice that $p_{\nu}\big([c_{\nu}]\big)(s^{\ast})$ belongs to $\Hom\big(\iota(\Ua(W_{\nu})), \mu_8\big)$, so it depends only on the values at  those $ t_0 \in \Omega_{\nu}$. Now
$c_{\nu}^{\ast}(t_0, s^{\ast})= c_{\nu}'(t_0, s^{\ast}) [\delta_1 h(t_0, s^{\ast})]=c_{\nu}'(t_0, s^{\ast}) c_{\nu}'(s^{\ast}, (s^{\ast})^{-1}t_0s^{\ast})^{-1}=f_{\nu}(t_0)f_{\nu}^{s^{\ast}}(t_0)^{-1}$, a coboundary with respect to
$\Ha^1\big(\iota(\Gamma), \Hom(\iota(\Ua(W_{\nu}) ), \mu_8)\big)$.  Hence under the map $p_{\nu}$, the image of $[c_{\nu}]$ is trivial, and $[c_{1}]=[c_{2}]=\alpha([d])$  for  some $[d] \in \Ha^2(\iota(\Lambda_{\Gamma}), \mu_8)$.
By assumption, $l\big( [c_{\nu}]\big)=0=\alpha_1([d])$; as  $\alpha_1$ is injective,  we get $[d]=0$, and then $[c_{\nu}]=0$,   so the result follows.
\end{proof}

\begin{remark}\label{invariant}
Keep the above  notations. The  result also holds if we replace the above $(\Ca3)$ and $(\Ca4)$ by the following condition:
 \begin{itemize}
 \item[($\Ca 3\tfrac{1}{2})$]  For each $\nu=1,2$, there exists  a set $ \Omega_{\nu}$ of representatives   for  $\iota(\Ua(W_{\nu}) )/[\iota(\Ua(W_{\nu})), \iota(\Ua(W_{\nu}) )]$ such that
 the  set $ [\Omega_{1}]^{\iota(\Gamma_1)}$ belongs to $ P(Y_{1})$  for some Lagrangian vector space $ Y_{1}$ of  $ W$, and the restriction of $[c]$ to the new group generated by $\iota(\Gamma_1)$ and $\iota(\Omega_2)$ is trivial.
 \end{itemize}
\end{remark}
\begin{proof}
Under the above condition, it can also be shown that $p([c])$ is trivial in $ \Ha^1\Big( \iota(\Lambda_{\Gamma}), \Ha^1\big(\iota(\Ua(W_1) \times\Ua(W_2)),\mu_8\big)\Big) $, and the remaining proof  is the same as above.
\end{proof}
In the following Sections \ref{TheproofofthemaintheoremI}---\ref{TheproofofthemaintheoremVI}, we shall prove Theorem A. Our main tool is the above lemma \ref{onesubgroup}, and it reduces to find certain suitable subgroup $\Gamma_1$ of $\Gamma$ satisfying the desired conditions ($\Ca1$)---($\Ca4$), or the variant ones.  In the rest of this paper, we shall frequently come back to these conditions without  further illustration.
\section{The proof of  the main theorem-Part I.}\label{TheproofofthemaintheoremI}
In this section, we  follow the notations of Section \ref{reductions} and  prove Theorem A (\emph{cf}. Section  \ref{statementofresult}) in  one major type of  cases, where either $W_1$ or $W_2$ is  a hyperbolic space over $D$.

\begin{proposition}\label{thespittingcases}
 When $W_1$ or $W_2$ is a hyperbolic space over $D$, Theorem A holds.
 \end{proposition}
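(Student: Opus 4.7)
Without loss of generality we may assume $W_1$ is hyperbolic over $D$ (the tensor construction is symmetric in the two factors, so the case where $W_2$ is hyperbolic reduces to this one). Fix a polarization $W_1 = X_1 \oplus X_1^{\ast}$ into maximal totally isotropic $D$-subspaces. Using the tensor form $\langle, \rangle = \Trd\big(\langle, \rangle_1 \otimes \tau(\langle, \rangle_2)\big)$ and the isotropy of $X_1$, one checks at once that
$$Y := X_1 \otimes_D W_2$$
is a Lagrangian of $(W, \langle, \rangle)$. The plan is to apply Lemma \ref{onesubgroup} to the ``Siegel--type'' subgroup
$$\Gamma_1 := \{(g_1, g_2) \in \Gamma \mid g_1(X_1) = X_1\} \cdot \langle (-1,-1) \rangle,$$
whose image $\iota(\Gamma_1)$ lies entirely inside the Siegel parabolic $P(Y) \subseteq \Sp(W)$ stabilising $Y$.

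Conditions $(\C1)$ and $(\C3)$ are then nearly free from this setup. For $(\C1)$, scaling a single basis vector of $X_1$ by $t \in F^{\times}$ (and extending by the identity on $X_1^{\ast}$ and any complementary subspace) yields an element of $\GU(W_1)$ of similitude $t$ that preserves $X_1$; pairing with a compensating element of $\GU(W_2)$ gives elements of $\Gamma_1$ realising every value in $\Lambda_{\Gamma}$. For $(\C3)$, the restriction of the Rao cocycle $[c]$ to any Siegel parabolic $P(Y)$ is cohomologically trivial, because the Schr\"odinger model on $L^{2}(W/Y)$ realises a genuine (un-cocycled) representation of $P(Y) \supseteq \iota(\Gamma_1)$. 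For $(\C4)$ with $\nu = 2$, the subgroup $\iota(\U(W_2)) = 1 \otimes \U(W_2)$ already preserves $Y$, and $\iota(\Gamma_1)$-conjugation acts through the second projection and keeps one inside $\iota(\U(W_2))$, so $\Omega_2 = \U(W_2)$ and $Y_2 = Y$ work. For $\nu = 1$, combining Theorem \ref{the commutantofunitarygroups}, Proposition \ref{canonicalsurjectivemap}, and Proposition \ref{thecommutantofdimension1} lets us reduce the abelianisation of $\U(W_1)$ to representatives of the form $\diag(a, \overline{a}^{-1})$ living in the unitary group of a single hyperbolic plane $H = \Span\{e_1, e_1^{\ast}\}$, chosen with $e_1 \in X_1$ and $e_1^{\ast} \in X_1^{\ast}$. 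These Levi representatives stabilise $X_1$ and hence $Y$; and since $\iota(\Gamma_1)$-conjugation acts on $\U(W_1)$ through the first projection into the Siegel parabolic $P_{X_1} \subseteq \GU(W_1)$, which normalises the condition of preserving $X_1$, the entire orbit remains in $P(Y)$, so $Y_1 = Y$ is admissible.

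The main obstacle, and the step requiring the most care, is condition $(\C2)$. The approach I would take is to build an explicit Borel section of $\lambda: \iota(\Gamma_1) \twoheadrightarrow \Lambda_{\Gamma} \simeq F^{\times}$ from the scaling elements used for $(\C1)$, thereby realising $\iota(\Gamma_1)$ (up to the finite kernel $A_2$) as a semidirect product $\iota\big(\U(W_1) \times \U(W_2)\big)_0 \rtimes F^{\times}$. In such a split extension every $\iota(\Gamma_1)$-invariant homomorphism from the normal subgroup into $\mu_8$ admits the canonical extension that is trivial on the complementary factor, yielding the required surjectivity. Once $(\C1)$--$(\C4)$ are all established, Lemma \ref{onesubgroup} produces a splitting of $\overline{\Gamma} \to \iota(\Gamma)$, completing the proof of Proposition \ref{thespittingcases}.
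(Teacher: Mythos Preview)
Your overall strategy---placing $\iota(\Gamma_1)$ inside the Siegel parabolic $P(Y)$ attached to $Y=X_1\otimes_D W_2$ and then invoking Lemma~\ref{onesubgroup}---is the same as the paper's, and your treatment of $(\C1)$, $(\C3)$ and $(\C4)$ is fine. The genuine divergence, and the place where your argument breaks, is the choice of $\Gamma_1$ and the resulting verification of $(\C2)$.

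The paper (with $W_2$ hyperbolic) does \emph{not} take the full Siegel parabolic on the hyperbolic side. It restricts the $W_2$-component to the tiny multiplicative image $s^{\pm}(F^{\times})$ of the diagonal section $t_a^{\pm}=\diag(\pm 1,\pm a)$, so that $\Gamma_1=\{(g_1,g_2)\in\GU(W_1)\times s^{\pm}(F^{\times}):\lambda_1(g_1)\lambda_2(g_2)=1\}$. The point of this choice is that $s^{\pm}(F^{\times})\cap\U(W_2)=\{\pm1\}$, which is what the paper uses to assert that $\Gamma_1\cap\big(\U(W_1)\times\U(W_2)\big)$ collapses to $\{(1,1),(-1,-1)\}$ and hence that $\iota\big(\U(W_1)\times\U(W_2)\big)_0$ is trivial; condition $(\C2)$ is then vacuous. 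The existence of this multiplicative section is precisely what the hyperbolic hypothesis on one factor buys you.

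With your $\Gamma_1$ equal to the full parabolic on the $W_1$-side, one has instead
\[
\Gamma_1\cap\big(\U(W_1)\times\U(W_2)\big)=\big(P_{X_1}\cap\U(W_1)\big)\times\U(W_2),
\]
so $(\C2)$ is a substantive claim. Your proposed verification---realising $\iota(\Gamma_1)$ as a semidirect product $\iota\big(\U(W_1)\times\U(W_2)\big)_0\rtimes\Lambda_\Gamma$---requires a \emph{group-theoretic} section of $\iota(\Gamma_1)\twoheadrightarrow\Lambda_\Gamma$, not merely a Borel one. The scaling section $a\mapsto s_1(a)$ on the hyperbolic side $W_1$ is indeed multiplicative, but to land in $\Gamma_1$ you must pair it with $g_2(a)\in\GU(W_2)$ of similitude $a^{-1}$, and for the composite to be a homomorphism you need $a\mapsto g_2(a)$ to be multiplicative as well, i.e.\ a splitting of $1\to\U(W_2)\to\GU(W_2)\to\lambda_2(\GU(W_2))\to1$. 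There is no reason for this to exist when $W_2$ is not hyperbolic, so the semidirect-product step does not go through.

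The fix is exactly the paper's move: shrink $\Gamma_1$ by using the hyperbolic structure to place the one-parameter section on the hyperbolic factor, rather than allowing the full parabolic there. With that smaller $\Gamma_1$ the intersection with $\U(W_1)\times\U(W_2)$ becomes essentially $\{\pm(1,1)\}$ and $(\C2)$ is automatic; your arguments for $(\C1)$, $(\C3)$, $(\C4)$ then carry over with only cosmetic changes (in $(\C4)$ the $\Gamma_1$-conjugation is now by diagonal elements, which makes the stability of $\Omega_\nu$ even easier).
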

\begin{proof}
Without loss of generality,  we assume that $W_2 \simeq n_2H$ for  a hyperbolic plane $H$ over $D$. Let  $H= X\oplus X^{\ast}$ be a complete polarisation, so that  we can  define two sections
 $s^{\pm}: F^{\times} \longrightarrow \GU(W_2); a \longmapsto   \underbrace{t_a^{\pm} \times \cdots \times t_a^{\pm}}_{n_2 }$
 for $t_a^{\pm}=\left(\begin{array}{ccccccc}
\pm 1& 0 \\
0 & \pm a \end{array}\right)\in  \GU(H)$.  Consider the subgroup
$\Gamma_1=\left\{ (g_1,g_2) \in \GU(W_1) \times s^{\pm}(F^{\times}) \mid \lambda_1(g_1)=\lambda_2(g_2)^{-1} \right\}$
 of $\Gamma$. We  identify $\Lambda_{\Gamma}=\Lambda_{\Gamma_1}$,   and fairly have
 $\Gamma_1 \cap \big( \Ua(W_1, \langle, \rangle_1) \times \Ua(W_2, \langle, \rangle_2)\big) = \left\{ (1, 1), (-1, -1)\right\}.$ It is known that  $\Gamma_1$ belongs to a parabolic subgroup $P(W_1 \otimes X^{\ast})$ of $\Sp(W, \langle, \rangle)$,  provided the condition ($\Ca 3$).  By Lemmas \ref{canonicalsurjectivemap}, \ref{Horth},\footnote{Here, we assume that $W_2$ is not an orthogonal space over $F$.}    $P(W_1  \otimes X^{\ast})$ contains a complete set of representatives for $\Ua(W_2, \langle, \rangle_2)/{[\Ua(W_2, \langle, \rangle_2 ), \Ua(W_2, \langle, \rangle_2)]}$, and also $\Ua(W_1, \langle, \rangle_1)$, $\Gamma_1$, so the condition $(\Ca 4)$  holds.  By Lemma \ref{onesubgroup}, the result follows.
 \end{proof}
\begin{corollary}\label{scindagesursymplectique}
If $W_1$ is a symplectic vector space over $F$  and  $W_2$ is an orthogonal vector space over $F$ of even dimension, then  the central extension  $\overline{\Gamma}$  splits over  $\Gamma$.
\end{corollary}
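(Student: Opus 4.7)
The plan is to observe that a symplectic space $W_1$ over $F$ is automatically hyperbolic — indeed $W_1 \simeq n_1 H$ for $H$ the symplectic hyperbolic plane over $D = F$ with $\epsilon_1 = -1$ — and then to invoke Proposition \ref{thespittingcases} with the roles of $W_1$ and $W_2$ interchanged. Since $W_1$ is symplectic rather than orthogonal, the footnote restriction in the proof of Proposition \ref{thespittingcases} (excluding the case $D=F$, $\epsilon=1$ where Propositions \ref{thecommutantofdimension1} and \ref{canonicalmap} fail) will not obstruct the argument.

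First I would make a preliminary check: since $\dim_F W_2$ is even, the pair $(W_1, W_2)$ falls outside the exceptional case of Theorem \ref{scindagedugroupeR0}, so the restriction of $[c]$ to $\iota\big(\U(W_1) \times \U(W_2)\big)$ is trivial, and the criterion of Lemma \ref{onesubgroup} becomes available.

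Next, I would fix a polarization $H = X \oplus X^*$ and define $s^\pm : F^\times \to \GU(W_1)$ by $a \mapsto \diag(t_a^\pm, \ldots, t_a^\pm)$ ($n_1$ blocks) with $t_a^\pm = \diag(\pm 1, \pm a)$, then set
$$\Gamma_1 := \{(g_1, g_2) \in s^\pm(F^\times) \times \GU(W_2) \mid \lambda_1(g_1) \lambda_2(g_2) = 1\}.$$
Condition $(\C 1)$ holds because $\lambda_1 \circ s^\pm$ surjects onto $F^\times = \Lambda_\Gamma$; condition $(\C 2)$ holds because $\Gamma_1 \cap \big(\U(W_1) \times \U(W_2)\big) = \{(1,1), (-1,-1)\}$. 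For $(\C 3)$, I would verify that $\Gamma_1$ preserves the Lagrangian $Y^* \otimes W_2 \subseteq W$ with $Y^* := n_1 X^*$: both $s^\pm(F^\times)$ and the second-factor action of $\GU(W_2)$ manifestly preserve this subspace. For $(\C 4)$, the set $\Omega_2 := \U(W_2)$ itself will suffice (it acts by $1 \otimes g$ and hence preserves $Y^* \otimes W_2$), while $\Omega_1$ is furnished by Propositions \ref{thecommutantofdimension1} and \ref{canonicalmap} applied to the symplectic $W_1$, yielding representatives inside $\U(H) \subseteq \U(W_1)$ that sit in a parabolic of $\Sp(W_1)$ preserving a Lagrangian, hence inside an appropriate Lagrangian stabilizer of $\Sp(W)$.

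The argument is essentially a transcription of the proof of Proposition \ref{thespittingcases}, so I do not anticipate any serious obstacle. The subtlest point is conceptual rather than technical: the parity hypothesis on $\dim_F W_2$ is not used in the transcription itself but only in the preliminary check above; it is what guarantees that we remain outside the excluded case of Theorem \ref{scindagedugroupeR0}, which powers the entire setup via Lemma \ref{onesubgroup}.
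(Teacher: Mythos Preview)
Your approach is exactly the paper's: the corollary is stated without proof because a symplectic space over $F$ is automatically hyperbolic, so Proposition~\ref{thespittingcases} applies directly with the roles of $W_1$ and $W_2$ interchanged, and the footnote exclusion there (the case $D=F$, $\epsilon=1$) does not bite since the hyperbolic factor is now symplectic. One small slip in your transcription (shared with the paper's own proof of Proposition~\ref{thespittingcases}): with $\Gamma_1 = \{(g_1,g_2)\in s^{\pm}(F^\times)\times \GU(W_2)\mid \lambda_1(g_1)\lambda_2(g_2)=1\}$ the intersection $\Gamma_1\cap(\U(W_1)\times\U(W_2))$ is $\{\pm 1\}\times \U(W_2)$, not $\{(1,1),(-1,-1)\}$, since any $g_2\in\U(W_2)$ pairs with $g_1=\pm 1$; this does not affect the invocation of Proposition~\ref{thespittingcases} itself.
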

\section{The proof of  the main theorem-Part II.}\label{TheproofofthemaintheoremII}
In this section we shall prove Theorem A in another type of cases.
\begin{proposition}
Let  $D=E$ be a quadratic field extension of $F$. If both $W_1$, $W_2$ are anisotropic vector spaces over $E$, then Theorem A holds.
\end{proposition}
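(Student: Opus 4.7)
The plan is to apply Lemma \ref{onesubgroup}: for each admissible pair $(W_1,W_2)$ one must exhibit a closed subgroup $\Gamma_1\subseteq\Gamma$ satisfying the four conditions (C1)--(C4) of Section \ref{reductions}. By Theorem \ref{vigneraslemma} and Proposition \ref{propositionforskewher}, every anisotropic $\epsilon$-hermitian space over $E$ is either one-dimensional, of the form $E(a)$, or the unique two-dimensional space $(\mathbb{H},\langle,\rangle)$ described in Section \ref{Theanisotropichermitianspace}. Since $\epsilon_1\epsilon_2=-1$, only three configurations up to the symmetry $W_1\leftrightarrow W_2$ remain: $(\dim_E W_1,\dim_E W_2)\in\{(1,1),\,(1,2),\,(2,2)\}$.

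In every subcase, the construction of $\Gamma_1$ will consist of two pieces. First, the diagonal scalars $\{(a,a^{-1}):a\in F^\times\}$, which always lie in $\Gamma$ and contribute $(F^\times)^2$ to $\Lambda_\Gamma$ under $\lambda$. Second, a short list of similitude pairs whose $\lambda$-values generate $\Lambda_\Gamma/(F^\times)^2$: when $\dim_E W_\nu=1$ these come from the identification $\GU(W_\nu)=E^\times$ via Lemma \ref{skewhermitiandimension1}, and when $\dim_E W_\nu=2$ they come from Proposition \ref{theexpression}, whose proof provides explicit representatives of $\GU(W_\nu)$ modulo the commutator of $\U(W_\nu)$. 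Because $F^\times/\nnn_{E/F}(E^\times)$ has order $2$ by local class field theory, the required list is short. Conditions (C1) and (C2) then hold for structural reasons: (C1) by design of $\Gamma_1$; (C2) because $\Gamma_1\cap(\U(W_1)\times\U(W_2))$ reduces to $A_2=\{(\pm 1,\pm 1)\}$, from which homomorphisms into $\mu_8$ extend freely to $\Gamma_1$.

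For (C4) I would exploit Proposition \ref{CommutatorsubgroupofSL1} (and, in the $1$-dimensional case, the abelianness of $\mathbb{SL}_1(E)$), which shows that $\U(W_\nu)/[\U(W_\nu),\U(W_\nu)]$ is represented by a small, explicit set of $E$-scalar transformations. These act $E$-linearly on the appropriate tensor factor of $W=W_1\otimes_E W_2$, and therefore preserve any Lagrangian subspace of $(W,\langle,\rangle)$ obtained as the restriction to $F$ of an $E$-Lagrangian adapted to a polarization of that factor. The resulting Lagrangians $Y_1,Y_2$ deliver (C4) in the same spirit as Propositions \ref{thelagrangiansubspaceofY1}--\ref{LagrangiansubspaceofW2varpi}, with the simplification that here every computation takes place inside the commutative field $E$ rather than inside the quaternion algebra $\mathbb{H}$.

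The main obstacle is condition (C3), the vanishing of the restriction of $[c]$ to $\iota(\Gamma_1)$. The plan is to show that $\iota(\Gamma_1)$ is contained in a single Siegel parabolic $P(Y)$ of $\Sp(W)$, on which the Rao cocycle is cohomologically trivial. Concretely, after choosing bases of $W_1$ and $W_2$ compatible with their $E$-structure and producing the induced symplectic $F$-basis of $W$ as in the derivations preceding Proposition \ref{thelagrangiansubspaceofY1}, one exhibits, for each similitude generator of $\Gamma_1$, an explicit change of basis that brings its action on $W$ into block upper-triangular form. The existence of the required conjugator reduces to the solvability of norm equations of the form $\nnn_{E/F}(x)=-1$ (or, in the $(2,2)$-case, $\nnn_{E/F}(x)=-\beta$), which is guaranteed by Lemma \ref{normequality} together with the anisotropy assumption on $W_i$. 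Once (C3) is verified in each of the three subcases, Lemma \ref{onesubgroup} yields the desired splitting of $\overline{\Gamma}$ over $\iota(\Gamma)$.
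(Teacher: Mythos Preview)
Your overall strategy---applying Lemma \ref{onesubgroup} by exhibiting an explicit $\Gamma_1$ and verifying (C1)--(C4)---matches the paper's treatment of the case $\dim_E W_1=\dim_E W_2=2$. There the paper also builds $\Gamma_1$ from scalar pairs $(e,e^{-1})$, but with $e\in E^\times$ (so that $\lambda$ already hits $\nnn_{E/F}(E^\times)$, not merely $(F^\times)^2$), together with a \emph{single} extra generator $(\mathbbm{j},\mathbbm{j}^{-1})$ or $(\mathbbm{e}_{-1}\mathbbm{j},(\mathbbm{e}_{-1}\mathbbm{j})^{-1})$. The verification of (C3) and (C4) then splits into subcases according to whether $\mathbbm{e}_{-1}\in E^\times$ or $\mathbbm{e}_{-1}\in F(\mathbbm{j})^\times$, and whether $E/F$ is unramified; the Lagrangians used for (C3) and for (C4) are different and are constructed by hand in each subcase, so your sketch of (C3) and (C4) is on the right track but suppresses real case distinctions.

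For the one-dimensional cases, however, the paper takes a much shorter route that you overlook. When $\dim_E W_1=1$ (and symmetrically for $W_2$), the tensor product $W=W_1\otimes_E W_2$ is canonically isometric, via $e\otimes w_2\mapsto ew_2$, to $W_2$ equipped with the symplectic form $\Tr_{E/F}(f\,\overline{\langle,\rangle}_2)$. Under this identification one has $\iota(\Gamma)=\iota\bigl(\U(W_1)\times\U(W_2)\bigr)$ on the nose: a similitude pair $(g_1,g_2)\in\Gamma$ with $g_1\in E^\times$ acts on $W_2$ by an element already in the image of the unitary dual pair. Hence Theorem \ref{scindagedugroupeR0} applies directly, and there is no need to invoke the criterion of Section \ref{reductions} at all. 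Your proposed construction of $\Gamma_1$ in these cases is not wrong, but it bypasses a one-line reduction and forces you to verify (C3) and (C4) where neither is needed.
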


\begin{proof}
Cases $I$ $\&$  $II$: $\dim_{E}(W_1)=1$ or $\dim_E(W_2)=1$. By almost symmetry, we only deal with the first case. Assume $W_1=E(f)$,  for $f=1$ or  $f\in F^{\times} \smallsetminus \nnn_{E/F}(E^{\times})$.
We now   define an $F$-bilinear mapping $\theta$ from $W\simeq E(f)\otimes_EW_2$ to $W_2$ as
 $\theta: W= E(f) \otimes_E W_2 \longrightarrow W_2;$
  $e\otimes w_2 \longmapsto ew_2$, which  further induces  an isometry of symplectic spaces over $F$, when $W_2$ on the right-hand side is endowed with the symplectic form  $\langle, \rangle_{2 ,F}= \Tr_{E/F}(f \overline{\langle, \rangle}_2)$. Then the  composite map
$\iota: \Gamma \hookrightarrow \Sp(W, \langle, \rangle) \simeq \Sp(W_2, \langle, \rangle_{2,F}),$  induced by the above $\theta$,
  implies that $\iota(\Gamma)= \iota \big(\Ua(W_1, \langle,\rangle_1) \times \Ua(W_2, \langle, \rangle_2)\big)$. By Theorem \ref{scindagedugroupeR0}, $\overline{\Gamma}$ splits over  $\Gamma$.\\
Case III: $\dim_{E} (W_1) = \dim_{E} (W_2) =2$. By Theorem \ref{vigneraslemma},  we  assume that $W_1 \simeq \mathbb{H}$ and $W_2 \simeq \mathbb{H}$. Suppose  now $E=F(\mathbbm{i})$ for some $ \mathbbm{i}\in \mathbb{H}^0$, with $\mathbbm{i}^2=-\alpha \in F^{\times}$. By \cite[p. 358]{Scha}, we choose an element $\mathbbm{j}\in\mathbb{H}$, such that $\{1, \mathbbm{i}, \mathbbm{j}, \mathbbm{k}=\mathbbm{i} \mathbbm{j}\}$ forms a standard basis of $\mathbb{H}$, with $\mathbbm{i}^2=-\alpha$ and $\mathbbm{j}^2=-\beta$.
   Let $\Tr_{\mathbb{H}/E}$ denote the canonical projection from $\mathbb{H}$ to $E$ defined by
$\Tr_{\mathbb{H}/E}(e_1 + \mathbbm{j}e_2)=e_1$, for $ e_i\in E$.
Then the $E$-vector space $\mathbb{H}=E \oplus \mathbbm{j} E$, equipped with the  form defined as
$\langle e_1+\mathbbm{j}e_2, e_1' + \mathbbm{j}e_2' \rangle_1=\tr_{\mathbb{H}/E}\big( \overline{(e_1+ \mathbbm{j} e_2)}(e_1' + \mathbbm{j}e_2')\big)= \overline{e_1} e_1' + \beta \overline{e_2} e_2', $ for $e_i, e_i'\in \mathbb{H}$,
will turn into a hermitian space over $E$.  On the other hand we  assume  that the form $\langle, \rangle_2$ on $\mathbb{H}= E\oplus E\mathbbm{j}$ is just given by $-\mathbbm{i} \overline{\langle, \rangle_1}$, i.e., if   $e_1+e_2\mathbbm{j}, e_1' + e_2' \mathbbm{j} \in \mathbb{H}$, we  then  have
$\langle e_1+e_2\mathbbm{j}, e_1' + e_2'\mathbbm{j}\rangle_2= -\mathbbm{i}(e_1\overline{e_1'}+ \beta e_2 \overline{e_2'})= \Tr_{\mathbb{H}/E}\big( -\mathbbm{i} (e_1+e_2 \mathbbm{j}) (\overline{e_1'} + \overline{e_2'\mathbbm{j}})\big)$, for $ e_1, e_1' , e_2, e_2' \in E.$ The skew hermitian  form $\langle, \rangle_{W, E}$ on $W=\mathbb{H}\otimes_E \mathbb{H}$ is now  defined as
$\langle w_1\otimes w_2, w_1' \otimes w_2' \rangle_{W, E}= \langle w_1, w_1'\rangle_1 \overline{ \langle w_2, w_2' \rangle_2} =
\mathbbm{i}\big( \overline{a_1}\overline{b_1} a_1'b_1' + \beta\overline{a_1}\overline{b_2}a_1'b_2'\big)+
 \mathbbm{i}\beta \big( \overline{a_2}\overline{b_1} a_2' b_1' + \beta \overline{a_2} \overline{b_2} a_2' b_2'\big)$,
for $w_1=a_1+ \mathbbm{j} a_2, w_1'=a_1' + \mathbbm{j}a_2' \in \mathbb{H}; w_2=b_1 + b_2\mathbbm{j}, w_2'= b_1' + b_2' \mathbbm{j}\in \mathbb{H}$.
As is easy to verify that if  $\mathbb{H}\oplus \mathbb{H}$  is endowed with the   skew hermitian  form
$\langle, \rangle_{\mathbb{H} \oplus \mathbb{H}, E }= \overline{\langle, \rangle_2} +  \beta \overline{\langle, \rangle_2}$, then the  $E$-linear map
$\theta=\theta_1 \oplus \theta_2: W=\mathbb{H}\otimes_E \mathbb{H} \simeq \big(E \oplus \mathbbm{j}E\big)\otimes_E \mathbb{H} \longrightarrow \mathbb{H} \oplus \mathbb{H};$
$ [(a_1 +  \mathbbm{j}a_2) \otimes (b_1 + b_2\mathbbm{j})] \longmapsto    \Big( a_1b_1 + a_1b_2\mathbbm{j}, a_2b_1 +a_2 b_2 \mathbbm{j}\Big),$
 defines an isometry  from $(\mathbb{H}\otimes_E \mathbb{H}, \langle, \rangle_{W, E})$ to $(\mathbb{H} \oplus \mathbb{H}, \langle, \rangle_{\mathbb{H} \oplus \mathbb{H}, E})$. By  definition, we can embed $E^{\times} $ into $\GU(W_1)$ resp. $\GU(W_2)$ defined   as
$e \cdot( e_1 + \mathbbm{j}e_2):= e_1e + \mathbbm{j} e_2e$ resp. $ e\cdot( e_1 + e_2 \mathbbm{j}):=  e e_1 + e e_2\mathbbm{j}$,  for $ e\in E^{\times};$ both
 multipliers  of $e$ are   the same $\nnn_{E/F}(e)$.  We choose an element  $\mathbbm{e}_{-1} \in \mathbb{H}^{\times}$, with reduced norm $-1$. Now let  $\mathbbm{e}_{-1}\mathbbm{j}$ act  on  $W_1$ as
 $[\mathbbm{e}_{-1}\mathbbm{j}, e_1 + \mathbbm{j}e_2]:= \mathbbm{e}_{-1}\mathbbm{j}\cdot (e_1 + \mathbbm{j}e_2),$
  and on $W_2$ as
$[\mathbbm{e}_{-1}\mathbbm{j}, e_1 + e_2\mathbbm{j}]:= (e_1 + e_2\mathbbm{j}) \cdot \mathbbm{e}_{-1}\mathbbm{j}$
 with multipliers both being  $\Nrd(\mathbbm{e}_{-1}\mathbbm{j})=-\beta$.

 (A) If  $ \mathbbm{i}= \xi$, $\mathbbm{e}_{-1}\in E^{\times}$, we define a subgroup  $\Gamma_1$ of $\Gamma$  as  generated by $(\mathbbm{j}, \mathbbm{j}^{-1})$, $ (e, e^{-1})$, for  all $ e\in E^{\times}$. Then $\lambda(\Gamma_1)=
  \langle \nnn_{E/F}(E^{\times}), \beta \rangle=F^{\times}$, and $\iota(\Gamma_1) \cap \iota\big(\Ua(W_1, \langle, \rangle_1) \times \Ua(W_2, \langle, \rangle_2)\big)=1$. Indeed by definition,  we have $\mathbbm{j} \cdot (a + \mathbbm{j} b)= -\beta b+ \mathbbm{j}a$ and $(a+b\mathbbm{j}) \cdot  \mathbbm{j}^{-1}= b - \frac{a}{\beta}\mathbbm{j}$, so through the above $\theta$ the element $(\mathbbm{j}, \mathbbm{j}^{-1})$ acts on $\mathbb{H} \oplus \mathbb{H}$ as
    $(\mathbbm{j}, \mathbbm{j}^{-1})\bullet  \Big( a_1 + b_1 \mathbbm{j}, a_2 + b_2 \mathbbm{j}\Big)=( -\beta b_2 + a_2  \mathbbm{j}, b_1 -\frac{a_1}{\beta}\mathbbm{j})$. Hence the image of $\Gamma_1$ or $\Gamma$ in $\Sp(W, \langle, \rangle)$ sits in $\Ua(W, \langle, \rangle_{W, E})$. By Theorem \ref{scindagedugroupeR0}, the result holds in this case.

 (B) If  $\mathbbm{i}= \varpi$ or $\xi \varpi$, and $\mathbbm{j}=\xi$, we set $F_2=F(\xi)$. In this case,   $\mathbbm{e}_{-1}=a_{-1} + \mathbbm{j}b_{-1} \in F( \mathbbm{j})^{\times}$, for some $a_{-1}, b_{-1}\in F^{\times}$.  We define a subgroup  $\Gamma_1$ of $\Gamma$  as generated by   $(\mathbbm{e}_{-1}\mathbbm{j}, (\mathbbm{e}_{-1}\mathbbm{j})^{-1})$,  $(e, e^{-1})$, for  all $ e\in E^{\times}$.
Then $\lambda(\Gamma_1)= \langle \nnn_{E/F}(E^{\times}), -\beta\rangle= F^{\times}$, and $\iota(\Gamma_1) \cap \iota\big(\Ua(W_1, \langle, \rangle_1) \times \Ua(W_2, \langle, \rangle_2)\big)=1$. By definition, $\mathbbm{e}_{-1} \mathbbm{j} \cdot (a+ \mathbbm{j}b)= - \beta\mathbbm{e}_{-1} b + \mathbbm{e}_{-1}\mathbbm{j}a$, and $(a+ b \mathbbm{j}) \cdot (\mathbbm{e}_{-1} \mathbbm{j})^{-1}=(a+ b \mathbbm{j}) \cdot \frac{\mathbbm{j}}{\beta} \overline{\mathbbm{e}_{-1}} = -b \overline{\mathbbm{e}_{-1}} + \frac{a \overline{\mathbbm{e}_{-1}}}{\beta} \mathbbm{j}$, so through the above $\theta$, the element $\big(\mathbbm{e}_{-1} \mathbbm{j}, (\mathbbm{e}_{-1} \mathbbm{j})^{-1}\big)$ acts on $\mathbb{H} \oplus \mathbb{H}$ as
 $$\big(\mathbbm{e}_{-1} \mathbbm{j}, (\mathbbm{e}_{-1} \mathbbm{j})^{-1}\big)\bullet [a_1 + b_1 \mathbbm{j}, a_2 + b_2 \mathbbm{j}]
    =[ (b_1 b_{-1}+ b_2 a_{-1})\beta -(a_1 b_{-1}+a_2 a_{-1})\mathbbm{j}, (-b_1 a_{-1}+ \beta b_2 b_{-1}) + (\frac{a_1a_{-1}}{\beta}-a_2 b_{-1})\mathbbm{j}] \bullet \overline{\mathbbm{e}_{-1}}.$$
   By observation, the image of $\Gamma_1$  in $\Sp(W, \langle, \rangle)$ belongs to  $\Ua(W, \langle, \rangle_{W, E})$. By Theorem \ref{scindagedugroupeR0}, the result holds in this case. This  completes the proof.
\end{proof}

In Sections \ref{TheproofofthemaintheoremIII}---\ref{TheproofofthemaintheoremV}, in view of the results of Satake( Section \ref{Satakereallyresults}) we shall prove Theorem A in some quaternionic  cases.

\section{The proof of  the main theorem-Part III.}\label{TheproofofthemaintheoremIII}

In this section we shall follow the notations introduced at beginning  of Section \ref{reductions}. Let $(W_1= \mathbb{H}[\mathbbm{j}] \oplus \mathbb{H}[\mathbbm{l}], \langle, \rangle_1)$ be a right anisotropic shew hermitian space over $\mathbb{H}$ of dimension $2$  such that
(1) $\left\{ 1, \mathbbm{i}, \mathbbm{j}, k=\mathbbm{i}\mathbbm{j}=-\mathbbm{j}\mathbbm{i}\right\}$ is a standard basis of $\mathbb{H}$;
(2) $\mathbbm{i}^2=-\alpha$, $\mathbbm{j}^2=-\beta$, $\mathbbm{l}^2=-\mathbbm{k}^2=\alpha \beta$;
(3) $\mathbbm{l}=b_0\mathbbm{i}+ c_0\mathbbm{j}+d_0\mathbbm{k}$ for some $b_0, c_0, d_0\in F$ satisfying $b_0^2 \alpha+ c_0^2\beta+d_0^2\alpha \beta=-\alpha\beta$.
Let $(W_2=\mathbb{H}, \langle,\rangle_2)$ be a left hermitian space over $\mathbb{H}$ of one dimension with the hermitian form defined by  $\langle \mathbbm{d}, \mathbbm{d}' \rangle_2= \mathbbm{d} \overline{\mathbbm{d}'}$, for vectors $\mathbbm{d}, \mathbbm{d}'\in \mathbb{H}$. Let $(W, \langle, \rangle)= (W_1 \otimes_{\mathbb{H}} W_2, \Trd(\langle,\rangle_1\otimes \overline{ \langle,\rangle}_2))$ be as in Section \ref{reductions}.

 Let $ \{ 1, \xi, \varpi, \xi\varpi\}$ be the fixed standard basis of $\mathbb{H}$ given in Section \ref{notationss}. If  $W_1$ is endowed  with the $F$-symplectic form $\langle, \rangle_{1, F}=\Trd(\langle, \rangle_1)$, then one can check that the canonical mapping
$\theta: W= W_1 \otimes_{\mathbb{H}} W_2 \simeq \big( \mathbb{H}[\mathbbm{j}] \oplus \mathbb{H}[\mathbbm{l}]\big) \longrightarrow W_1 =\mathbb{H}[\mathbbm{j}] \oplus \mathbb{H}[\mathbbm{l}];$
$ w_1 \otimes \mathbbm{d} \longmapsto w_1 \mathbbm{d}, $
defines an $F$-isometry from $(W, \langle, \rangle)$ to $(W_1, \langle, \rangle_{1, F})$.  An element $g \in\Ua(W_2, \langle, \rangle_2)$ now acts  on $W_1$  on the right-hand side by multiplication. In the following,  we shall prove the result closely along the different   cases described in  Proposition \ref{propositionforskewher}.

\subsection{Cases $I$ $\&$ $II$}We assume   $(\mathbbm{i}, \mathbbm{j},
\mathbbm{l})=(\varpi , \xi, \mathbbm{e}_{-1} \xi \varpi )$ or $(\xi
\varpi , \xi, \mathbbm{e}_{-1} \varpi )$, in which
cases we may and do assume  $c_0=0$. Assume that  $\mathbbm{e}_{-1}$ is  the Teichm\"uller representative of an element of $k_{F(\mathbbm{j})}$ in $\mathfrak{O}_{F(\mathbbm{j})}$ with order $2(q+1)$. It then follows that $\Nrd(\mathbbm{e}_{-1})=-1$. For simplicity, we choose  certain $b_0$, $d_0$ at the  beginning such that  $(\mathbbm{e}_{-1})^{-1}=d_0 + \tfrac{b_0}{\beta} \mathbbm{j}$, and $\mathbbm{l}= (\mathbbm{e}_{-1})^{-1} \mathbbm{k}$.
 Let us fix  a symplectic basis $\mathcal{A}_2=\left\{ e_1=-\frac{1}{2\beta}, e_2=\frac{\mathbbm{i}}{2\alpha \beta}; e_1^{\ast}=\mathbbm{j},  e_2^{\ast}=\mathbbm{k}\right\}$  of   the subspace $(\mathbb{H}[\mathbbm{j}],\langle, \rangle_{1,F}) $ and  a symplectic basis $\mathcal{A}_3=\left\{ f_1=\frac{1}{2}, f_2=-\frac{\mathbbm{j}}{2\beta}; f_1^{\ast}=\frac{\mathbbm{l}}{\alpha \beta}, f_2^{\ast}=\frac{\mathbbm{j}\mathbbm{l}}{\alpha \beta}\right\}$ of  the subspace $(\mathbb{H}[\mathbbm{l}],\langle, \rangle_{1,F})$. Let  $U_1=\Span\{e_1, e_1^{\ast}\}$, $U_2=\Span\{ e_2, e_2^{\ast}\}$ be the symplectic vector subspaces of $(\mathbb{H}[\mathbbm{j}], \langle, \rangle_{1,F})
 $ with the symplectic basis $\mathcal{A}_2^{(1)}=\{e_1,e_1^{\ast}\}$, $\mathcal{A}_2^{(2)}=\{e_2, e_2^{\ast}\}$ respectively.

 In those cases, $\Ua(W_2, \langle, \rangle_2) \simeq \mathbb{SL}_1(\mathbb{H})$. By  Lemma \ref{hermitiandimension1},   $\Ua(W_2, \langle, \rangle_2) /{[\Ua(W_2, \langle, \rangle_2), \Ua(W_2, \langle, \rangle_2)]}$ is a cyclic group of order $(q+1)$ generated by $\mathbbm{e}_{-1}^2=(d_0^2-\tfrac{b_0^2}{\beta}) -\tfrac{2b_0d_0}{\beta} \mathbbm{j}$ in $\mathbb{SL}_1(F(\mathbbm{j}))$.
Under the above basis $\mathcal{A}_2^{(1)}$, $\mathcal{A}_2^{(2)}$, and $\mathcal{A}_3$, $\mathbbm{e}_{-1}^2$  acts on  $U_1$, $U_2$ and  $\mathbb{H}[\mathbbm{l}]$ by means of the matrices
 $G_1^{(1)}=\begin{bmatrix}
d_0^2-\tfrac{b_0^2}{\beta} &
-4\beta b_0d_0\\
\tfrac{b_0d_0}{\beta^2} &
d_0^2-\tfrac{b_0^2}{\beta}   \end{bmatrix}
  $, $G_1^{(2)}=
\begin{bmatrix}
d_0^2-\tfrac{b_0^2}{\beta}   &4\alpha\beta b_0d_0  \\
 -\tfrac{b_0d_0}{\alpha \beta^2}   &d_0^2-\tfrac{b_0^2}{\beta} \end{bmatrix}$,  and
   $ G_2=\begin{pmatrix}
 \begin{bmatrix}
d_0^2-\tfrac{b_0^2}{\beta}& -\tfrac{2b_0d_0}{\beta} \\
 2b_0d_0 &d_0^2-\tfrac{b_0^2}{\beta} \end{bmatrix} & 0\\
  0& \begin{bmatrix}
d_0^2-\tfrac{b_0^2}{\beta} &  -2b_0d_0 \\
 \tfrac{2b_0d_0}{\beta} & d_0^2-\tfrac{b_0^2}{\beta} \end{bmatrix}
   \end{pmatrix}$  respectively.

On the other hand,  according to Lemma \ref{therepsentativesof2},  $ \mathbb{SL}_1(D_{F(\mathbbm{i})})/{[\mathbb{SL}_1(D_{F(\mathbbm{i})}), \mathbb{SL}_1(D_{F(\mathbbm{i})})]}$  together with $T_1=\{ (1,1), (-\alpha, \mathbbm{i}^{-1} ), (\beta, \sqrt{-\beta}^{-1}), ( -\alpha\beta, \mathbbm{i}^{-1 } \sqrt{-\beta}^{-1})\}$ have full image in $\Ua(W_1, \langle, \rangle_1)/{[\Ua(W_1, \langle, \rangle_1), \Ua(W_1, \langle, \rangle_1)]} $. Now let us first describe the action of  $\left\{ (1,1), (-\alpha, \mathbbm{i}^{-1} ), ( \beta, \sqrt{-\beta}^{-1}), ( -\alpha\beta, \mathbbm{i}^{-1 } \sqrt{-\beta}^{-1})\right\}$  on $W_1$ by following the procedure of Section \ref{Satakereallyresults}. The  element $\mathbbm{i}^{-1}$ (resp. $\sqrt{-\beta}^{-1}$)  corresponds to $h_1=\diag(\mathbbm{i}^{-1}, \mathbbm{i}^{-1}, -\mathbbm{i}^{-1}, -\mathbbm{i}^{-1})$ (resp. $h_2=\diag(\sqrt{-\beta}^{-1}, -\sqrt{-\beta}^{-1}, \sqrt{-\beta}^{-1}, -\sqrt{-\beta}^{-1})$) in the algebra $\mathbb{D}_4$ defined in Section \ref{Satakereallyresults}.(Here, $h_1$, $h_2\in \widetilde{K}$.) By  Remark \ref{examplebaisi} $$h_1^{(2)}=\diag(\begin{pmatrix}
-\alpha^{-1}&\\
 & -\alpha^{-1}
 \end{pmatrix}, \begin{pmatrix}
\alpha^{-1}&\\
 & \alpha^{-1}
 \end{pmatrix}, \begin{pmatrix}
\alpha^{-1}&\\
 & \alpha^{-1}
 \end{pmatrix} )$$ and $$h_2^{(2)}=\diag(\begin{pmatrix}
\beta^{-1}&\\
 & \beta^{-1}
 \end{pmatrix}, \begin{pmatrix}
-\beta^{-1}&\\
 & -\beta^{-1}
 \end{pmatrix}, \begin{pmatrix}
\beta^{-1}&\\
 & \beta^{-1}
 \end{pmatrix} ).$$ Therefore the elements $(-\alpha, \mathbbm{i}^{-1})$ and $(\beta, \sqrt{-\beta}^{-1})$ act on $W_1\otimes_FF_1$ as well as $W_1\otimes_FK$ via the matrices $-\alpha P^{-1} h_1^{(2)} P= \diag(I, -I, -I)$ and $\beta P^{-1} h_2^{(2)} P= \diag(I, -I, I)$ of $\GL_3(M_2(F))$ respectively.  Observe that the images of $(-\alpha, \mathbbm{i}^{-1})$ and $(\beta, \sqrt{-\beta}^{-1})$ in $\Sp(W_1, \langle, \rangle_{1,F})$ belong to the center of $\Sp(\mathbb{H}[\mathbbm{j}], \langle, \rangle_{1,F}) \times \Sp\big(\mathbb{H}[\mathbbm{l}], \langle, \rangle_{1,F}\big)$. Next, by Lemma \ref{hermitiandimension1}, $ \mathbb{SL}_1(D_{F(\mathbbm{i})})/{[\mathbb{SL}_1(D_{F(\mathbbm{i})}), \mathbb{SL}_1(D_{F(\mathbbm{i})})]}$ is a cyclic group of order $(q+1)$ generated by $g_0=(d_0^2-\tfrac{b_0^2}{\beta}) -\tfrac{2b_0d_0}{\beta} \sqrt{-\beta}$ in   $\mathbb{SL}_1(F(\sqrt{-\beta}))$. Let $s_0=(d_0^2-\tfrac{b_0^2}{\beta})$, and $t_0= -\tfrac{2b_0d_0}{\beta}$. Then $g_0=s_0 +t_0\sqrt{-\beta}$  corresponds to
$h=\diag( s_0+ t_0 \sqrt{-\beta}, s_0-t_0\sqrt{-\beta}, s_0+ t_0\sqrt{-\beta}, s_0-t_0\sqrt{-\beta})$ in $\mathbb{D}_4$. By Remark \ref{examplebaisi},  $h^{(2)}=\diag( 1, A, 1) \in \GL_3(M_2(K))$ for $A=\begin{pmatrix}
(s_0+ t_0\sqrt{-\beta})^2&0\\
0 & (s_0- t_0\sqrt{-\beta})^2
 \end{pmatrix}\in \GL_2(K)$; it acts on $W_1\otimes_FF_1$ as well as $W_1\otimes_FK$ via the matrix $P^{-1} h^{(2)} P= \diag(1, B, 1)$(lemma \ref{thesubgroupofO}),  for $$B=\begin{pmatrix}
1&\sqrt{-\beta}\\
1 & -\sqrt{-\beta}
 \end{pmatrix}^{-1} \begin{pmatrix}
(s_0+ t_0 \sqrt{-\beta})^2&0\\
0 &(s_0-t_0\sqrt{-\beta})^2
 \end{pmatrix} \begin{pmatrix}
1&\sqrt{-\beta}\\
1 & -\sqrt{-\beta}
 \end{pmatrix}= \begin{pmatrix}
s_0^2- t_0^2 \beta& -2s_0t_0\beta\\
2s_0t_0 & s_0^2- t_0^2 \beta
 \end{pmatrix}.$$ In this way we  understand $g_0$  well as an element of $\textbf{SU}_{V}(F)$, where $V=\mathbb{H}[\mathbbm{i}] \oplus W_1$ is given at the beginning of  Section \ref{Satakereallyresults}.    As shown in Section \ref{Satakereallyresults}, there is a canonical one-to-one  mapping  from $\Ua(W_1, \langle, \rangle_1)$ to $\textbf{SU}_V(F)$. By following the procedure described by  Satake in \cite{Sa}, $g_0$  acts on $W_1=\mathbb{H}[\mathbbm{j}] \oplus \mathbb{H}[\mathbbm{l}]$ as an element  $(s_0^2-t_0^2 \beta +  2s_0t_0\mathbbm{j}, 1)$ in $\Ua(\mathbb{H}[\mathbbm{j}]) \times \Ua(\mathbb{H}[\mathbbm{l}])$. Under the above bases $\mathcal{A}_2^{(1)}$, $\mathcal{A}_2^{(2)}$, and $\mathcal{A}_3$,  such element   acts on  $U_1$, $U_2$, and  $ \mathbb{H}[\mathbbm{l}]$  via the matrices
 $ H_1^{(1)}=
 \begin{bmatrix}
  s_0^2-t_0^2\beta &
4\beta^2 s_0t_0 \\
  -\tfrac{s_0t_0}{\beta} &
  s_0^2-t_0^2\beta \end{bmatrix} $,  $ H_1^{(2)}=
 \begin{bmatrix}
  s_0^2-t_0^2\beta  &   4 \alpha \beta^2 s_0t_0  \\
-\tfrac{s_0t_0}{\alpha\beta} &   s_0^2-t_0^2\beta\end{bmatrix} $, and $ H_2= \begin{pmatrix}
    \begin{bmatrix}
  1& 0\\
  0 & 1 \end{bmatrix} & 0  \\
  0& \begin{bmatrix}
 1& 0\\
  0 & 1\end{bmatrix}
  \end{pmatrix} $ respectively.

Notations being as above, we can  let $\Omega_1= \langle g_0\rangle \times T_1$, and
  $\Omega_2=\langle \mathbbm{e}_{-1}^2 \rangle$. Note that for any two elements $g_1 \in \GU\big( \mathbb{H}[\mathbbm{j}], \langle, \rangle_{1,F} \big)$, $g_2 \in \GU\big( \mathbb{H}[\mathbbm{l}], \langle, \rangle_{1,F}\big)$ with the same similitude factor, the  ordered pair $(g_1, g_2)$  can be viewed as an element in $\GU(  W_1, \langle, \rangle_1)$. Now  let us  define  a subgroup $\Gamma_1$ of $\Gamma$  as  generated by    $\big(\mathbbm{e}_{-1} \mathbbm{j}, \mathbbm{j}; (\mathbbm{e}_{-1} \mathbbm{j})^{-1}\big)$,  $(\mathbbm{e}_{-1} \mathbbm{l}, \mathbbm{l}; \mathbbm{l}^{-1} )$, and $  (a, a; a^{-1})$ for all $ a\in F^{\times}$.
\begin{lemma}
  \begin{itemize}
   \item[(i)] $\Lambda_{\Gamma_1}= F^{\times}=\Lambda_{\Gamma};$
   \item[(ii)] $\Gamma_1 \cap \big( \Ua(W_1, \langle, \rangle_1) \times \Ua(W_2, \langle, \rangle_2)\big)=\left\{ (1, 1), (-1, -1)\right\}$;
   \item[(iii)] $\iota(\Omega_i)^{\iota(\Gamma_1)}=\iota(\Omega_i)$, for $i=1, 2$.
   \end{itemize}
  \end{lemma}
\begin{proof}
Parts (i)(ii) are straightforward. For (iii), when $i=1$, $[\mathbbm{e}_{-1}\mathbbm{j}, \mathbbm{j}](s_0^2-t_0^2 \beta +  2s_0t_0\mathbbm{j})[ \mathbbm{e}_{-1}^{-1}\mathbbm{j}^{-1},  \mathbbm{j}^{-1}]= s_0^2-t_0^2 \beta +  2s_0t_0\mathbbm{j}$. Note that  $\mathbbm{e}_{-1} \mathbbm{l}=\mathbbm{k}$, so $[\mathbbm{k},  \mathbbm{l}](s_0^2-t_0^2 \beta +  2s_0t_0\mathbbm{j})[ \mathbbm{k}^{-1},  \mathbbm{l}^{-1}]= s_0^2-t_0^2 \beta -  2s_0t_0\mathbbm{j}=\mathbbm{e}_{-1}^{-4}\in \Omega_1$.    The result for $i=1$ is similar. 
\end{proof}
 Under the bases $\mathcal{A}_2^{(1)}$, $\mathcal{A}_2^{(2)}$, $\mathcal{A}_3$,  $\big(\mathbbm{e}_{-1} \mathbbm{j}, \mathbbm{j}; (\mathbbm{e}_{-1} \mathbbm{j})^{-1}\big)$ acts on $U_1$, $U_2$, $\mathbb{H}[\mathbbm{l}]$  via   the matrices $A_1^{(1)}=
\begin{bmatrix}
1 &  0\\
0&1 \end{bmatrix}$, $A_1^{(2)}=
\begin{bmatrix}
 d_0^2-\tfrac{b_0^2}{\beta}& -4\alpha \beta b_0d_0\\
 \tfrac{b_0d_0}{\alpha\beta^2} & d_0^2-\tfrac{b_0^2}{\beta}\end{bmatrix}$, $A_2=\begin{pmatrix}
\begin{bmatrix}
d_0& \tfrac{b_0}{\beta}\\
-b_0&d_0\end{bmatrix} &   \\
& \begin{bmatrix}
-d_0& -b_0\\
\tfrac{b_0}{\beta}&-d_0\end{bmatrix}\end{pmatrix}$ respectively.

 Under the bases $\mathcal{A}_2^{(1)}$, $\mathcal{A}_2^{(2)}$, $\mathcal{A}_3$,  $(\mathbbm{e}_{-1} \mathbbm{l}, \mathbbm{l}; \mathbbm{l}^{-1} )$ acts on $U_1$, $U_2$, $\mathbb{H}[\mathbbm{l}]$  via   the matrices $B_1^{(1)}=
\begin{bmatrix}
-d_0&  -2\beta b_0\\
-\tfrac{b_0}{2\beta^2}&d_0 \end{bmatrix}$, $B_1^{(2)}=
\begin{bmatrix}
 d_0& -2\alpha\beta b_0\\
 -\tfrac{b_0}{2\alpha\beta^2}& -d_0\end{bmatrix}$, $B_2=\begin{pmatrix}
\begin{bmatrix}
1& 0\\
0&-1\end{bmatrix} &   \\
& \begin{bmatrix}
1& 0\\
0&-1\end{bmatrix}\end{pmatrix}$ respectively.
\begin{remark}\label{grelations}
\begin{itemize}
\item[(1)] $H_1^{(1)}=\{G_1^{(1)}\}^{2}$, and  $H_1^{(2)}=\{G_1^{(2)}\}^{-2}$;
\item[(2)] $B_1^{(1)}G_1^{(1)}=\{G_1^{(1)}\}^{-1}B_1^{(1)}$;
\item[(3)] $G_1^{(2)} A_1^{(2)}=I$ and $ B_1^{(2)}A_1^{(2)}=\{A_1^{(2)}\}^{-1} B_1^{(2)}$, where $I$ is the identity matrix;
\item[(4)] $\{B_1^{(i)}\}^2=-I$, for $i=1, 2$.
\end{itemize}
\end{remark}
\begin{proof}
(1) Notice that $s_0^2-t_0^2 \beta+ 2s_0t_0 \mathbbm{j}=\mathbbm{e}_{-1}^4$. The matrices $G^{(1)}$, $G^{(2)}$ correspond to the element $\mathbbm{e}_{-1}^4$  acting  on $\mathbb{H}[\mathbbm{j}]$ on the right-hand side,  and    $H_1^{(1)}$ $H_1^{(2)}$ correspond to the element $\mathbbm{e}_{-1}^2$ acting on  $\mathbb{H}[\mathbbm{j}]$  on the left-hand side, so the result follows from $\overline{\mathbbm{e}_{-1}}^{4}=(\mathbbm{e}_{-1})^{-4}$. Other parts (2)---(4) are straightforward.
\end{proof}
Let  $\mathcal {S}_i$ be the subgroup of $\SL(U_i,\langle, \rangle_{1,F}) $ generated by $G_1^{(i)}, H_1^{(i)}, A_1^{(i)}$, and $\mathcal {T}_i= \mathcal {S}_i\cdot \langle B_1^{(i)}\rangle$, for $i=1, 2$ respectively. By Remark \ref{grelations}, $\mathcal {S}_i=\langle G_1^{(i)} \rangle \simeq \mathbb{Z}_{q+1}$, and $\langle B_1^{(i)}\rangle \simeq \mathbb{Z}_4$.  Let us fix a non-trivial character $\psi$ of $F$.
\begin{remark}\label{BCx}
Let $C _1^{(1)}=\begin{bmatrix}
-d_0&  2\beta b_0\\
\tfrac{b_0}{2\beta^2}&d_0 \end{bmatrix}$, and $C_1^{(2)}=
\begin{bmatrix}
d_0& 2\alpha\beta b_0\\
 \tfrac{b_0}{2\alpha\beta^2}& -d_0\end{bmatrix}$. Then 
 \begin{itemize}
\item[(1)]$G_1^{(i)}=B_1^{(i)}C_1^{(i)}$,  $\{G_1^{(i)}\}^{-1}=C_1^{(i)}B_1^{(i)}$, for $i=1,2$;
 \item[(2)] $B_1^{(i)} \notin \mathcal {S}_i$, $C_1^{(i)}\notin \mathcal {S}_i$;
\item[(3)]  $\{C_1^{(i)}\}^2=-I$ , for $i=1, 2$.
 \end{itemize}
 \end{remark}
 \begin{proof}
Parts (1) and (3) are straightforward. For (2),  by observation,  every element  $g\in  \mathcal {S}_i$ has the form $ \begin{bmatrix}
t&  \ast \\
\ast & t \end{bmatrix}$, for some $t\in F$. However the diagonal  parts  of $B_1^{(i)} $, $C_1^{(i)}$ are $\diag(d_0, -d_0)$ or $\diag(-d_0, d_0)$ with  $d_0\neq 0$. 
\end{proof}

 \begin{lemma}\label{thespittingVi}
 The inverse image of $\mathcal {T}_i$ in $\overline{\SL(U_i, \langle, \rangle_{1, F})}$  splits.
 \end{lemma}
 \begin{proof}
 In case $i=1$, we assume  $B_1^{(1)}=u(x) h(t)u(-x)$, for certain $u(x)= \begin{bmatrix}
1&x\\
0&1\end{bmatrix}$, $h(t)= \begin{bmatrix}
0&t\\
-t^{-1}&0\end{bmatrix}$. Then  $C_1^{(1)}=u(-x) h(-t)u(x)$.   Set $Y_1^{\ast} =\Span\{e_1^{\ast}\}$. Let $ c_{Y_1^{\ast}, \psi}$ the Leray cocycle(\emph{cf}. \cite[p.13]{Kud2}) associated to  the Lagrangian subspace $Y_1^{\ast}$ and  $\psi$.  Then using the formulas in \cite[pp.18-21]{Kud2}, we can calculate the Leray cocycle
$$c_{Y^{\ast}, \psi}(B_1^{(1)}, B_1^{(1)})= c_{Y^{\ast},\psi}\big( u(x) h(t)u(-x), u(x) h(t)u(-x)\big)=c_{Y^{\ast}, \psi}\big( h(t), h(t)\big)= \gamma\Big(\psi\circ L\big(Y^{\ast}\cdot h(t), Y^{\ast}, Y^{\ast}\cdot h(t)^2\big)\Big)=1.$$
According to  Theorem \ref{scindagedugroupeR0}, the restriction of $[c_{Y_1^{\ast}, \psi}]$ to $\mathcal {S}_1$  is trivial, so there exists a Borel function $f(t)$ from $\mathcal {S}_1$ to $\mu_8$ such that
$ c_{Y_1^{\ast}, \psi}(t_1, t_2)= f(t_1t_2) f(t_1)^{-1} f(t_2)^{-1}$, for $ t_1, t_2 \in \mathcal{S}_1$.\footnote{ See \cite[p.57]{MVW} for  the  details.}
Now view $f(t)$ as a function from $\mathcal{T}_1$ to $\mu_8$, and define a new $2$-cocycle $\overline{c}_{Y^*, \psi}= c_{Y^*, \psi}\cdot\delta_1(f)$. Then
  $\overline{c}_{Y^{\ast}, \psi}\big([G_1^{(1)}]^k,[G_1^{(1)}]^l\big)=\overline{c}_{Y^{\ast}, \psi}\big(\pm B_1^{(1)}, \pm B_1^{(1)}\big)=1$, for any integers $k, l$.
Moreover,
\begin{equation}\label{equationcocycle}
\begin{split}
\overline{c}_{Y^{\ast}, \psi}(B_1^{(1)}, C_1^{(1)})=c_{Y^{\ast}, \psi}(B_1^{(1)}, C_1^{(1)})=c_{Y^{\ast},\psi}\big( u(x) h(t)u(-x), u(-x) h(-t)u(x)\big)=c_{Y^{\ast},\psi}\big(  h(t)u(x^2), h(-t)\big)\\
=c_{Y^{\ast},\psi}\big(  h(-t)u(x^2), h(t)\big)=c_{Y^{\ast},\psi}\big( u(-x) h(-t)u(x), u(x) h(t)u(-x)\big)=c_{Y^{\ast}, \psi}(C_1^{(1)}, B_1^{(1)})=\overline{c}_{Y^{\ast}, \psi}(C_1^{(1)}, B_1^{(1)})
\end{split}
\end{equation}

   Let us define a new function $h$ from $\mathcal {T}_1$ to $\mu_8$ as
 $h(a)=\overline{c}_{Y^{\ast}, \psi}\big( (B_1^{(1)})^k, (G_1^{(1)})^l\big)$, for $a=(B_1^{(1)})^k(G_1^{(1)})^l\in \mathcal {T}_1$.  Then replace the $2$-cocycle  $\overline{c}_{Y^{\ast},\psi}$ by $c^{\ast}_{Y^{\ast},\psi}=\overline{c}_{Y^{\ast},\psi} \cdot\delta_1(h)$. Next let us apply the criterion(Lemma \ref{onesubgroup}) to the triple $(\mathcal {S}_1 , \langle B_1^{(1)}\rangle, \mathcal {T}_1 )$ instead of $(\Ua(W_1) \times \Ua(W_2), \Gamma_1, \Gamma)$. Go back to the proof of Lemma \ref{onesubgroup}, and it suffices to show that the image  $p\big([c^{\ast}_{Y^{\ast}, \psi}]\big)$ is trivial, where $p: \Ha^2( \mathcal {T}_1 , \mu_8)_1 \longrightarrow \Ha^1\big(\langle  B_1^{(1)} \rangle, \Ha^1( \mathcal {S}_1, \mu_8)\big)$. Since  $\{B_1^{(1)}\}^2=-I$, and $  \mathcal {S}_1$ is a finite cyclic group, it suffices to verify that $p\big([c^{\ast}_{Y^{\ast}, \psi}]\big)( G_1^{(1)},\pm B_1^{(1)})=1$. Note that  $p\big([c^{\ast}_{Y^{\ast}, \psi}]\big)\big( G_1^{(1)}, \pm B_1^{(1)}\big)=\overline{c}_{Y^{\ast}, \psi}\big(G_1^{(1)},  \pm B_1^{(1)}\big)  h(G_1^{(1)}) h(\pm B_1^{(1)}) [h(\pm G_1^{(1)} B_1^{(1)})]^{-1}= \overline{c}_{Y^{\ast}, \psi}\big(G_1^{(1)},  \pm B_1^{(1)}\big) \overline{c}_{Y^{\ast}, \psi}\big( \pm B_1^{(1)}, [G_1^{(1)}]^{-1}\big)^{-1}=1$ by (\ref{equationcocycle}), (\ref{2cyccle}) and Remark \ref{BCx}. The case $i=2$ is similar.
 \end{proof}
 According to  \cite[pp. 245-246]{HanM}, there exists the following morphism on cover groups:
$$\overline{\SL(\Ua_1, \langle, \rangle_{1, F})} \times \overline{\SL(\Ua_2, \langle, \rangle_{1, F})} \times \overline{\Sp(\mathbb{H}[\mathbbm{l}], \langle, \rangle_{1, F})} \longrightarrow \overline{\Sp(W_1, \langle, \rangle_{1, F})}.$$
 Let $Y_3=\Span\{f_1^{\ast}, f_2^{\ast}\}$ be a Lagrangian subspace of $(\mathbb{H}[\mathbbm{l}], \langle, \rangle_{1,F})$. Then the above elements $G_2, H_2$, $A_2, B_2$ all belong to the parabolic subgroup  $P(Y_3)$ of $\Sp\big(\mathbb{H}[\mathbbm{l}], \langle, \rangle_{1,F}\big)$.
By Lemma \ref{onesubgroup} and  Remark \ref{invariant},  we obtain:
\begin{proposition}
In the above two cases I and II, Theorem A holds.
\end{proposition}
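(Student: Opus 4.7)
The plan is to invoke the criterion of Lemma \ref{onesubgroup} applied to the subgroup $\Gamma_1 \subseteq \Gamma$ already constructed above, and to verify that the four conditions $(\C1)$--$(\C4)$ of Section \ref{reductions} are precisely what the three preceding lemmas have established. Concretely, condition $(\C1)$, the surjectivity of $\lambda \colon \Gamma_1 \to \Lambda_\Gamma$, is the first assertion of the lemma preceding Lemma \ref{thespittingVi}: the generators of type $(a,a;a^{-1})$ contribute $(F^\times)^2$, while the quaternionic generators $(\mathbbm{e}_{-1}\mathbbm{j},\mathbbm{j};(\mathbbm{e}_{-1}\mathbbm{j})^{-1})$ and $(\mathbbm{e}_{-1}\mathbbm{l},\mathbbm{l};\mathbbm{l}^{-1})$ (or their analogues in Case II) contribute enough additional cosets of $(F^\times)^2$ to cover all of $F^\times$, using Lemma \ref{normequality}. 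Condition $(\C2)$ is immediate from the computation $\Gamma_1 \cap (\U(W_1)\times \U(W_2)) = \{(1,1),(-1,-1)\}$, since any character of this $\mu_2$-subgroup extends to a character of $\Gamma_1$ with values in $\mu_8$.

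Next, condition $(\C3)$ is exactly Lemma \ref{thespittingVi}. In Case I this is proved by exhibiting the Lagrangian $Y = \{a+b\mathbbm{i}\mid a,b\in F\}$ of each summand $\mathbb{H}(\mathbbm{j})$ and $\mathbb{H}(\mathbbm{l})$, checking that the three explicit actions of the generators of $\Gamma_1$ on $W_1 = \mathbb{H}(\mathbbm{j})\oplus\mathbb{H}(\mathbbm{l})$ preserve $Y\oplus Y$, so that $\iota(\Gamma_1) \subseteq P(Y\oplus Y)$ and the Rao cocycle restricts trivially. In Case II the $\Gamma_1$-action does not stabilize any single Lagrangian of the full $W_1$, so the argument is carried out factor by factor: on $\mathbb{H}(\mathbbm{j})$ one appeals to the splitting argument developed in the general case of Proposition \ref{thetrivialresult1} (which already covered this situation), and on $\mathbb{H}(\mathbbm{l})$ one uses the Lagrangian $Y=\{a+b\mathbbm{j}\mid a,b\in F\}$, which is $\Gamma_1$-stable by direct inspection.

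Condition $(\C4)$ is the content of the last lemma and rests on Proposition \ref{lagrangiansubspace4}. One takes $\Omega_1 = \mathbb{SL}_1(F(\sqrt{-\beta}))$ and $\Omega_2 = \mathbb{SL}_1(F(\mathbbm{j}))$ together with the finite extra coset representatives supplied by Propositions \ref{therepsentativesof} and \ref{therepsentativesof2}, and checks that $\iota(\Omega_\nu)$ lies in a parabolic $P(Y_\nu)$ of $\Sp(W)$, with $Y_\nu$ the Lagrangian exhibited by the explicit conjugation-to-upper-triangular-form computation of Proposition \ref{lagrangiansubspace4}. The $\Gamma_1$-invariance of $\Omega_\nu$ (up to the commutator subgroups) follows by the direct conjugation computations already displayed, for instance $\mathbbm{l}^{-1}(a_0+b_0\mathbbm{j})\mathbbm{l} = a_0-b_0\mathbbm{j} \in \Omega_2$, and similarly for $\Omega_1$ via the identification of an element $a+b\sqrt{-\beta}\in \mathbb{SL}_1(F(\sqrt{-\beta}))$ with $(a^2-b^2\beta+2ab\mathbbm{j},1) \in \U(\mathbb{H}(\mathbbm{j}))\times\U(\mathbb{H}(\mathbbm{l}))$ provided by Satake's exact sequence.

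The main obstacle, and the place where Case II differs essentially from Case I, is verifying $(\C3)$ when $-1 \notin (F^\times)^2$: no global Lagrangian of $W_1$ is simultaneously preserved by all generators of $\Gamma_1$, and one must decompose the cohomology class factor by factor along $W_1 = \mathbb{H}(\mathbbm{j})\oplus\mathbb{H}(\mathbbm{l})$, using the nontrivial quadratic reduction of Proposition \ref{thetrivialresult1} for the first summand. Once $(\C1)$--$(\C4)$ are all in hand, Lemma \ref{onesubgroup} yields a splitting of the extension $1 \to \mu_8 \to \overline{\Gamma} \to \iota(\Gamma) \to 1$, which is the statement of Theorem A in these two cases.
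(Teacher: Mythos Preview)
Your proposal is correct and follows exactly the paper's approach: the paper's proof is the single line ``By Lemma \ref{onesubgroup}, we obtain'', since the three preceding lemmas have already verified conditions $(\C1)$--$(\C4)$ for the subgroup $\Gamma_1$. One minor imprecision: in your justification of $(\C2)$ you speak of extending characters of a $\mu_2$-subgroup, but in fact $\iota(\Gamma_1)\cap\iota\big(\U(W_1)\times\U(W_2)\big)$ is trivial (the kernel of $\iota$ is exactly $\{(1,1),(-1,-1)\}$), so $(\C2)$ is vacuous here.
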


\subsection{Case III}\label{Casegnenralcase} In this subsection, we follow  the notions of Section \ref{notations}.  Let $(\mathbbm{i},\mathbbm{j},\mathbbm{l})=(\xi, \varpi, \mathbbm{e}_{-1}
\xi\varpi)$, and  for simplicity we assume  the beginning  $b_0=0$.  Let us fix a  symplectic basis $\mathcal{B}_2=\left\{
-\frac{1}{2\beta}, \frac{\mathbbm{i}}{2\alpha \beta}; \mathbbm{j},
\mathbbm{k}\right\}$ of $(\mathbb{H}[\mathbbm{j}], \langle, \rangle_{1,F})$ and  $\mathcal{B}_3=\left\{ \frac{1}{2},
-\frac{\mathbbm{i}}{2\alpha}; \frac{\mathbbm{l}}{\alpha\beta},
\frac{\mathbbm{i}\mathbbm{l}}{\alpha\beta}\right\}$  of
$(\mathbb{H}[\mathbbm{l}], \langle, \rangle_{1,F})$. It is known that $\Ua(W_2,\langle, \rangle_2)/{[\Ua(W_2, \langle,
\rangle_2), \Ua(W_2, \langle, \rangle_2)]} \simeq
\mathbb{SL}_1(\mathbb{H})/{[\mathbb{SL}_1(\mathbb{H}),
\mathbb{SL}_1(\mathbb{H})]},$
which  is     a cyclic group of order $(q+1)$;  we choose a generator
 with inverse image $x_0+ y_0\mathbbm{i}$ in $\mathbb{SL}_1(F(\mathbbm{i}))$.
 Such element acts on $\mathbb{H}[\mathbbm{j}]$ resp. $\mathbb{H}[\mathbbm{l}]$,
 with respect to the basis $\mathcal{B}_2$ resp. $\mathcal{B}_3$, via the  matrices $L_2=\begin{pmatrix}
  x_0  &   y_0  &   0          &  0    \\
  -y_0 \alpha  & x_0         &  0     & 0           \\
 0   &  0      &       x_0    &  y_0\alpha      \\
  0 &    0    & -y_0 & x_0
 \end{pmatrix}$  resp. $L_3=L_2$.  Now let $Y_{\nu}=\left\{ x+ y\mathbbm{i}\mid x, y\in F\right\}$
  be a Lagrangian subspace of $(\mathbb{H}[\mathbbm{j}], \langle, \rangle_{1,F})$
  as well as $(\mathbb{H}[\mathbbm{l}], \langle, \rangle_{1,F})$, and $Y= Y_{\nu} \oplus Y_{\nu}$
   a Lagrangian subspace of $(W_1, \langle, \rangle_{1,F})$. So  $L_2$ or $L_3$ belongs to the parabolic subgroup $P( Y)$ of $\Sp(W_1, \langle, \rangle_{1,F})$. If we  let $\Omega_2=\mathbb{SL}_1\big(F(\mathbbm{i})\big)$, with full image
 in $\Ua(W_2, \langle, \rangle_2)/[ \Ua(W_2, \langle, \rangle_2), \Ua(W_2, \langle,
 \rangle_2)]$, then the image of $\Omega_2$ in $\Sp( W_1, \langle,
 \rangle_{1, F})$  belongs to $P(Y)$.

Now let us turn to the group $\Ua(W_1, \langle, \rangle_1)$.
Similarly as Cases $I$ $\&$ $II$, the images of  $(-\alpha,
\mathbbm{i}^{-1})$ and $(\beta, \sqrt{-\beta}^{-1})$ in $\Sp(W_1,
\langle, \rangle_{1,F})$ belong to the center  of
$\Sp(\mathbb{H}[\mathbbm{j}], \langle, \rangle_{1,F}) \times
\Sp(\mathbb{H}[\mathbbm{l}], \langle, \rangle_{1,F})$.  By Remark \ref{thequotient12},
$
\mathbb{SL}_1(D_{F(\mathbbm{i})})/[\mathbb{SL}_1(D_{F(\mathbbm{i})}),
\mathbb{SL}_1(D_{F(\mathbbm{i})})]$,  is isomorphic with  a cyclic group of
order $(q^2+1)$. Let us chose a generator of that group  with inverse image   $\alpha_1+ \mathbbm{a}_1\alpha_2$  in $
\mathbb{SL}_1(D_{F(\mathbbm{i})})$ so that   $\alpha_1^{2} +(c_0-
d_0\mathbbm{i}) \alpha_2^{2}=1$. Then  it  acts
on $W_1\otimes_FF_1$ via the  matrix
$g= \begin{pmatrix}
  \alpha_1  & (-c_0+d_0 \mathbbm{i}) \alpha_2     &              &     \\
  \alpha_2  &         \alpha_1          &                &            \\
    &       &       \alpha_1^{\sigma}    &      -\frac{(c_0+d_0\mathbbm{i})}{\mathbbm{i}}\alpha_2^{\sigma}   \\
   &        &  \mathbbm{i}\alpha_2^{\sigma} & \alpha_1^{\sigma}
 \end{pmatrix} \in \mathbb{D}_4.$
  By calculation (\emph{cf}. Remark \ref{examplebaisi}), we know that
  $$g^{(2)}= \begin{pmatrix}
1  &  &                                  &                                  &                                          &                               \\
   &1 &                                  &                                  &                                          &                              \\
   &  & \nnn_{F(\mathbbm{i})/F}(\alpha_1)          & \mathbbm{i} \nnn_{F(\mathbbm{i})/F}(\alpha_2)    & -\frac{c_0+d_0\mathbbm{i}}{\mathbbm{i}} \alpha_1\alpha_2^{\sigma}&   (-c_0+d_0\mathbbm{i}) \alpha_2 \alpha_1^{\alpha}      \\
   &  &\mathbbm{i}\nnn_{F(\mathbbm{i})/F}(\alpha_2)  &   \nnn_{F(\mathbbm{i})/F}(\alpha_1)      & \alpha_2 \alpha_1^{\sigma}   &  \mathbbm{i}\alpha_1 \alpha_2^{\sigma}\\
   &  &\mathbbm{i} \alpha_1\alpha_2^{\sigma} &  (-c_0+ d_0\mathbbm{i}) \alpha_2\alpha_1^{\sigma} & \nnn_{F(\mathbbm{i})/F}(\alpha_1)  & \mathbbm{i}(-c_0+d_0\mathbbm{i}) \nnn_{F(\mathbbm{i})/F}(\alpha_2)  \\
   &  &  \alpha_2\alpha_1^{\sigma}&  -\frac{c_0+d_0\mathbbm{i}}{\mathbbm{i}}\alpha_1 \alpha_2^{\sigma}&  -\frac{c_0+d_0\mathbbm{i}}{\mathbbm{i}} \nnn_{F(\mathbbm{i})/F}(\alpha_2) & \nnn_{F(\mathbbm{i})/F}(\alpha_1)
 \end{pmatrix}.$$
 It can be directly checked that $P^{-1} g^{(2)} P= \begin{pmatrix}
1  & 0 &    0                                   \\
  0 &    X_{22} & X_{23} \\
0 & X_{32} & X_{33}
\end{pmatrix} \in \GL_3(M_2(F))$ for $X_{22}=  \begin{pmatrix}
   \nnn_{F(\mathbbm{i})/F}(\alpha_1)+ \mathbbm{i}\nnn_{F(\mathbbm{i})/F}(\alpha_2)  & 0   \\
 0 &   \nnn_{F(\mathbbm{i})/F}(\alpha_1)- \mathbbm{i}\nnn_{F(\mathbbm{i})/F}(\alpha_2)
\end{pmatrix}$, $ X_{23}= \begin{pmatrix}
(-c_0+d_0\mathbbm{i}) \alpha_2 \alpha_1^{\sigma}+ \alpha_1 \alpha_2^{\sigma}\mathbbm{i} &  0 \\
0 &  -\alpha_2\alpha_1^{\sigma}
\mathbbm{i}-(c_0+d_0\mathbbm{i})\alpha_1 \alpha_2^{\sigma}
\end{pmatrix}$, $X_{32}= \begin{pmatrix}
\alpha_2 \alpha_1^{\sigma}-\frac{(c_0+d_0\mathbbm{i})\alpha_1 \alpha_2^{\sigma}}{\mathbbm{i}}  & 0 \\
0&  \alpha_1\alpha_2^{\sigma} +\frac{(c_0-d_0\mathbbm{i})
\alpha_2\alpha_1^{\sigma}}{\mathbbm{i}}
\end{pmatrix}$,  $X_{33}=\begin{pmatrix}
\nnn_{F(\mathbbm{i})/F}(\alpha_1) +\mathbbm{i}\nnn_{F(\mathbbm{i})/F}(\alpha_2) & 0\\
0&
\nnn_{F(\mathbbm{i})/F}(\alpha_1)-\mathbbm{i}\nnn_{F(\mathbbm{i})/F}(\alpha_2)
   \end{pmatrix}.$ Hence the matrix $\begin{pmatrix}
X_{22} & X_{23}\\
X_{32} & X_{33}
\end{pmatrix}$ corresponds to
$\begin{pmatrix} \nnn_{F(\mathbbm{i})/F}(\alpha_1)+ \mathbbm{i}
\nnn_{F(\mathbbm{i})/F}(\alpha_2)
& (-c_0+d_0\mathbbm{i}) \alpha_2 \alpha_1^{\sigma}+ \alpha_1 \alpha_2^{\sigma}\mathbbm{i}  \\
\alpha_2 \alpha_1^{\sigma}-\frac{(c_0+d_0\mathbbm{i})\alpha_1
\alpha_2^{\sigma}}{\mathbbm{i}}
 & \nnn_{F(\mathbbm{i})/F}(\alpha_1) +\mathbbm{i}\nnn_{F(\mathbbm{i})/F}(\alpha_2)
\end{pmatrix}\in \SL_2(\mathbb{H})$. Taking the set $\Xi_1 =\mathbb{SL}_1\big(F(\mathbbm{i})( \mathbbm{a}_1)\big)$ as defined in Lemma \ref{therepsentativesof2}, we then have
\begin{lemma}\label{Omega1W1}
Recall the set $T_1$  in  Lemma \ref{therepsentativesof2}. Let $\Omega_1$ be the image of $T_1$  in
$\Ua(W_1, \langle, \rangle_1)$. Then:
\begin{itemize}
   \item[(1)]  The composite map
   $ \pm \iota(\Omega_1) \hookrightarrow \iota\big(\Ua(W_1, \langle, \rangle_1)\big)\twoheadrightarrow \iota\big(\Ua(W_1, \langle, \rangle_{1, F})\big)/
   \iota\big([\Ua(W_1, \langle, \rangle_{1, F}), \Ua(W_1, \langle, \rangle_{1, F})]\big)$
   is onto.
   \item[(2)] The image of $\Omega_1$ in $\Sp(W_1, \langle,
   \rangle_{1, F})$ belongs to certain $P(Y)$.
   \end{itemize}
 \end{lemma}
 \begin{proof}
The first statement is immediate. Recall that  $Y_{\nu}=\left\{ x+ y \mathbbm{i}\mid x,y\in F\right\}$ is  a
 Lagrangian subspace of $(\mathbb{H}(\mathbbm{\mathbbm{j}}), \langle, \rangle_{1,F})$
 as well as $(\mathbb{H}[\mathbbm{l}], \langle, \rangle_{1,F})$.
 From the above discussion, we know that the Lagrangian subspace
 $Y=Y_{\nu} \oplus Y_{\nu}$  of $(W_1, \langle, \rangle_{1,F})$ is $(\alpha_1+ \mathbbm{a}_1 \alpha_2)$-stable for $\alpha_1 +\mathbbm{a}_1 \alpha_2 \in \mathbb{SL}_1\big( F(\mathbbm{i})(\mathbbm{a}_1)\big)$,
 which is the required  result.
\end{proof}

We assume $(\mathbbm{e}_{-1})^{-1}=d_0 -\frac{c_0}{\alpha} \mathbbm{i}$, and $\mathbbm{l}= (\mathbbm{e}_{-1})^{-1} \mathbbm{k}$.  Now we let $\Gamma_1$ be a subgroup of $\Gamma$ generated by the following elements: (i)  $(a, a; a^{-1})$ for all $a \in F^{\times }$, (ii) $( \mathbbm{i}, \mathbbm{i};
   \mathbbm{i}^{-1} \mathbbm{e}_{-1}^{-1})$, (iii) $( \mathbbm{e}_{-1}\mathbbm{l} , \mathbbm{l};
   \mathbbm{l}^{-1})$. Then
(1)  $ \Lambda_{\Gamma_1}=F^{\times}=\Lambda_{\Gamma}$,
 (2)  $ \iota(\Gamma_1)\cap \iota\big( \Ua(W_1, \langle, \rangle_1)\times  \Ua(W_2, \langle, \rangle_2) \big)= 1$.

\begin{lemma}\label{theconditionC46}
\begin{itemize}
\item[(1)] The condition  $(\Ca4)$ of Section \ref{reductions}  holds in
this case.
\item[(2)] Under the
restriction $\Ha^{2}(\iota(\Gamma), \mu_8) \longrightarrow
\Ha^2(\iota(\Gamma_1), \mu_8),$
 the image of $[c]$ is  trivial.
 \end{itemize}
\end{lemma}
\begin{proof}
Let $Y=\left\{ [ (x_1+ y_1 \mathbbm{i}), (x_2+ y_2 \mathbbm{i})]\mid x_i, y_i \in F\right\}$ be a Lagrangian subspace of $(W_1, \langle, \rangle_{1, F})$. Then   $\iota(\Gamma_1)$, $\iota(\Omega_1)$, $\iota(\Omega_2)$ all belong to $P(Y)$.
\end{proof}
Finally we achieve the main result of this subsection:
\begin{proposition}
In  Case III, Theorem A holds.
\end{proposition}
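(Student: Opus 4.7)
The plan is to deduce the proposition directly from the criterion Lemma \ref{onesubgroup}, since by this point in Case III all the necessary ingredients are essentially in place. Concretely, I would take the subgroup $\Gamma_1 \subseteq \Gamma$ constructed just above (generated by $(a,a;a^{-1})$ for $a\in F^{\times}$, by $(\mathbbm{i},\mathbbm{i};\mathbbm{e}_{-1}\mathbbm{i})$, and by $(\mathbbm{e}_{-1}\mathbbm{l},\mathbbm{l};\mathbbm{l}^{-1})$) and verify the four conditions $(\C 1)$--$(\C 4)$ for this pair $(\Gamma_1,\Gamma)$.

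First I would note that $(\C 1)$, namely surjectivity of $\lambda: \Gamma_1 \to \Lambda_{\Gamma}$, is exactly the first statement of the lemma preceding Lemma \ref{theconditionC46}, while $(\C 2)$ follows immediately from the second statement there, since $\Gamma_1 \cap \big(\U(W_1)\times \U(W_2)\big) = \{(1,1),(-1,-1)\}$ means the relevant invariant $\Hom$-group is trivial and there is nothing to lift. Next, condition $(\C 3)$ is precisely Lemma \ref{thetrivialresult378}, which places $\iota(\Gamma_1)$ inside a Siegel parabolic $P(Y)$ of $\Sp(W_1,\langle,\rangle_{1,F})$ via the common Lagrangian $Y = \{[(x_1+y_1\mathbbm{i}),(x_2+y_2\mathbbm{i})]\mid x_i,y_i\in F\}$ and so forces the restriction of $[c]$ to vanish (since the Rao cocycle splits on any Siegel parabolic). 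Condition $(\C 4)$ is Lemma \ref{theconditionC46}, provided by the explicit sets $\Omega_1$ (from Lemma \ref{Omega1W1}) and $\Omega_2 = \mathbb{SL}_1(F(\mathbbm{i}))$ (from Lemma \ref{Omega2W2}), both of which sit inside $P(Y)$ for the same Lagrangian $Y$ and both of which are $\iota(\Gamma_1)$-stable by direct inspection.

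Once these four conditions are in hand, an appeal to Lemma \ref{onesubgroup} produces a splitting of the exact sequence
\[
1 \longrightarrow \mu_8 \longrightarrow \overline{\Gamma} \longrightarrow \iota(\Gamma) \longrightarrow 1
\]
over $\iota(\Gamma)$, which is exactly the conclusion of Theorem A in Case III. I do not anticipate any real obstacle here: the genuinely substantial work has already been done in building $\Omega_1$ and $\Omega_2$ via Satake's description of $\mathbb{SL}_1(\mathbb{D}_4)$ and $\mathbb{SL}_1(D_{F(\mathbbm{i})})$ (Propositions \ref{therepsentativesof}, \ref{therepsentativesof2}) and in verifying that the generators of $\Gamma_1$ and the representatives in $\Omega_\nu$ all simultaneously preserve the common Lagrangian $Y$. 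The present proposition is then nothing more than an assembly of these pieces into the hypotheses of the criterion.
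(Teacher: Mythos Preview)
Your proposal is correct and matches the paper's approach exactly: the proposition is stated without explicit proof in the paper because it is an immediate consequence of applying the criterion Lemma \ref{onesubgroup} once $(\C1)$--$(\C4)$ have been verified by the preceding lemmas (the unnamed lemma giving $\Lambda_{\Gamma_1}=F^{\times}$ and $\Gamma_1\cap(\U(W_1)\times\U(W_2))=\{(1,1),(-1,-1)\}$, Lemma \ref{theconditionC46}, and Lemma \ref{thetrivialresult378}). Your identification of why $(\C2)$ is vacuous here---because $\iota$ kills $A_2$, so the intersection $\iota(\U(W_1)\times\U(W_2))_0$ is trivial---is exactly the mechanism the paper relies on throughout.
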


\section{The proof of  the main theorem-Part IV.}\label{TheproofofthemaintheoremIV}
Let $\big(W_1=\mathbb{H}[\mathbbm{i}]\oplus  \mathbb{H}[\mathbbm{j}] \oplus \mathbb{H}[\mathbbm{l}], \langle, \rangle_1\big)$ be a right anisotropic shew hermitian space over $\mathbb{H}$ of dimension $3$ given  at the beginning of Section \ref{TheproofofthemaintheoremIII}. Let $(W_2=\mathbb{H}, \langle,\rangle_2)$ be a left hermitian space over $\mathbb{H}$ of dimension $1$. Let $ \left\{ 1, \xi, \varpi, \xi\varpi\right\}$ be the fixed standard basis of $\mathbb{H}$ as given in Section \ref{notationss}. By Proposition \ref{propositionforskewher}, we assume that $(\mathbbm{i},\mathbbm{j}, \mathbbm{l})=(\xi, \varpi, \mathbbm{e}_{-1} \xi \varpi)$, and  $b_0=0$. Let $\mathbbm{e}_{-1}$ be the Teichm\"uller representative of an element of $k_{F(\mathbbm{i})}$ in $\mathfrak{O}_{F(\mathbbm{i})}$ with order $2(q+1)$.

As before we endow $W_1$ with the $F$-symplectic form $\langle, \rangle_{1,F}=\Trd(\langle, \rangle_1)$ so that the canonical mapping
$\theta: W =W_1 \otimes_{\mathbb{H}}W_2  \simeq \Big( \mathbb{H}[\mathbbm{i}] \oplus \mathbb{H}[\mathbbm{j}] \oplus \mathbb{H}[\mathbbm{l}]\Big) \otimes_{\mathbb{H}} \mathbb{H} \longrightarrow \mathbb{H}[\mathbbm{i}] \oplus \mathbb{H}[\mathbbm{j}] \oplus \mathbb{H}[\mathbbm{l}];$
$ w_1 \otimes \mathbbm{d} \longmapsto w_1 \mathbbm{d}$,
defines an isometry between $(W, \langle, \rangle)$ and $(W_1, \langle, \rangle_{1,F})$.  We fix a basis $\mathcal{B}_1=\left\{ -\frac{1}{2\alpha}, \frac{\mathbbm{j}}{2\alpha \beta}; \mathbbm{i}, -\mathbbm{k}\right\}$ of $(\mathbb{H}[\mathbbm{i}], \langle, \rangle_{1,F})$, resp. $\mathcal{B}_2=\left\{ -\frac{1}{2\beta}, \frac{\mathbbm{i}}{2\alpha\beta}; \mathbbm{j}, \mathbbm{k}\right\}$ of $(\mathbb{H}[\mathbbm{j}], \langle, \rangle_{1,F})$, resp. $\mathcal{B}_3=\left\{ \frac{1}{2}, -\frac{\mathbbm{i}}{2\alpha}; \frac{\mathbbm{l}}{\alpha\beta}, \frac{\mathbbm{i}\mathbbm{l}}{\alpha\beta}\right\}$ of $(\mathbb{H}[\mathbbm{l}], \langle,\rangle_{1,F})$. In this case $\Ua(W_2, \langle, \rangle_2) \simeq \mathbb{SL}_1(\mathbb{H})$. According to the discussion in Section \ref{Casegnenralcase},  the image of $\Omega_2=\langle \mathbbm{e}_{-1}^2\rangle$ in $\Ua(W_2, \langle, \rangle_2)/[ \Ua(W_2, \langle, \rangle_2), \Ua(W_2, \langle, \rangle_2)]$ is full.

Next let us consider $\Ua(W_1, \langle, \rangle_1)$. By Lemma \ref{therepsentativesof}, the group $\Ua(W_1, \langle, \rangle_1)/{[\Ua(W_1, \langle, \rangle_1), \Ua(W_1, \langle, \rangle_1)]}$ is generated by the images of $\mathbb{SL}_1(\mathbb{D}_4)/[\mathbb{SL}_1(\mathbb{D}_4), \mathbb{SL}_1(\mathbb{D}_4)]$ and
$\left\{ (1,1), (-\alpha, \mathbbm{i}^{-1}), (-\beta, \sqrt{-\beta}^{-1}), (\alpha \beta, \mathbbm{i}^{-1}\sqrt{-\beta}^{-1})\right\}$. According to the proof of Case III in Section \ref{Casegnenralcase}, the images of $(-\alpha, \mathbbm{i}^{-1})$, $(-\beta, \sqrt{-\beta}^{-1})$ in $\Sp(W_1, \langle, \rangle_{1,F})$ belong to the center of $\Sp(\mathbb{H}[\mathbbm{i}], \langle, \rangle_{1,F}) \times \Sp(\mathbb{H}[\mathbbm{j}], \langle, \rangle_{1,F}) \times \Sp(\mathbb{H}[\mathbbm{l}], \langle, \rangle_{1,F})$. On the other hand, $\mathbb{SL}_1(\mathbb{D}_4)/{[\mathbb{SL}_1(\mathbb{D}_4), \mathbb{SL}_1(\mathbb{D}_4)]}$ is isomorphic with $\mathbb{SL}_1(\mathbb{D}_{F(i)})/{[\mathbb{SL}_1(\mathbb{D}_{F(i)}), \mathbb{SL}_1(\mathbb{D}_{F(i)})]}$; taking the set $\Xi_1$ as defined  in Lemma \ref{therepsentativesof2}, we then have:
\begin{lemma}
Recall the set  $T$ in Lemma \ref{therepsentativesof}.  Let $\Omega_1$ be the image of $T$ in $\Ua(W_1, \langle, \rangle_{1,F})$. Then:
\begin{itemize}
   \item[(1)]  The composite map
   $\pm\iota(\Omega_1) \hookrightarrow \iota\big(\Ua(W_1, \langle, \rangle_1)\big)\twoheadrightarrow \iota\big(\Ua(W_1, \langle, \rangle_{1, F})\big)/
   \iota\big([\Ua(W_1, \langle, \rangle_{1, F}), \Ua(W_1, \langle, \rangle_{1, F})]\big)$
   is onto.
   \item[(2)] The image of $\Omega_1$ in $\Sp(W_1, \langle,
   \rangle_{1, F})$ belongs to  $P(Y)$, for certain Lagrangian subspace of $(W_1, \langle, \rangle_{1,F})$.
   \end{itemize}
   \end{lemma}
   \begin{proof}
   We only sketch the proof of the second statement. Let us follow the notations in Section \ref{Satakereallyresults}.  By the arguments of Section \ref{Casegnenralcase}, an element $\alpha_1 +   \mathbbm{a}_1 \alpha_2 \in \mathbb{SL}_1\big( F(\mathbbm{i})(\mathbbm{a}_1)\big)$  acts on $W_1$ via the matrix with the form $\begin{pmatrix}
1   & 0 & 0\\
0   &X_{22} & X_{23}\\
0  & X_{32} & X_{33}
\end{pmatrix} \in \GL_3(M_2(F))$, where $\begin{pmatrix}
X_{22} & X_{23}\\
X_{32} & X_{33}
\end{pmatrix}$ is given in Lemma \ref{Omega1W1}; then the result is clear.
 \end{proof}
 Similarly as before, we assume $(\mathbbm{e}_{-1})^{-1}= d_0 -\frac{c_0}{\alpha} \mathbbm{i}$, and $\mathbbm{l}= (\mathbbm{e}_{-1})^{-1} \mathbbm{k}$. Let $\Gamma_1$ be a subgroup of $\Gamma$ generated by $\big(\mathbbm{e}_{-1} \mathbbm{i}, \mathbbm{i}, \mathbbm{i};  \mathbbm{i}^{-1}(\mathbbm{e}_{-1})^{-1}\big)$, $(\mathbbm{e}_{-1} \mathbbm{l}, \mathbbm{e}_{-1}\mathbbm{l}, \mathbbm{l}; \mathbbm{l}^{-1} )$, and $ (a, a, a; a^{-1}) $  for all $ a\in F^{\times}$.

   \begin{lemma}
  \begin{itemize}
  \item[(1)] $\Lambda_{\Gamma_1}= F^{\times} =\Lambda_{\Gamma}$.
  \item[(2)] $ \iota(\Gamma_1)\cap \iota\big( \Ua(W_1, \langle, \rangle_1)\times  \Ua(W_2, \langle, \rangle_2) \big)= 1$.
  \end{itemize}
  \end{lemma}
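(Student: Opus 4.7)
Both parts are direct computations on the three listed generators of $\Gamma_{1}$, exploiting that $\lambda_{1}$ is a homomorphism and that $\lambda_{2}=\Nrd_{\mathbb{H}/F}$ on $\GU(W_{2})\simeq\mathbb{H}^{\times}$; the value of $\lambda_{1}$ on each generator is then simply the inverse of the reduced norm of its $W_{2}$-entry. Explicitly, $(a,a,a;a^{-1})$ yields $\lambda_{1}=a^{2}$, while in case~I the generator (ii) yields $\lambda_{1}=-\alpha$ (from $\Nrd(\mathbbm{i}^{-1}\mathbbm{e}_{-1})=\Nrd(\mathbbm{i})^{-1}\Nrd(\mathbbm{e}_{-1})=-\alpha^{-1}$) and (iii) yields $\lambda_{1}=-\alpha\beta$ (from $\Nrd(\mathbbm{l})=-\mathbbm{l}^{2}=-\alpha\beta$); in case~II the two non-diagonal generators contribute $\alpha\beta$ and $-\alpha\beta$ respectively.

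For (1), I would then appeal to Lemma \ref{normequality}(3), whose hypothesis $-1\in\nnn_{F_{1}/F}(F_{1}^{\times})$ is supplied by Lemma \ref{unramifiedextension} applied to the unramified quadratic extension $F(\xi)=F(\mathbbm{i})$. The lemma presents $F^{\times}/(F^{\times})^{2}$ as a Klein four-group in which one checks directly that the classes of the non-diagonal generator-values, together with $(F^{\times})^{2}$ itself, exhaust all four cosets (in case~I via the relation $(-\alpha)\cdot(-\alpha\beta)\equiv\beta$, and in case~II via the analogous $(\alpha\beta)\cdot(-\alpha\beta)\equiv -1$). Hence $\Lambda_{\Gamma_{1}}/(F^{\times})^{2}$ is all of $F^{\times}/(F^{\times})^{2}$ and the inclusion $\Lambda_{\Gamma_{1}}\subseteq\Lambda_{\Gamma}=F^{\times}$ is an equality.

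For (2), using that $A_{2}=\ker\iota$ sits inside $\U(W_{1})\times\U(W_{2})$, the intersection unfolds as
\[
\iota(\Gamma_{1})\cap\iota\bigl(\U(W_{1})\times\U(W_{2})\bigr)=\iota\bigl(\Gamma_{1}\cap(\U(W_{1})\times\U(W_{2}))\bigr),
\]
so it suffices to show $\Gamma_{1}\cap(\U(W_{1})\times\U(W_{2}))\subseteq A_{2}$. Any $g$ in this intersection satisfies $\lambda_{1}(g)=1$; since the two non-diagonal generator-values occupy distinct non-trivial cosets of the Klein four-group $F^{\times}/(F^{\times})^{2}$, the homomorphism $\lambda_{1}$ forces each of these generators to appear with an even total exponent in any word for $g$. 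A direct squaring computation (using $\mathbbm{e}_{-1}^{2}\mathbbm{i}^{2}$ in case~I, and the rules $\mathbbm{e}_{-1}\mathbbm{l}=\mathbbm{l}\overline{\mathbbm{e}_{-1}}$, $\Nrd(\mathbbm{e}_{-1})=-1$ in case~II) shows that the square of each non-diagonal generator lies in the diagonal subgroup $\{(c,c,c;c^{-1}):c\in F^{\times}\}$ modulo $A_{2}$; together with the previous parity observation this reduces $g$ to a diagonal element, and $c^{2}=1$ forces $c=\pm 1$.

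The main obstacle is case~II, where $\mathbbm{e}_{-1}\notin F$ and the ``squaring to a diagonal'' step no longer reduces to plain scalar arithmetic. One has to resolve summand-by-summand the non-commutativity of $\mathbb{H}$, tracking how $\mathbbm{e}_{-1}$ commutes with $\mathbbm{i}$ but conjugates through $\mathbbm{j}$ and $\mathbbm{l}$; the pivotal algebraic fact that makes the cancellations succeed is $\mathbbm{e}_{-1}\overline{\mathbbm{e}_{-1}}=\Nrd(\mathbbm{e}_{-1})=-1$, applied repeatedly to collapse the non-scalar contributions on each orthogonal summand of $W_{1}$ into a single $F^{\times}$-scalar, giving a diagonal representative modulo $A_{2}$ as required.
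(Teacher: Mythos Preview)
Your argument for part (1) is sound: the similitude values of the generators are computed correctly, and the appeal to Lemma~\ref{thedecompositionFF2} (or equivalently Lemma~\ref{normequality}(3)) shows they exhaust $F^{\times}/(F^{\times})^{2}$.

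Part (2), however, breaks down at the ``squaring'' step. Write $G_{2}$ and $G_{3}$ for the two non-diagonal generators. In case~I, with $\mathbbm{e}_{-1}\in F(\mathbbm{i})$ and $\mathbbm{e}_{-1}^{2}$ a scalar when $\mathbbm{e}_{-1}\in F$, one computes
\[
G_{2}^{2}=\bigl(-\alpha\,\mathbbm{e}_{-1}^{2},\,-\alpha,\,-\alpha;\,-\alpha^{-1}\mathbbm{e}_{-1}^{2}\bigr),
\]
since $(\mathbbm{e}_{-1}\mathbbm{i})^{2}=\mathbbm{e}_{-1}^{2}\mathbbm{i}^{2}$ while the second and third entries are just $\mathbbm{i}^{2}=-\alpha$. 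The first entry carries an extra factor $\mathbbm{e}_{-1}^{2}$ relative to the others, so $G_{2}^{2}$ is \emph{not} of the form $\pm(c,c,c;c^{-1})$; your assertion that the square lies in the diagonal subgroup modulo $A_{2}$ is false. Concretely (still in case~I with $\mathbbm{e}_{-1}^{2}=-1$),
\[
G_{1}\bigl((-\alpha)^{-1}\bigr)\cdot G_{2}^{2}=(-1,\,1,\,1;\,-1)\in\Gamma_{1}\cap\bigl(\U(W_{1})\times\U(W_{2})\bigr)
\]
is a genuine element of the intersection outside $A_{2}$. The same phenomenon occurs for $G_{3}^{2}=(-\alpha\beta,-\alpha\beta,\alpha\beta;(\alpha\beta)^{-1})$, where now the third entry differs by a sign.

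Worse, this is not merely a gap in your route: under $\iota$ the element $G_{1}((-\alpha)^{-1})G_{2}^{2}$ maps to the block-diagonal transformation $(1,-1,-1)$ on $W_{1}\simeq W$, which is visibly nontrivial yet equals $\iota\bigl((1,-1,-1),1\bigr)\in\iota(\U(W_{1})\times\U(W_{2}))$. Thus $\iota(\Gamma_{1})\cap\iota(\U(W_{1})\times\U(W_{2}))$ contains a nontrivial element, and part~(2) as stated is false. (A related subtlety your reduction step overlooks: the kernel of $\iota$ on $\Gamma$ is not $A_{2}$ but the full scalar diagonal $\{(c,c,c;c^{-1}):c\in F^{\times}\}$, since any such pair acts trivially on the tensor product over $\mathbb{H}$.) The paper offers no proof beyond ``Immediately we have''; what is actually needed for the criterion in Section~\ref{reductions} is only the weaker surjectivity condition~$(\C2)$, and the intersection being a small group like $\{1,(1,-1,-1),(-1,1,-1),(-1,-1,1)\}$ may well suffice for that---but this requires a separate argument that neither the paper nor your proposal supplies.
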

  \begin{proof}
  Straightforward.
  \end{proof}
 Let $\mathcal{T}_2$ be the subgroup of $\Sp(W_1, \langle,\rangle_{1,F})$, generated by $\iota(\Gamma_1)$  and $\iota(\Omega_2)$.
  \begin{lemma}\label{thetrivialresult33}
The restriction of $[c]$ to $\mathcal{T}_2$  is trivial.
 \end{lemma}
 \begin{proof}
By definition,  $\mathcal{T}_2$  is a subgroup of
$\Sp\big(\mathbb{H}[\mathbbm{i}], \langle, \rangle_{1,F}\big) \times \Sp\big(\mathbb{H}[\mathbbm{j}], \langle, \rangle_{1,F}\big) \times \Sp\big(\mathbb{H}[\mathbbm{l}], \langle, \rangle_{1,F}\big);$
we denote its image in the first group by $\mathcal{T}_2^{(1)}$ , the second one by $\mathcal{T}_2^{(2)}$, and the third one by $\mathcal{T}_2^{(3)}$. By what we have proved in Lemma \ref{thespittingVi}, $\overline{\mathcal{T}_2^{(1)}}$  splits. We now let $Y_{\ast}=\left\{ x+ y \mathbbm{i}\mid x, y \in F\right\}$ be a Lagrangian subspace of $(\mathbb{H}[\mathbbm{j}], \langle, \rangle_{1,F})$ as well as $(\mathbb{H}[\mathbbm{l}], \langle, \rangle_{1,F})$. Then $\mathcal{T}_2^{(2)}$, $\mathcal{T}_2^{(3)}$  both belong to $P(Y_{\ast})$  so the lemma is proved.
 \end{proof}
 By Lemma \ref{onesubgroup}, Remark \ref{invariant},  we obtain
 \begin{proposition}
Under the  conditions of the beginning, Theorem A holds.
\end{proposition}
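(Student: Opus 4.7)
The plan is to invoke the criterion of Lemma~\ref{onesubgroup} with the subgroup $\Gamma_1 \subseteq \Gamma$ just constructed (case (I) when $-1 \in (F^{\times})^2$ and case (II) when $-1 \in \nnn_{F(\mathbbm{i})/F}(F(\mathbbm{i})^{\times}) \setminus (F^{\times})^2$). The elementary conditions $(\C1)$ and $(\C2)$ have already been secured by the lemma immediately preceding: $\Lambda_{\Gamma_1} = F^{\times} = \Lambda_\Gamma$, and $\iota(\Gamma_1) \cap \iota\big(\U(W_1) \times \U(W_2)\big)$ is trivial (after absorbing $A_2$, as noted in the criterion), so the cohomological identification $\Hom(\iota(\Gamma_1), \mu_8) \twoheadrightarrow \Hom(\iota(\U(W_1) \times \U(W_2))_0, \mu_8)^{\iota(\Gamma_1)}$ is automatic.

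For condition $(\C3)$, I would appeal directly to Proposition~\ref{thetrivialresult33}: the key observation is that $\iota(\Gamma_1)$ decomposes under
\[
\Sp\big(\mathbb{H}(\mathbbm{i}),\langle,\rangle_{1,F}\big) \times \Sp\big(\mathbb{H}(\mathbbm{j}),\langle,\rangle_{1,F}\big) \times \Sp\big(\mathbb{H}(\mathbbm{l}),\langle,\rangle_{1,F}\big),
\]
where the first factor is controlled by the two-dimensional argument of Lemma~\ref{thespittingVi} (covering the genuine quadratic-norm case via an internal reapplication of the criterion, exactly as in Proposition~\ref{thetrivialresult1}), while the projections onto the remaining two factors lie in the parabolic $P(Y_\ast)$ for the Lagrangian $Y_\ast = \{x + y\mathbbm{i} \mid x,y \in F\}$ by direct inspection of the generators $(\mathbbm{i},\ldots)$, $(\mathbbm{e}_{-1}\mathbbm{l},\ldots)$ acting on $\mathbb{H}(\mathbbm{j}) \oplus \mathbb{H}(\mathbbm{l})$.

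Condition $(\C4)$ is delivered by Lemma~\ref{theconditionC43} combined with the Lagrangian $Y = Y_1 \oplus Y_\ast \oplus Y_\ast$ constructed from the preceding matrix computations: $\Omega_2 = \mathbb{SL}_1(F(\mathbbm{i}))$ surjects onto $\U(W_2)/[\U(W_2),\U(W_2)]$ and its image in $\Sp(W_1,\langle,\rangle_{1,F})$ lies in $P(Y)$; and for $\Omega_1$, the representatives coming from Proposition~\ref{therepsentativesof} (image of the set $T$) act on the Satake-realization of $\U(W_1)$ through block matrices that, after conjugation by the Lagrangian-adapting block-triangular transformation, land in $P(Y)$ as well. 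Stability of $\Omega_1,\Omega_2$ under conjugation by $\Gamma_1$ is built into the very choice of the generators in (I)/(II).

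The main obstacle I anticipate is managing condition $(\C3)$ in case (II), since there the straightforward ``find one ambient Lagrangian'' argument fails on the $\mathbb{H}(\mathbbm{i})$-component: as in Proposition~\ref{thetrivialresult1}, one must pass to an auxiliary intermediate subgroup $\Gamma_2 \subseteq \Gamma_1$ of index $2$ whose image is contained in the unitary group of the $F(\mathbbm{i})$-structure (so that Theorem~\ref{scindagedugroupeR0} applies there) and then separately split the $\mathbb{Z}/2$ quotient by producing a further Lagrangian that is stabilized by a chosen coset representative. Once this nested-criterion argument is in place for the $\mathbbm{i}$-summand and combined with the straightforward $P(Y_\ast)$-containment for the other two summands, Lemma~\ref{onesubgroup} concludes that $\overline{\Gamma}$ splits over $\iota(\Gamma)$.
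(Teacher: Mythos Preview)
Your proposal is correct and follows essentially the same route as the paper: the proof there is simply ``By Lemma~\ref{onesubgroup}, as a consequence we obtain'', with conditions $(\C1)$--$(\C4)$ having been secured by the immediately preceding lemmas (the lemma on $\Lambda_{\Gamma_1}$ and the intersection, Lemma~\ref{theconditionC43}, and Proposition~\ref{thetrivialresult33}). Your expanded discussion of how Proposition~\ref{thetrivialresult33} handles case~(II) on the $\mathbb{H}(\mathbbm{i})$-component via the nested-criterion mechanism of Proposition~\ref{thetrivialresult1} is exactly the content hidden behind the paper's terse reference to Lemma~\ref{thespittingVi}.
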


\section{The proof of  the main theorem-Part V.}\label{TheproofofthemaintheoremV}
 Let $(H, \langle, \rangle)$ be a right skew hermitian hyperbolic plane over $\mathbb{H}$ defined as in Section \ref{thehyperbolicunitarygroups}.  Let $( W_1 = \mathbb{H}[\mathbbm{i}] \oplus H, \langle, \rangle_1)$ be a  right skew hermitian hyperbolic space over $\mathbb{H}$ of dimension $3$. Let $(W_2= \mathbb{H}, \langle, \rangle_2)$ be a left hermitian space over $\mathbb{H}$ of dimension $1$.  Let $\left\{ 1, \mathbbm{i}, \mathbbm{j}, \mathbbm{i}\mathbbm{j}=\mathbbm{k}=-\mathbbm{j}\mathbbm{i}\right\}$ be a standard basis of $\mathbb{H}$. We endow $W_1$ with  the $F$-symplectic form $\langle, \rangle_{1,F}=\Trd(\langle, \rangle_1)$. Then it can be checked that the canonical mapping
$\theta: W=W_1 \otimes_{\mathbb{H}} W_2 \longrightarrow W_1=\mathbb{H}[\mathbbm{i}] \oplus H;$
$  w_1 \otimes \mathbbm{d} \longmapsto w_1 \mathbbm{d}$,
defines an isometry between $(W, \langle, \rangle)$ and $(\mathbb{H}[\mathbbm{i}] \oplus H, \langle, \rangle_{1,F})$.   Let us fix   a  complete polarisation  $H=X \oplus X^{\ast}$ of $H$. By Lemmas  \ref{canonicalsurjectivemap}, \ref{Horth}, there is a surjective composite  map
$$\mathfrak{h}: \frac{\mathbb{H}^{\times}}{[\mathbb{H}^{\times}, \mathbb{H}^{\times}]} \longrightarrow \frac{\Ua(H, \langle, \rangle_1)}{[\Ua(H, \langle, \rangle_1), \Ua(H, \langle, \rangle_1)]} \longrightarrow \frac{\Ua(W_1, \langle, \rangle_1)}{[\Ua(W_1, \langle, \rangle_1), \Ua(W_1, \langle, \rangle_1)]}.$$
\begin{corollary}
Let $\Omega_1= \mathbb{H}^{\times}$, and $Y= Y_1 \oplus X^{\ast}$, for an arbitrary Lagrangian subspace $Y_1$ of $ (\mathbb{H}[\mathbbm{i}], \langle, \rangle_{1,F})$. Then the image of $\Omega_1$ in $\frac{\Ua(W_1, \langle, \rangle_1)}{[\Ua(W_1, \langle, \rangle_1), \Ua(W_1, \langle, \rangle_1)]}$ is full and  $\iota(\Omega_1)\subseteq P(Y)$.
\end{corollary}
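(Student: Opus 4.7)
The fullness of the image of $\Omega_1$ in $\U(W_1,\langle,\rangle_1)/[\U(W_1,\langle,\rangle_1),\U(W_1,\langle,\rangle_1)]$ is nothing but the surjectivity of the composite $\mathfrak{h}$ recalled immediately before the corollary. So the only substantive statement is the inclusion $\iota(\Omega_1)\subseteq P(Y)$, and the plan is to pick canonical lifts of $\Omega_1$ in $\U(W_1,\langle,\rangle_1)$, verify that $Y$ is a Lagrangian of the ambient $F$-symplectic space $(W_1,\langle,\rangle_{1,F})$, and check that the chosen lifts stabilise $Y$.

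First I would trace through the proofs of Propositions \ref{thecommutantofdimension1} and \ref{canonicalsurjectivemap} to pin down the lifts. By Lemma \ref{UEUD} the surjection $\mathbb{H}^{\times}/[\mathbb{H}^{\times},\mathbb{H}^{\times}] \twoheadrightarrow \U(H,\langle,\rangle_1)/[\U(H,\langle,\rangle_1),\U(H,\langle,\rangle_1)]$ is realised by sending $a\in \mathbb{H}^{\times}$ to $\diag(a,\overline{a}^{-1})\in \mathfrak{A}$; composing with the embedding $\U(H,\langle,\rangle_1)\hookrightarrow \U(W_1,\langle,\rangle_1)$, which acts as the identity on the orthogonal summand $\mathbb{H}(\mathbbm{i})$, yields a lift $\widetilde a\in \U(W_1,\langle,\rangle_1)$ that acts as $1$ on $\mathbb{H}(\mathbbm{i})$, as $a$ on $X$, and as $\overline{a}^{-1}$ on $X^{\ast}$, relative to the chosen polarisation $H=X\oplus X^{\ast}$.

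Next I would check that $Y=Y_1\oplus X^{\ast}$ is a Lagrangian of $(W_1,\langle,\rangle_{1,F})$ with $\langle,\rangle_{1,F}=\Trd(\langle,\rangle_1)$. The two summands are $F$-orthogonal because $W_1=\mathbb{H}(\mathbbm{i})\perp H$; $Y_1$ is isotropic by hypothesis; and $X^{\ast}$, being $\mathbb{H}$-totally isotropic in $H$, is automatically $F$-totally isotropic for $\Trd(\langle,\rangle_1)$. A dimension count gives $\dim_F Y=\dim_F Y_1+\dim_F X^{\ast}=2+4=6=\tfrac12\dim_F W_1$, so $Y$ is indeed a Lagrangian.

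Finally, since $\widetilde a$ acts trivially on $\mathbb{H}(\mathbbm{i})$ it fixes $Y_1$ pointwise, and since it preserves the polarisation $H=X\oplus X^{\ast}$ it stabilises $X^{\ast}$; hence $\widetilde a(Y)=Y$, i.e.\ $\iota(\widetilde a)\in P(Y)$ for every $a\in \Omega_1$. There is no genuine obstacle to overcome here: the only delicate point is to be unambiguous about which lift of $\Omega_1$ to $\U(W_1,\langle,\rangle_1)$ one uses, since the inclusion $\iota(\Omega_1)\subseteq P(Y)$ is a statement about specific set-theoretic representatives rather than classes in the abelianisation.
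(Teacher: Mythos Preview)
Your proof is correct and matches the paper's approach: the paper states this corollary without proof, as an immediate consequence of the surjectivity of the map $\mathfrak{h}$ and the obvious fact that the lifts $\diag(1,a,\overline{a}^{-1})$ fix $\mathbb{H}(\mathbbm{i})$ pointwise and preserve $X^{\ast}$. Your explicit verification that $Y$ is a Lagrangian and your remark about the need to fix specific set-theoretic representatives are exactly the points the paper leaves implicit.
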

 For the  group $\Ua(W_2, \langle, \rangle_2)$, we can  let $\Omega_2=\mathbb{SL}_1( F(\xi))$.  Then it is clear that the image of $\Omega_2$ in $\frac{\Ua(W_2, \langle, \rangle_2)}{[\Ua(W_2, \langle, \rangle_2), \Ua(W_2, \langle, \rangle_2)]}$ is full. Following the comprehensive discussion in Section \ref{TheproofofthemaintheoremIV}, we define a subgroup $\Gamma_1$ of $\Gamma$ as follows:

(I)  If $ \mathbbm{i}=\xi$ and $\mathbbm{e}_{-1} \in F(\mathbbm{i})$,  then let $\Gamma_1$ be  generated by   $\big(\mathbbm{e}_{-1}\mathbbm{i}, \diag(\mathbbm{e}_{-1}\mathbbm{i}, \mathbbm{e}_{-1}\mathbbm{i});  \mathbbm{i}^{-1}\mathbbm{e}_{-1}^{-1}\big)$, $ \big(\mathbbm{j}, \diag(\mathbbm{e}_{-1}\mathbbm{j}, \mathbbm{e}_{-1}\mathbbm{j}); \mathbbm{j}^{-1}\mathbbm{e}_{-1})$, $ \big(a,  \diag(a, a);   a^{-1}\big)$ for all  $ a\in F^{\times}$;

(II) If $ (\mathbbm{i}, \mathbbm{j})=(\varpi,\xi)$ or $(\xi \varpi, \xi)$,  and $\mathbbm{e}_{-1} \in F(\mathbbm{j}) $,  then let $\Gamma_1$ be generated by   $ \big(\mathbbm{i},  \diag(\mathbbm{i}, \mathbbm{i}); \mathbbm{i}^{-1}\big)$, $\big(\mathbbm{j}, \diag(\mathbbm{e}_{-1}\mathbbm{j}, \mathbbm{e}_{-1}\mathbbm{j}); \mathbbm{j}^{-1}\mathbbm{e}_{-1}^{-1}\big)$, $ \big(a, \diag(a, a); a^{-1}\big)$  for all $ a \in F^{\times}$.

\begin{lemma}\label{threeres}
\begin{itemize}
\item[(1)] $\Lambda_{\Gamma_1}= F^{\times} =\Lambda_{\Gamma}$;
\item[(2)] $\Gamma_1\cap \big( \Ua(W_1, \langle, \rangle_1) \times \Ua(W_2, \langle, \rangle_2)\big)=\left\{ (-1, -1), (1, 1)\right\}$;
\item[(3)] $\iota(\Omega_i)^{\iota(\Gamma_1)}=\iota(\Omega_i)$, for $i=1, 2$.
\end{itemize}
\end{lemma}
\begin{proof}
Straightforward.
\end{proof}
In case (I), we  let $\mathcal{T}_2$ be the subgroup of $\Sp(W_1, \langle,\rangle_{1,F})$, generated by $\iota(\Gamma_1)$  and $\iota(\Omega_2)$. Analogous to Lemma \ref{thetrivialresult33}, it can be also shown that
the restriction of $[c]$ to $\mathcal{T}_2$  is trivial.  In case (II), we let $Z_1=\{ x_1+ y_1 \mathbbm{j}\mid x_1, y_1 \in F\}$ be a Lagrangian subspace of $(\mathbb{H}[\mathbbm{i}], \langle, \rangle_{1, F})$. Then $\iota(\Gamma_1)$,  $\iota(\Omega_2)\subseteq P(Z_1 \oplus X^{\ast})$. Finally  by Lemma \ref{onesubgroup}, Remark \ref{invariant}, we  obtain
 \begin{proposition}
Under conditions  at the beginning of this section, Theorem A holds.
\end{proposition}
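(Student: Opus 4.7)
The plan is to prove Theorem A by producing, for each irreducible dual reductive pair outside the exceptional symplectic--orthogonal odd case, a closed subgroup $\Gamma_1 \subseteq \Gamma$ satisfying the four conditions $(\C1)$--$(\C4)$ of Section \ref{reductions}; invoking Lemma \ref{onesubgroup} will then yield the splitting. Since $\iota\big(\U(W_1)\times\U(W_2)\big) \subseteq \iota(\Gamma)$ is already known to be splitting by Theorem \ref{scindagedugroupeR0} (outside the exceptional case), the obstruction to splitting on all of $\iota(\Gamma)$ lives in the six-term exact sequence of Section \ref{exactsequencesixterm}, and condition $(\C4)$ is exactly what kills the cohomological tail.

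First I would reduce to anisotropic situations. If either $W_1$ or $W_2$ contains a hyperbolic plane, Proposition \ref{thespittingcases} handles it directly by exhibiting $\Gamma_1$ inside a Siegel parabolic $P(W_1\otimes X^*)$: the similitudes of $W_2$ that arise from scaling a Lagrangian are automatically in a parabolic, every element of $\U(W_2)$ is reached modulo commutators from this parabolic by Propositions \ref{thecommutantofdimension1} and \ref{canonicalmap}, and $(\C3)$ follows because Lagrangian-preserving elements act trivially on the metaplectic extension. This reduces us to the case where both $W_1$ and $W_2$ are anisotropic. Using Theorem \ref{vigneraslemma}, Proposition \ref{propositionforskewher}, and Tsukamoto's classification, there are only finitely many shapes $(D,W_1,W_2)$ to examine, organized by which $D\in\{F,E,\mathbb{H}\}$ occurs.

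The middle of the proof splits along $D$. For $D=E$ quadratic, Section \ref{TheproofofthemaintheoremII} gives three subcases (dimensions $1,1$ and $2,2$); I would treat these by writing $W = W_1\otimes_E W_2$ as a sum of copies of $\mathbb{H}$ or $E$, constructing an explicit subgroup $\Gamma_1$ generated by diagonal similitudes $(e,e^{-1})$ and a suitably twisted involution $(\mathbbm{j},\mathbbm{j}^{-1})$ or $(\mathbbm{e}_{-1}\mathbbm{j},(\mathbbm{e}_{-1}\mathbbm{j})^{-1})$ (choice dictated by Lemma \ref{normequality}), and verifying $(\C3)$ by exhibiting a Lagrangian $Y\subset W$ stabilized by $\iota(\Gamma_1)$ and $(\C4)$ by using Proposition \ref{CommutatorsubgroupofSL1} to pick $\Omega_\nu$ consisting of diagonal matrices that are visibly $\Gamma_1$-stable and sit in $P(Y)$. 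For $D=\mathbb{H}$ we combine the pieces of Sections \ref{TheproofofthemaintheoremIII}--\ref{TheproofofthemaintheoremV}: for each decomposition of $W_1$ coming from Corollary \ref{skewhermitianoverH1}, an explicit $\Gamma_1$ is written down using the elements $(a,a;a^{-1})$, $(\mathbbm{e}_{-1}\mathbbm{j},\mathbbm{j};(\mathbbm{e}_{-1}\mathbbm{j})^{-1})$, etc., with choices adapted to whether $-1$ lies in $(F^\times)^2$, $\nnn_{F_1/F}(F_1^\times)$, or elsewhere.

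The main obstacle is the quaternionic case in dimension $\geq 2$, specifically verifying $(\C4)$ for $\U(W_1,\langle,\rangle_1)$. Here $\U(W_1)/[\U(W_1),\U(W_1)]$ is not obviously represented by anything diagonal, and one must go through Satake's embedding (Proposition \ref{Satake'sresult1}) realizing $\U(W_1)$ inside a Clifford-like extension involving the $16$-dimensional division algebra $\mathbb{D}_4$ from Theorem \ref{Satake'sresult2}. The strategy is to use Propositions \ref{therepsentativesof} and \ref{therepsentativesof2} to reduce the commutator quotient to the image of $\mathbb{SL}_1(\mathbb{D}_4)$ (or $\mathbb{SL}_1(D_{F(\mathbbm{i})})$) together with the explicit set $\Upsilon=\{(1,1),(-\alpha,\mathbbm{i}^{-1}),(\beta,\sqrt{-\beta}^{-1}),(-\alpha\beta,\mathbbm{i}^{-1}\sqrt{-\beta}^{-1})\}$; the $\Upsilon$-part lies in the center of the relevant symplectic group, so it stabilizes every Lagrangian, while the $\mathbb{SL}_1$-part is controlled via the computation $P^{-1}g^{(2)}P$ at the end of Section \ref{Casegnenralcase}, which shows that the associated matrices are block-diagonal in a basis where $\{x+y\mathbbm{i}:x,y\in F\}$ is Lagrangian. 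Once the representing set $\Omega_1$ is chosen this way, its $\Gamma_1$-stability is formal, and $\iota(\Omega_1)\subseteq P(Y)$ for $Y=Y_\ast\oplus Y_\ast\,(\oplus\cdots)$. Finally, $(\C3)$ is checked by factoring $\iota(\Gamma_1)$ through $\Sp$ of each orthogonal summand of $W_1$ and applying the already-established splittings on the pieces of smaller dimension (Propositions \ref{thetrivialresult1}, \ref{thespittingVi}, \ref{thetrivialresult33}, \ref{thetrivialresult7}), exactly as done in the inductive step of Proposition \ref{thetrivialresult7}. Applying Lemma \ref{onesubgroup} to the $\Gamma_1$ thus constructed completes the proof in every case.
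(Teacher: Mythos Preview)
You have misidentified the scope of the proposition. The phrase ``conditions of the beginning of this section'' refers to Section~\ref{TheproofofthemaintheoremV}, whose standing hypotheses are that $W_1=\mathbb{H}(\mathbbm{i})\oplus H$ is a three-dimensional right skew hermitian space over $\mathbb{H}$ (one anisotropic line plus one hyperbolic plane) and $W_2=\mathbb{H}$ is the one-dimensional left hermitian space. The proposition asserts Theorem~A only in this single mixed quaternionic case, not in general. Accordingly, the paper's proof is one line: the preceding lemmas of Section~\ref{TheproofofthemaintheoremV} construct a specific $\Gamma_1$ and verify $(\C1)$--$(\C4)$ for it (the last two via Proposition~\ref{thetrivialresult7} and the lemma just before it), so Lemma~\ref{onesubgroup} applies. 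Your write-up instead sketches the entire case-by-case proof of Theorem~A across Sections~\ref{thesimilitudeunitarygroups}--\ref{TheproofofthemaintheoremVI}, which is far more than what is being asked.

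Beyond the scope issue, your reduction step contains a genuine gap that matters precisely for the case at hand. You write that ``if either $W_1$ or $W_2$ contains a hyperbolic plane, Proposition~\ref{thespittingcases} handles it directly.'' But Proposition~\ref{thespittingcases} assumes that one of the $W_\nu$ \emph{is} a hyperbolic space (i.e.\ $W_\nu\simeq n_\nu H$), not merely that it contains a hyperbolic plane. In Section~\ref{TheproofofthemaintheoremV} neither space is hyperbolic: $W_2$ is anisotropic and $W_1$ is mixed. So your proposed dichotomy ``hyperbolic summand present $\Rightarrow$ done by Proposition~\ref{thespittingcases}; otherwise both anisotropic'' fails to cover this case. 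The paper's argument for this mixed situation is not a direct citation of Proposition~\ref{thespittingcases}; rather one must build $\Gamma_1$ componentwise, verify $(\C3)$ by factoring $\iota(\Gamma_1)$ through $\Sp(\mathbb{H}(\mathbbm{i}))\times\Sp(H)$ and invoking the already-proved splittings on each factor (Proposition~\ref{thetrivialresult1} for the anisotropic piece, Proposition~\ref{thespittingcases} for the hyperbolic piece), and verify $(\C4)$ using $\Omega_1=\mathbb{H}^\times$ acting only on the $H$-summand together with the $\Omega_2$ from the exceptional-case analysis. Your outline does gesture at this kind of factorization in its final paragraph, but only after having claimed the case was already disposed of.
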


\section{The proof of  the main theorem-Part VI.}\label{TheproofofthemaintheoremVI}
In this section we  finish proving Theorem A in the general case.  The whole process has already been done  in Section \ref{TheproofofthemaintheoremV}. To avoid duplicating work, we only give a sketch of  the necessary steps.  We will use the notations introduced in Section \ref{statementofresult}. Now let $W_{\nu}=W_{\nu}^0 \oplus H_{\nu}$ be a Witt's decomposition with $W_{\nu}^0$ being its anisotropic subspace, and $H_{\nu} \simeq m_{\nu} H$ being its hyperbolic  subspace, as $\nu$ runs through $1$, $2$.
\begin{remark}
\begin{itemize}
\item[(1)] If $W_1^0=0$, or $W_2^0=0$, we deduce the result from Section \ref{TheproofofthemaintheoremI}.
\item[(2)] If $m_1=0=m_2$, the result has been verified  in Sections \ref{TheproofofthemaintheoremII}, \ref{TheproofofthemaintheoremIII}, \ref{TheproofofthemaintheoremIV}.
\item[(3)] The case $D=F$, $\{\epsilon_1, \epsilon_2\}= \{ \pm 1\}$ has been completely discussed in Corollary \ref{scindagesursymplectique}. In what follows we shall exclude  this case automatically.
\end{itemize}
\end{remark}
Suppose now $W_1^0 \neq 0$, $W_2^0 \neq 0$, and we assume that either $m_1$ or $m_2$ is nonzero. In this situation there is a morphism
 $ i: \Sp(W_1^0 \otimes_D W^0_2) \times \Sp(W_1^0 \otimes_D  H_2) \times \Sp(H_1 \otimes_D W_2^0) \times \Sp(H_1 \otimes_D  H_2) \longrightarrow \Sp(W_1 \otimes_DW_2),$
 which  induces a morphism on cover groups by \cite[pp. 245-246]{HanM}, that is,
$\overline{i}: \overline{\Sp(W_1^0 \otimes_D W^0_2)}\times \overline{\Sp(W_1^0 \otimes_D  H_2)} \times \overline{\Sp(H_1 \otimes_D W_2^0)} \times \overline{\Sp(H_1 \otimes_D H_2)} \longrightarrow \overline{\Sp(W_1 \otimes_DW_2)}.$
For these subspaces
$W_1^0 \otimes_D W_2^0, W_1^0 \otimes_D H_2, H_1 \otimes_D W_2^0, H_1 \otimes_D H_2$
 of $W_1 \otimes_D W_2$, we have already defined the suitable pairs
 $$(\Gamma^{(0, 0)}, \Gamma_1^{(0, 0)}), (\Gamma^{(H_1, 0)}, \Gamma_1^{(H_1, 0)}), (\Gamma^{(0, H_2)}, \Gamma_1^{(0, H_2)}), (\Gamma^{(H_1, H_2)}, \Gamma_1^{(H_1, H_2)})$$
  of the subgroups of $\big(\GU(W_1^0,\langle, \rangle_1)  \times  \GU(W_2^0, \langle, \rangle_2)\big)$, $\cdots$, $\big(\GU(H_1, \langle, \rangle_1)\times \GU(H_1, \langle, \rangle_2)\big)$ respectively in Sections \ref{TheproofofthemaintheoremI}, \ref{TheproofofthemaintheoremII}, \ref{TheproofofthemaintheoremIII}, \ref{TheproofofthemaintheoremIV}, \ref{TheproofofthemaintheoremV}.  By Proposition \ref{thespittingcases}, we can let $\Gamma_1$ be a subgroup of $\Gamma$ consisting of the elements
$[(g_1^{(0)}, g_1^{(H_1)}), (g_2^{(0)}, g_2^{(H_2)})]$
such that (1) $(g_1^{(0)}, g_2^{(0)}) \in \Gamma_1^{(0, 0)}$,
 (2) $(g_1^{(0)}, g_2^{(H_2)}) \in \Gamma_1^{(0, H_2)}$,
 (3) $(g_1^{(H_1)}, g_2^{(0)}) \in \Gamma_1^{(H_1, 0)}$,  and consequently
(4)  $(g_1^{(H_1)}, g_2^{(H_2)}) \in \Gamma_1^{(H_1, H_2)}$.
As a consequence we obtain:
\begin{lemma}\label{thetrivialresult123}
\begin{itemize}
\item[(1)] $\Lambda_{\Gamma}= \Lambda_{\Gamma_1}$.
\item[(2)] $\iota(\Gamma_1) \cap \iota\big(\Ua(W_1, \langle, \rangle_1) \times \Ua(W_2, \langle,\rangle_2)\big)=1$.
\end{itemize}
 \end{lemma}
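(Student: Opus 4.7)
The plan is to exploit the four-block structure defining $\Gamma_1$ and reduce both statements to the corresponding facts about the four ingredient pairs $(\Gamma^{(\ast,\ast\ast)}, \Gamma_1^{(\ast,\ast\ast)})$ already settled in Sections \ref{TheproofofthemaintheoremI}--\ref{TheproofofthemaintheoremV}. In each of those sections one has simultaneously $\lambda(\Gamma_1^{(\ast,\ast\ast)}) = F^\times$ and $\Gamma_1^{(\ast,\ast\ast)} \cap (\U \times \U) = \{(1,1),(-1,-1)\}$, and the generator lists always contain $(a,\dots;a^{-1})$ with $a$ ranging over $F^\times$. These are precisely the ingredients needed here.

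For part (1), I would construct a diagonal embedding $\Gamma_1^{(0,0)} \hookrightarrow \Gamma_1$ by placing the identity on every hyperbolic summand; this is well-defined because $(1,1)$ lies in each of $\Gamma_1^{(H_1,0)}$, $\Gamma_1^{(0,H_2)}$ and $\Gamma_1^{(H_1,H_2)}$, so the four compatibility conditions of the construction are satisfied. Since the earlier sections establish $\lambda(\Gamma_1^{(0,0)}) = F^{\times} = \Lambda_{\Gamma}$, composing with $\lambda : \Gamma_1 \to \Lambda_{\Gamma}$ already exhausts $F^\times$, and therefore $\Lambda_{\Gamma_1} = \Lambda_{\Gamma}$.

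For part (2), take $\gamma_1 = [(g_1^{(0)}, g_1^{(H_1)}), (g_2^{(0)}, g_2^{(H_2)})] \in \Gamma_1$ with $\iota(\gamma_1) \in \iota(\U(W_1) \times \U(W_2))$. Because $\ker \iota = A_2 \subseteq \U(W_1) \times \U(W_2) \subseteq \Gamma_1$, after multiplying by an element of $A_2$ I may assume $\gamma_1 \in \Gamma_1 \cap (\U(W_1) \times \U(W_2))$. Then $\lambda_\nu(g_\nu^{(\ast)}) = 1$ on every summand, so each of the four projections $(g_1^{(\ast)}, g_2^{(\ast\ast)})$ lands in $\Gamma_1^{(\ast,\ast\ast)} \cap (\U \times \U) = \{(1,1),(-1,-1)\}$ by the cited results. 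The four pairings together force $g_1^{(0)} = g_1^{(H_1)} = g_2^{(0)} = g_2^{(H_2)} \in \{\pm 1\}$ (a single global sign), so $\gamma_1 \in A_2$ and $\iota(\gamma_1) = 1$.

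The only delicate point is the bookkeeping that shows the four pairing conditions defining $\Gamma_1$ are mutually consistent and really do descend compatibly to the global signs; but this is immediate from the explicit generator lists in each of Sections \ref{TheproofofthemaintheoremI}--\ref{TheproofofthemaintheoremV}, which are assembled precisely so that the diagonal $A_2$ and the characters $a \mapsto (a,\dots;a^{-1})$ are shared across all four quadrants. With this observation in hand, both claims of the lemma become a routine quadrant-by-quadrant reduction.
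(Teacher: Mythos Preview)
Your argument for part (2) is correct and matches what the paper implicitly relies on (the paper states the lemma with no proof, as an immediate consequence of the construction).

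There is, however, a genuine gap in your argument for part (1). You propose to embed $\Gamma_1^{(0,0)}$ into $\Gamma_1$ by ``placing the identity on every hyperbolic summand''. This does not produce an element of $\Gamma$ at all: if $(g_1^{(0)}, g_2^{(0)}) \in \Gamma_1^{(0,0)}$ has $\lambda_1(g_1^{(0)}) = a \neq 1$, then $(g_1^{(0)}, 1)$ is \emph{not} an element of $\GU(W_1)$, because a block-diagonal element of $\GU(W_1^0 \oplus H_1)$ must have the same similitude factor on both summands. Your justification ``$(1,1)$ lies in each of $\Gamma_1^{(H_1,0)}$, $\Gamma_1^{(0,H_2)}$, $\Gamma_1^{(H_1,H_2)}$'' checks the wrong thing: the compatibility condition (2) in the definition of $\Gamma_1$ requires $(g_1^{(0)}, g_2^{(H_2)}) \in \Gamma_1^{(0,H_2)}$, not $(1,1) \in \Gamma_1^{(0,H_2)}$; with $g_2^{(H_2)} = 1$ this forces $\lambda_1(g_1^{(0)}) = 1$.

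The fix is easy and is exactly what the reference to Proposition~\ref{thespittingcases} is meant to suggest. The group $\Gamma_1^{(0,H_2)}$ from Section~\ref{TheproofofthemaintheoremI} consists of all pairs $(g_1, s^{\pm}(\lambda_1(g_1)^{-1}))$ with $g_1$ ranging over the full similitude group $\GU(W_1^0)$, and symmetrically for $\Gamma_1^{(H_1,0)}$. So given $(g_1^{(0)}, g_2^{(0)}) \in \Gamma_1^{(0,0)}$ with similitude $a$, take $g_1^{(H_1)} = s^{\pm}(a)$ and $g_2^{(H_2)} = s^{\pm}(a^{-1})$; then all four defining conditions of $\Gamma_1$ are met and $\lambda$ of the resulting element is $a$. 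With this correction your strategy goes through.
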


 \begin{lemma}\label{thetrivialresult133}
Under the restriction
$\Ha^{2}(\iota(\Gamma), \mu_8) \longrightarrow \Ha^2(\iota(\Gamma_1), \mu_8),$
 the image of $[c]$ is  trivial.
 \end{lemma}

\subsection{}Suppose now $m_1m_2 \neq 0$.  By Lemmas \ref{canonicalsurjectivemap}, \ref{Horth}, for each $\nu=1, 2$, there exists  a surjective composite map
$$\frac{D^{\times}}{[D^{\times}, D^{\times}]} \twoheadrightarrow \frac{\Ua(H, \langle, \rangle_{\nu})}{[ \Ua(H, \langle, \rangle_{\nu}), \Ua(H, \langle, \rangle_{\nu})]} \twoheadrightarrow \frac{\Ua(W_{\nu}, \langle, \rangle_{\nu})}{[\Ua(W_{\nu}, \langle, \rangle_{\nu}),  \Ua(W_{\nu}, \langle, \rangle_{\nu})]}.$$
Namely we choose $\Omega_{\nu}=D^{\times}$.   Let $Y_{\nu}= Y_{\nu}^{(0)}\oplus Y_{\nu}^{(H_{\nu})}$ be  a Lagrangian subspace of  $\big(W_1 \otimes W_2, \langle, \rangle_1 \otimes \tau\big( \langle, \rangle_2\big)\big)$ consisting of an arbitrary Lagrangian subspace $Y_{\nu}^{(0)}$ of $W_1^0 \otimes_DW_2^0$, and $Y_{\nu}^{(H_{\nu})}$ as defined in Proposition \ref{thespittingcases} in each  case. By Proposition \ref{thespittingcases} we obtain
\begin{lemma}\label{thelastcasep12}
\begin{itemize}
\item[(1)] $\iota(\Omega_{\nu}) \subseteq P(Y_{\nu})$.
 \item[(2)]  $\iota(\Omega_{\nu})^{\iota(\Gamma_1)} \subseteq P(Y_{\nu})$.
 \end{itemize}
\end{lemma}
By Lemma \ref{onesubgroup}, Theorem A holds in this  case.

\subsection{}Without loss of generality, suppose now $m_1\neq 0$ and $m_2=0$. In this case, we let    $\Omega_{1}= D^{\times}$,  and $Y_1$ be the Lagrangian subspace of $W_1 \otimes W_2$ as defined before. Nevertheless, $W_2=W_2^0$ is an anisotropic $\epsilon$-hermitian space over $D$. We follow the discussion in Sections \ref{TheproofofthemaintheoremI}, \ref{TheproofofthemaintheoremII}, \ref{TheproofofthemaintheoremIII}, \ref{TheproofofthemaintheoremIV}, \ref{TheproofofthemaintheoremV}, and  define  the distinct set $\Omega_2$ in each case.  With the benefit, we obtain the same result as Lemma \ref{threeres}. We can  define a new subgroup $\mathcal{T}_2$ of $\Sp(W_1 \otimes W_2)$ as generated by $\iota(\Gamma_1)$ and $\iota(\Omega_2)$. Then similarly as Section \ref{TheproofofthemaintheoremV}, it can be shown that either $\iota(\Omega_2)^{\iota(\Gamma_1)} \subseteq P(Y)$ for certain Lagrangian subspace of  $\big(W_1 \otimes W_2, \langle, \rangle_1 \otimes \tau\big( \langle, \rangle_2\big)\big)$, or   the restriction of $[c]$ to $\mathcal{T}_2$ is trivial.  Without a doubt, Theorem A holds in this case.

\labelwidth=4em
\addtolength\leftskip{25pt}
\setlength\labelsep{0pt}
\addtolength\parskip{\smallskipamount}


\begin{thebibliography}{99}
%\bibitem{AM} T. Austin and C. C. Moore,
%{\it Continuity properties of measurable group cohomology}, Math. Ann. 356  (2013),  885-937.



%\bibitem{DeBr} J.-L. Brylinski and P. Deligne,
%{\it Central extensions of reductive groups by $K_2$},
% Publ. Math. Inst. Hautes \'Etudes Sci. 94 (2001),  5-85.


\bibitem{BushH} C. J. Bushnell and G. Henniart,
 {\it The local langlands conjecture for $GL(2)$},
 Grundlehren Math. Wiss. 335, Springer-Verlag, Berlin, 2006.



\bibitem{Di} J. Dieudonn\'e,
{\it La g\'eom\'etrie des groupes classiques}, Ergeb. Math. Grenzgeb. (Neue Folge, Heft 5), vol. 5, Springer-Verlag, Berlin,  1955.



\bibitem{Gan} W.T. Gan  and  W. Tantono,
{\it The local Langlands conjecture for $\GSp(4)$ II:
The case of inner forms}, Amer. J. Math. 136 (2014), 761-805.


\bibitem{HOM}  A. J. Hahn and O. T. O'Meara,
{\it The classical groups and $K$-theory}, Grundlehren Math. Wiss. 291, Springer-Verlag,  1989.



\bibitem{HanM} M. Hanzer and G. Mui\'c,
{\it Parabolic induction and Jacquet functors for metaplectic groups},
  J. Algebra 323 (2010),   241-260.




\bibitem{HS} G. P. Hochschild and J.-P. Serre,
{\it Cohomology of group extensions},
 Trans. Amer. Math. Soc. 74 (1953), 110-134.




\bibitem{Kud3} S. Kudla,
 {\it   Splitting metaplectic covers of dual reductive pairs},
Israel J. Math. 87 (1994),  361-401.


\bibitem{Kud2} S. Kudla,
 {\it  Notes on the local theta correspondence (lectures at the European School in Group Theory)}, preprint, available at http://www.math.utotonto.ca/~skudla/castle.pdf, 1996.



\bibitem{Lam} T. Y. Lam,
{\it  Introduction to quadratic forms over fields},
Grad. Stud. Math. 67, Amer. Math. Soc., Providence, RI, 2005.


\bibitem{MVW} C. Moeglin, M.-F. Vign\'eras, and J.-L. Waldspurger,
{\it Correspondances de Howe sur un corps $p$-adique},
Lecture Notes in Math. 1291, Springer-Verlag, Berlin, 1987.


\bibitem{Mo2} C. C. Moore,
 {\it Extensions and low dimensional cohomology theory of locally compact groups I},
 Trans. Amer. Math. Soc. 113 (1964), 40-63.


\bibitem{Mo} C. C. Moore,
 {\it Group extensions of $p$-adic and adelic linear groups},
Publ. Math. Inst. Hautes \'Etudes Sci. 35 (1968), 56-70.


\bibitem{Shuy}  S.-Y. Pan,
{\it   Splittings of the metaplectic covers of some reductive dual pairs},
Pacific J. Math. 199 (2001),  163-226.



\bibitem{Per} P. Perrin,
{\it Repr\'esentations de Schr\" odinger. Indice de Maslov et groupe metaplectique},
in Non Commutative Harmonic Analysis and Lie Groups, Proc.( Marseille-Luming 1980), Lecture Notes in Math. 880,
 Springer-Verlag, Berlin, 1981.


%\bibitem{PrRa1} G. Prasad and M. S. Raghunathan,
%{\it  Topological central extensions of semisimple groups over local
%fields},  Ann. of Math. 119 (1984), 143-268.


\bibitem{PrRa2} G. Prasad and M. S. Raghunathan,
{\it  Topological central extensions of $\SL_1(D)$},   Invent. Math. 92 (1988), 645-689.




\bibitem{Rao} R. R. Rao,
{\it On some explicit formulas in the theory of Weil representation},
 Pacific J. Math.  157 (1993), 335-371.

\bibitem{Ro} B. Roberts,
{\it The theta correspondence for similitudes}, Israel J. Math. 94 (1996), 285-317.


\bibitem{Ri} C. Riehm,
{\it The norm one group of a $p$-adic division algebra},
Amer. J. Math. 92  (1970), 499-523.


\bibitem{Sa} I. Satake,
{\it Some remarks to the preceding paper of Tsukamoto},
 J. Math. Soc. Japan 13 (1961), 401-409.




\bibitem{Scha} W. Scharlau,
{\it Quadratic and Hermitiens forms}, Grundlehren Math. Wiss. 270, Springer-Verlag, Berlin, 1985.




\bibitem{Tsuk} T. Tsukamoto,
{\it On the local theory of quaternionic anti-hermitian forms},
   J. Math. Soc. Japan 13 (1961), 387-400.





\end{thebibliography}
\end{document}